\newcommand{\ci}[1]{\mathscr{#1}}
\newcommand{\g}[1]{\mathfrak{#1}}
\renewcommand{\ni}{\nu}
\newcommand{\alfa}{\alpha}
\newcommand{\R}{\mathbf{R}}
\newcommand{\C}{\mathbf{C}}
\newcommand{\de}{\partial}
\renewcommand{\H}{\mathbf{H}}
\newcommand{\N}[1]{\left\lVert#1\right\rVert}
\newcommand{\e}{\varepsilon}
\newcommand{\bra}{\left\langle}
\newcommand{\ket}{\right\rangle}
\renewcommand{\phi}{\varphi}
\newcommand{\con}[1]{\overline{#1}}
\newcommand{\mi}{\mu}
\newcommand{\cerchio}[1]{\accentset{\smash{\raisebox{-0.12ex}{$\scriptstyle\circ$}}}{#1}\rule{0pt}{2.3ex}}
\DeclareMathOperator{\Vol}{Vol}
\DeclareMathOperator{\ind}{ind}
\DeclareMathOperator{\Crit}{Crit}
\newtheorem{proposizione}{Proposition}[section]
\newtheorem{teorema}[proposizione]{Theorem}
\newtheorem{lemma}[proposizione]{Lemma}
\newtheorem{corollario}[proposizione]{Corollary}
\theoremstyle{definition}
\newtheorem{definizione}[proposizione]{Definition}
\theoremstyle{remark}
\title{Blow-up analysis and degree theory for the Webster curvature prescription problem in three dimensions}
\author{Claudio Afeltra
}
\date{}
\begin{document}

\maketitle

\begin{abstract}
 Given a strictly pseudoconvex CR manifold $M$ of dimension three and positive CR Yamabe class, and a positive smooth function $K:M\to\R$ verifying some mild and generic hypotheses, we prove the compactness of the set of solutions of the Webster curvature prescription problem associated to $K$, and we compute the Leray-Schauder degree in terms of the critical points of $K$. As a corollary, we get an existence result which generalizes the ones existent in the literature.
\end{abstract}

\section{Introduction}
Given a Riemannian manifold $(M,g)$ and a smooth function $K:M\to\R$, the problem of the existence (or non-existence) of a metric conformally equivalent to $g$ and whose scalar curvature is $K$ is of utmost importance in conformal geometry, and has been subject of a considerable number of studies.
The case of constant $K$, known as Yamabe problem, is the fundamental one and has been solved positively; in general there seems to be no complete classification of the functions $K$ which can be realised as scalar curvature by a conformal change, and various sufficient conditions and obstructions have been found (see, for example, Chapter 6 of \cite{Au}).

Since in CR geometry there exists a scalar invariant known as Webster curvature which shares many properties with the scalar curvature of Riemannian geometry, it is natural to study the problem of curvature prescription for Webster curvature.
Fixing a contact form $\theta$ then the condition that the form $\widetilde{\theta}=u^{\frac{2}{n}}\theta$ (with $u>0$) has Webster curvature $K$ is equivalent to the equation
\begin{equation}\label{EquazioneIntroduzione}
 L_{\theta}u = Ku^{\frac{n+2}{n}}
\end{equation}
where $L_{\theta}=-b_n\Delta_b+R$ with $b_n=2+\frac{2}{n}$ is the conformal sublaplacian (see Section \ref{SezioneNotazioni} for detailed definitions).

In order to solve this problem various techniques have been applied, for example variational methods (see for example \cite{G}), perturbation techniques (see \cite{MU}), geometric flows (see \cite{H}) and symmetry methods.

A widely used method in conformal geometry is to consider a sequence of solutions tending to infinity in a neighborhood of a point and rescaling it in normal coordinates in order to get a limit metric. This limit must be a conformal metric of constant scalar curvature in $\R^n$, which were completely classified by Caffarelli, Gidas and Spruck in \cite{CGS}, and turn out to be, up to translations and dilations, the spherical metric transferred to $\R^n$ through the stereographic projection. This method allows to understand how the sequence of solutions blows up, and is a fundamental step in many existence results for prescribed scalar curvature, as well as other problems.

An obstacle to do the same in CR geometry is that performing the same procedure one gets a contact form of constant Webster curvature on the Heisenberg group $\H^n$, and such forms have not yet completely classified.
The proof of Caffarelli, Gidas and Spruck in $\R^n$ cannot be adapted to $\H^n$ because it uses the method of moving planes, which requires reflecting solutions across a hyperplane, but $\H^n$ does not have CR automorphisms analogous to Euclidean reflections.

Jerison and Lee in \cite{JL2} proved a classification result valid only under the hypothesis of finite volume. Their result shows that under such hypothesis the solutions are, up to Heisenberg translations and dilations, the standard contact form of $S^{2n+1}$. This has allowed to employ in CR geometry blow up methods in situations where the hypothesis of finite volume can be proved, for example methods based on the change of topology of sublevels as in \cite{G}.

Recently Catino, Li, Monticelli and Roncoroni in \cite{CLMR} proved the classification theorem for $\H^1$. This allows to perform the blow up analysis in CR manifolds of dimension three (as, for example, has been made by the author of this article in \cite{Af}).

In this article we will employ blow up analysis for the problem of prescription of Webster curvature.
We will study the case in which $K>0$; by integrating Equation \eqref{EquazioneIntroduzione} it can easily be proved that this implies that the CR manifold has positive CR Yamabe class (see Section \ref{SezioneNotazioni} for the definition).
Under this hypothesis, it is known that the CR sublaplacian has a Green function $G_p$ at every point.
Furthermore, as will be explained in Section \ref{SezioneNotazioni}, there exists a contact form around $p$ which allows to define so-called CR normal coordinates, and the Green function with respect to such form in these coordinates has a principal regular term denoted by $A_p$.

Given a finite subset $S=\{\overline{x}^1,\ldots,\overline{x}^N\}$ let us define the matrix $M(S)$ by
\begin{equation}\label{DefinizioneMatrice}
 \begin{array}{l}
  M_{jj} = -\frac{\Delta_bK(\overline{x}^j)}{K(\overline{x}^j)^2} - 32\frac{A_{\overline{x}^j}}{K(\overline{x}^j)};\\
  M_{jk} = -\frac{32G_{\overline{x}^k}(\overline{x}^j)}{K(\overline{x}^k)^{1/2}K(\overline{x}^j)^{1/2}}
 \end{array}
\end{equation}
with respect to a contact form realising CR normal coordinates around each point of $S$ (which exists because $S$ is discrete).

We point out that since we are studying the problem of prescribed Webster curvature, performing a preliminary change of contact form does not affect the problem.

We will see that if $S$ is a set of critical point of $K$ then the matrix $M(S)$ governs the behavior of solution blowing up at each point of $S$, and in particular it describes the mutual interactions among the bubbles which form at the points of $S$. This behavior is typical of dimension three in CR geometry and of dimension four in Riemannian geometry: while in higher dimension the interaction among bubbles is negligible with respect to the interaction of the bubbles with $K$ and the manifold, and in lower dimension the opposite is true, in these dimensions the two interactions have the same order.
The relation between these two situations is connected to the fact that the homogeneous dimension of $2n+1$-dimensional CR manifolds is $2n+2$.
Indeed matrices similar to the ones defined in formula \eqref{DefinizioneMatrice} appear in the scalar curvature prescription problem in dimension four (see for example \cite{L} and \cite{BCCH}).

In our results we will need to require the mild and generic conditions that, calling $\Crit(K)$ the set of critical points of $K$, in CR normal coordinates
\begin{equation}\label{Condizione1}
 -\frac{\Delta_bK(\overline{x})}{K(\overline{x})} -32A_{\overline{x}}\ne 0 \;\;\;\;\;\;\forall\overline{x}\in\Crit(K)
\end{equation}
and furthermore, denoting by $\mi(M)$ the least eigenvalue of the matrix $M$, that, yet again in CR normal coordinates,
\begin{equation}\label{Condizione2}
 \mi(M(S))\ne 0 \;\;\;\;\;\;\forall S\subset\Crit(K)\;\;\text{such that}\;\;-\frac{\Delta_bK(\overline{x})}{K(\overline{x})} -32A_{\overline{x}}>0\;\;\;\;\forall\overline{x}\in S.
\end{equation}

\begin{teorema}\label{Teorema1}
 If $M$ is a three dimensional strictly pseudoconvex CR manifold of positive CR Yamabe class, $K>0$ is smooth, has isolated critical points,
 and verifies conditions \eqref{Condizione1} and \eqref{Condizione2},
 then the set
 $$\ci{M}_K =\left\{ u>0 \;\middle|\; L_{\theta}u = Ku^3 \right\}$$
 is compact in the $C^k$ topology for every $k$.
 Furthermore there exists $C>0$ such that $u>C$ for every $u\in\ci{M}_K$.
\end{teorema}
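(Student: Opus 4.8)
The plan is to argue by contradiction via a blow-up analysis. Suppose $\ci{M}_K$ is not compact in $C^k$; since elliptic regularity (subelliptic estimates for $L_\theta$) upgrades $C^2$ bounds to $C^k$ bounds for all $k$, and the equation is subcritical-borderline, non-compactness must come from a sequence $u_i \in \ci{M}_K$ with $\max_M u_i \to \infty$. The first step is to extract the blow-up points: I would look at the set of points where $u_i$ concentrates, i.e. points $\overline{x}^j$ near which $\sup u_i \to \infty$. Using the classification result of Catino--Li--Monticelli--Roncoroni for $\H^1$ (valid now in dimension three), together with a standard argument showing the blow-ups have finite volume, one rescales $u_i$ in CR normal coordinates around each concentration point and obtains in the limit a standard bubble, i.e. the Jerison--Lee extremal on $\H^1$. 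The key structural output of this step is that the blow-up set is \emph{finite}, say $S = \{\overline{x}^1,\dots,\overline{x}^N\}$, and that away from $S$ the sequence $u_i$ converges (after suitable normalization) to a sum of Green's functions $\sum_j m_j G_{\overline{x}^j}$ for some masses $m_j>0$.

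The second step is to localize the blow-up points. I would show each $\overline{x}^j$ must be a critical point of $K$: this is the usual Pohozaev/Kazdan--Warner type obstruction, obtained by testing the equation against the generator of CR dilations (or against $\nabla K$) on a small ball around $\overline{x}^j$ and analyzing the leading-order term of the resulting identity. The gradient term $\nabla K(\overline{x}^j)$ appears at leading order, forcing $\overline{x}^j \in \Crit(K)$. Because $K$ has isolated critical points by hypothesis, $S \subset \Crit(K)$ is automatically finite, consistent with the first step.

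The third and most delicate step is to derive the \emph{balancing conditions} that the limiting configuration must satisfy, and to show they are incompatible with hypotheses \eqref{Condizione1} and \eqref{Condizione2}. Here one must expand the Pohozaev identity to the next order, keeping both the self-interaction of each bubble with the geometry — which produces the term $-\frac{\Delta_b K(\overline{x}^j)}{K(\overline{x}^j)} - 32 A_{\overline{x}^j}$ through the regular part $A_{\overline{x}^j}$ of the Green function and the Laplacian of $K$ — and the mutual interaction of distinct bubbles, which produces the off-diagonal Green's-function terms $G_{\overline{x}^k}(\overline{x}^j)$. Collecting these, the necessary condition for a blow-up at $S$ to exist is precisely that the mass vector lies in the kernel of the matrix $M(S)$, or more precisely that $M(S)$ is degenerate / has a non-trivial compatible eigenvector. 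The homogeneous dimension $2n+2 = 4$ in dimension three is exactly what makes the self-interaction and mutual-interaction terms share the same order, so both survive in the limit and combine into $M(S)$. The masses $m_j$ being positive forces the relevant eigenvalue to be the least eigenvalue $\mi(M(S))$, and the sign condition on the diagonal entries selects the subset $S$ over which $-\frac{\Delta_b K}{K} - 32A > 0$; condition \eqref{Condizione2} that $\mi(M(S)) \ne 0$ for all such $S$ then rules out the existence of the limiting configuration, and condition \eqref{Condizione1} rules out the single-point case $N=1$. This contradiction establishes compactness.

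The hard part is unquestionably the third step: one must carry the Pohozaev expansion to sufficient precision to capture the $A_{\overline{x}^j}$ and $G_{\overline{x}^k}(\overline{x}^j)$ terms with the correct numerical constant ($32$), control all error terms uniformly as $i\to\infty$, and verify that the interaction integrals genuinely reproduce the entries of $M(S)$ — this requires precise asymptotics of the CR normal coordinate expansion and of the bubble profile near and far from each concentration point. The uniform lower bound $u > C$ then follows from the same analysis: since blow-up (large values) is excluded, the Harnack inequality for $L_\theta$ together with the positivity of the CR Yamabe class and $K>0$ prevents $u$ from degenerating to zero anywhere, giving a uniform positive lower bound on $\ci{M}_K$.
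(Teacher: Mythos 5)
Your proposal follows essentially the same route as the paper: reduction to an $L^\infty$ bound by subelliptic regularity, blow-up analysis yielding a finite set of isolated simple blow-up points located at critical points of $K$ (via the Kazdan--Warner/Pohozaev obstruction), a second-order Pohozaev expansion producing the balance condition $M(S)\lambda=0$ for the positive mass vector, and the least-eigenvalue/nonnegative-eigenvector argument forcing $\mi(M(S))=0$, contradicting conditions \eqref{Condizione1} and \eqref{Condizione2}. This matches the paper's proof, which assembles exactly these ingredients (Theorem \ref{TeoremaBlowUpIsolati}, Propositions \ref{GradienteNullo} and \ref{PropMatriceDefinita}, and Lemma \ref{LemmaAlgebraLineare}) into the same contradiction.
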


The proof of the above theorem is inspired by a similar result for four dimensional Riemannian manifolds by Li (see \cite{L}), and relies on many results about blow up from \cite{Af}.

Next we will prove a formula for the
total degree of the solution set.
We recall that the Morse index $\ind(K,\overline{x})$ of $K$ at $\overline{x}\in\Crit(K)$ is defined as the negative index of inertia of the Hessian of $K$ at $\overline{x}$ with respect to any Riemannian metric.
Furthermore we recall that for manifold with positive CR Yamabe class the conformal sublaplacian $L_{\theta}$ is invertible.

\begin{teorema}\label{Teorema2}
 In the hypotheses of Theorem \ref{Teorema1} let
 $$F(u) = u-L_{\theta}^{-1}(Ku^3)$$
 and let $C$ be such that $\ci{M}_K\subset\Omega_C$ where
 $$\Omega_C = \left\{ u\in \Gamma^{2,\alfa}(M) \;|\; \N{u}_{\Gamma^{2,\alfa}}<C, u>\frac{1}{C}\right\}$$
 (such $C$ exists thanks to Theorem \ref{Teorema1}).
 Then
 $$\deg(F,\Omega_C,0) = -1-\sum_{S\subseteq\Crit(K)\,:\,\mi(M(S))>0}(-1)^{\sum_{\overline{x}\in S}\ind(K,\overline{x})}.$$
\end{teorema}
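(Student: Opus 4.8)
The plan is to compute the degree by comparing the critical problem with its subcritical approximations, where the blow-up configurations encoded by the matrix $M(S)$ appear as genuine, isolated solutions. For $\e>0$ small consider
\[
 F_\e(u) = u - L_\theta^{-1}(Ku^{3-\e}),
\]
again a compact perturbation of the identity, so that $\deg(F_\e,\Omega_{C'},0)$ is well defined once $C'$ is large and no solution lies on $\partial\Omega_{C'}$. The first ingredient is a \emph{subcritical normalization}: for $\e>0$ the nonlinearity is subcritical, hence the positive solution set is compact, and one may deform within subcritical exponents (and $K$ to a constant) keeping compactness, reaching a Yamabe-type problem whose positive energy is of mountain-pass type. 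The associated total degree is then $-1$, so $\deg(F_\e,\Omega_{C'},0)=-1$ for all small $\e$. This is the origin of the leading term.

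Second, I would apply the blow-up analysis of \cite{Af} to the family $\{F_\e=0\}_{\e\to0^+}$. Each sequence of solutions either stays bounded in $\Gamma^{2,\alfa}$, hence subconverges to an element of $\ci{M}_K$, or blows up, in which case the concentration points are critical points of $K$ and the concentration rates are governed at leading order by $M(S)$, where $S\subseteq\Crit(K)$ is the blow-up set; conditions \eqref{Condizione1} and \eqref{Condizione2} make every limiting configuration nondegenerate and isolated. By additivity and excision, for small fixed $\e$,
\[
 \deg(F_\e,\Omega_{C'},0) = \deg(F_\e, U_0, 0) + \sum_{\emptyset\ne S\,:\,\mi(M(S))>0} \deg(F_\e, U_S, 0),
\]
where $U_0$ is a fixed neighborhood of the compact set $\ci{M}_K$ and the $U_S$ are disjoint neighborhoods of the bubbling solutions attached to $S$. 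Since on $U_0$ the maps $F_\e$ converge to $F$ with no solution crossing $\partial U_0$, the straight-line homotopy gives $\deg(F_\e,U_0,0)=\deg(F,\Omega_C,0)$.

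Third, and this is the technical heart, I would compute each $\deg(F_\e,U_S,0)$ by a Lyapunov--Schmidt reduction. Gluing at the points of $S$ the standard $\H^1$ bubbles (available through the classification used in \cite{Af}) reduces the problem to a finite-dimensional one in the locations $\xi_j$, near the critical points, and the concentration rates $\lambda_j$. The reduced energy expands as the sum of the local contribution of $K$ at each $\xi_j$ plus an interaction term in the variables $\lambda_j^{-2}$ whose matrix is exactly $M(S)$; balancing the subcritical term against this interaction yields the system $M(S)w=(\text{positive vector})$ for $w_j\sim\lambda_j^{-2}$. Because the off-diagonal entries of $M(S)$ are negative (the Green function is positive), $M(S)$ is positive definite if and only if it is a nonsingular M-matrix, if and only if this system admits a solution with all $\lambda_j$ large and positive; thus a bubbling solution exists precisely when $\mi(M(S))>0$, and none when $\mi(M(S))<0$. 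Conditions \eqref{Condizione1}--\eqref{Condizione2} make the resulting reduced critical point nondegenerate; counting its negative directions, the location block contributes $\sum_{\overline{x}\in S}\ind(K,\overline{x})$ of them, while the concentration block governed by $M(S)$ contributes an even number in the positive-definite case, so that $\deg(F_\e,U_S,0)=(-1)^{\sum_{\overline{x}\in S}\ind(K,\overline{x})}$.

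Substituting into the excision identity and using $\deg(F_\e,\Omega_{C'},0)=-1$ gives
\[
 -1 = \deg(F,\Omega_C,0) + \sum_{\emptyset\ne S\,:\,\mi(M(S))>0}(-1)^{\sum_{\overline{x}\in S}\ind(K,\overline{x})},
\]
which is the asserted formula. The main obstacle is the third step: making the reduction rigorous in the CR setting, that is, controlling the glued approximate solutions in suitable weighted norms, showing that the reduced functional reproduces $M(S)$ at the correct order, and proving that the sign of $\mi(M(S))$ decides both existence and the parity of the concentration-block index. This rests entirely on the fine blow-up estimates of \cite{Af} and on the genericity conditions \eqref{Condizione1}--\eqref{Condizione2}, which exclude degenerate and higher-multiplicity configurations.
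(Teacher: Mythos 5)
Your overall architecture is exactly the paper's: approximate by the subcritical problem $F_{3-\tau}$, show its total degree is $-1$, use the blow-up dichotomy plus excision to split the zero set into a bounded part and neighborhoods of bubbling configurations indexed by the sets $S$ with $\mi(M(S))>0$, compute the degree of each such neighborhood as $(-1)^{\sum\ind}$ by finite-dimensional reduction, and conclude by homotopy invariance. Two of your individual steps, however, contain genuine gaps. First, the claim that the subcritical model problem has total degree $-1$ ``because its energy is of mountain-pass type'' is not an argument: mountain-pass structure at best controls the local degree at a single critical point, whereas what is needed is the \emph{total} Leray--Schauder degree on $\Omega_{C'}$, and for constant $K$ and fixed subcritical $p$ there is no uniqueness or nondegeneracy of positive solutions to invoke. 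The paper's device (Section \ref{SezioneGrado}) is to continue the homotopy in the exponent all the way down to $p=1$: since solutions of $L_\theta u=u^p$ degenerate as $p\to 1$, one first replaces $S_p$ by the normalized operator $\widetilde{S}_p(u)=u-L_\theta^{-1}(f(u)u^p)$ with $f(u)=\int uL_\theta u$, whose zero set stays away from $\de\Omega_C$ uniformly for $p\in[1,3-\e]$ (Lemma \ref{ZeriSulBordo}); at $p=1$ the only zero is the normalized first eigenfunction $\psi_1$, and the degree is computed spectrally to be $-1$ because the linearization there has exactly one negative eigenvalue, namely $-2$ in the direction of $\psi_1$ (Lemma \ref{GradoLineare}, Proposition \ref{ProposizioneGradoSottocritico}). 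Some substitute for this homotopy-to-linear argument is indispensable; without it the leading $-1$ is unsupported.

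Second, your Morse-index bookkeeping at a bubbling configuration is incorrect as stated and lands on the right answer only through two compensating errors. In the natural parametrization by amplitudes $a_k$, locations $p_k$, rates $\lambda_k$ and the orthogonal component $v$ (the paper's $V'_\tau(S)$ and Proposition \ref{GradoPuntiCriticiInfinito}), the amplitude directions are precisely the mountain-pass directions: each contributes a factor $-1$ (formula \eqref{GradoDimostrazione1}), giving $(-1)^N$ in total, and you omit this block entirely. Moreover the reduced functional \emph{decreases} in $K$ (the derivative in $p_k$ is $\approx -C\,X_xK(p_k)$), so the location block contributes $\deg(-\nabla_{g_\theta}K)=(-1)^{3-\ind(K,\overline{x}^k)}$ per point, i.e. $(-1)^{3N-\sum\ind}$ in total --- not $(-1)^{\sum\ind}$ as you assert. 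The rate block (strictly convex reduced problem; your M-matrix observation is a legitimate alternative way to see that the rate equation has a unique positive solution) and the $v$-block each contribute $+1$, so the correct product is $(-1)^N(-1)^{3N-\sum\ind}=(-1)^{\sum\ind}$, in agreement with the paper. Your two misattributions cancel modulo $2$, but the count as written would not survive being made rigorous, and getting these signs right is exactly the content of the degree computation in Proposition \ref{GradoPuntiCriticiInfinito}.
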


As a corollary, thanks to the properties of the Leray-Schauder degree, we get an existence result which generalizes an analogous one from Gamara (\cite{G}) valid for spherical CR manifolds (that is, locally CR equivalent to $\H^1$).

\begin{corollario}
 In the hypotheses of Theorem \ref{Teorema1} if
 $$-1-\sum_{S\subseteq\Crit(K)\,:\,\mi(M(S))>0}(-1)^{\sum_{\overline{x}\in S}\ind(K,\overline{x})}\ne 0$$
 then there exists a contact form whose associated Webster is equal to $K$.
\end{corollario}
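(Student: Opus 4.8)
The plan is to derive existence directly from the degree formula of Theorem \ref{Teorema2} via the solution property of the Leray--Schauder degree. I would first set up the functional-analytic framework exactly as in Theorem \ref{Teorema2}: take $F(u)=u-L_\theta^{-1}(Ku^3)$ acting on $\Gamma^{2,\alfa}(M)$ and choose $C$ so large that $\ci{M}_K\subset\Omega_C$, which is possible by Theorem \ref{Teorema1} (providing simultaneously the bound $\N{u}_{\Gamma^{2,\alfa}}<C$ and the lower bound $u>1/C$). Since $L_\theta^{-1}$ gains two derivatives and the inclusion of the corresponding Folland--Stein Hölder spaces is compact, the map $u\mapsto L_\theta^{-1}(Ku^3)$ is compact, so $F$ is a compact perturbation of the identity and its Leray--Schauder degree is well defined; the absence of zeros of $F$ on $\partial\Omega_C$ is guaranteed precisely because all solutions lie in the interior, by Theorem \ref{Teorema1}.

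Next I would invoke Theorem \ref{Teorema2}, according to which $\deg(F,\Omega_C,0)$ equals the alternating sum appearing in the statement. By hypothesis this integer is nonzero, so the solution property of the Leray--Schauder degree produces a point $u\in\Omega_C$ with $F(u)=0$, that is $L_\theta u=Ku^3$. By the definition of $\Omega_C$ this $u$ satisfies $u>1/C>0$, hence it is automatically a positive solution, with no sign argument required.

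Finally I would promote this to a genuine geometric statement. Subelliptic regularity for $L_\theta$ applied to $L_\theta u=Ku^3$, together with a bootstrap using $u>0$ and the smoothness of $K$, shows that $u$ is smooth. The contact form $\widetilde\theta=u^{2}\theta$ (the exponent $2/n$ evaluated at $n=1$) then has Webster curvature $K$, by the computation recalled around \eqref{EquazioneIntroduzione} in the introduction, which is the asserted conclusion.

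Because this is a formal corollary of Theorems \ref{Teorema1} and \ref{Teorema2}, there is no real analytic obstacle; the only points demanding attention are verifying that the degree is taken on an open set free of boundary zeros (supplied by the compactness of Theorem \ref{Teorema1}) and checking that the $\Gamma^{2,\alfa}$ solution obtained is positive and smooth, both of which are immediate. No estimate beyond those already used for the two theorems is needed.
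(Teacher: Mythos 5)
Your proposal is correct and coincides with the paper's own (essentially one-line) argument: the corollary is deduced from Theorem \ref{Teorema2} via the solution property of the Leray--Schauder degree, with the compactness of $u\mapsto L_{\theta}^{-1}(Ku^3)$, the positivity $u>1/C$ from the definition of $\Omega_C$, and subelliptic regularity supplying the remaining routine details exactly as you describe. No further comment is needed.
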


Theorem \ref{Teorema2} is proved by subcritical approximation: by defining
$$F_p(u) = u-L_{\theta}^{-1}(Ku^p)$$
in Section \ref{SezioneGrado} we will prove, using an homotopy with between $K$ and a constant function and a homotopy between the functional and a modified functional which has bounded set of zeroes uniformly in $p\in[1,1+\e]$, that $\deg(F_p,\Omega_R,0)=-1$ if $R$ is big enough such that $F_p^{-1}(0)\subset\Omega_R$ by reducing it to a linear problem.
Then in Section \ref{SezioneSoluzioniApprossimate} we will build, for every $S\subseteq\Crit(K)$ such that $M(S)>0$, a family of approximate solutions of the subcritical problem $F_p=0$ blowing up at $S$ as a linear combination of bubbles in local coordinates, and we will prove that there exist $\tau_0>0$ and $C$ such that for $p\in(3-\tau_0,3)$
$$F_p^{-1}(0) \subset \Omega_C\cup\bigcup_{S\subseteq\operatorname{Crit}(K):M(S)>0}U_{\tau}(S)$$
where $U_{\tau}(S)$ is a certain neighborhood of the set of the family of approximate solutions.
Since, by the properties of the Leray-Schauder degree
$$\deg(F_p,\Omega_R,0)= \deg(F_p,\Omega_C,0) + \sum_{S\subseteq\operatorname{Crit}(K):M(S)>0}\deg(F_p,U_{\tau}(S),0)$$
and $\deg(F_p,\Omega_C,0)=\deg(F,\Omega_C,0)$, in order to compute $\deg(F,\Omega_C,0)$ we have to compute $\deg(F_p,U_{\tau}(S),0)$. This will be achieved through various delicate estimates (see \cite{L} and \cite{MM} for similar computations in Riemannian geometry).

\paragraph{Acknowledgements.} The author is supported by the fund “MIUR PRIN 2017” - CUP:E64I19001240001.

\section{Preliminaries and notation}\label{SezioneNotazioni}
Let $M$ be a nondegenerate three dimensional CR manifold, that is, a three dimensional manifold endowed with a one dimensional subbundle $\ci{H}\subset TM\otimes\C$ such that $\ci{H}\cap\overline{\ci{H}}=\{0\}$ and $H(M)=\g{Re}(\ci{H}\oplus\overline{\ci{H}})$ is a contact distribution.
For an introduction to such manifolds we refer to \cite{DT}.

The choice of a contact form $\theta$ for the CR structure determines a rich geometric structure, including a nondegenerate symmetric tensor $G_{\theta}$ on $H(M)$, and a connection known as Tanaka-Webster connection. Contracting the curvature tensor of the connection twice using $G_{\theta}$, analogously to Riemannian geometry, it can be defined a scalar invariant $R_{\theta}$ known as Webster curvature.
The CR manifold is called strictly pseudoconvex if $G_{\theta}$ is positive definite, and thus a subriemannian metric. $G_{\theta}$ can be extended to a Riemannian metric $g_{\theta}$.

As mentioned in the introduction, since the contact form is determined up to the multiplication by a nowhere zero function, similarly to conformal geometry, given a function $K$ on $M$ it is natural to study whether there exists a contact form such that $R_{\theta}=K$. This is known as Webster curvature prescription problem.
When $K$ is a constant, the problem is known as CR Yamabe problem; analogously to the Riemannian case, the solution of the problem is influenced by the value of the Yamabe constant
$$\ci{Y}(M,\theta)=\inf_{\widetilde{\theta}\in[\theta]}\frac{\int_M R_{\widetilde{\theta}}dV_{\widetilde{\theta}}}{\operatorname{Vol}_{\widetilde{\theta}}(M)^{1/2}}.$$
We denote by $\nabla_b$ the subriemannian gradient, for which the Sobolev inequality
\begin{equation}\label{Sobolev}
 \N{u}_{L^4(M)} \le C \N{\nabla_bu}_{L^2(M)}
\end{equation}
holds if $M$ is compact (see Proposition 5.5 in \cite{JL1}).
We call $\Delta_b=\operatorname{div}\circ\nabla_b$, where the divergence is respect to $g_{\theta}$, the sublaplacian operator. The sublaplacian is not elliptic, but has many properties in common with elliptic operators, such as regularity theorems, maximum principles and the Harnack inequality (see Section 2.2 in \cite{Af} for a synthetic recall and appropriate references).

The most important three-dimensional CR manifold is the Heisenberg group $\H^1=\C\times\R$, which is the Lie group under the law $(z,t)\cdot(w,s) = (z+w,t+s+2\g{Im}(z\con{w}))$, endowed with the CR structure generated by the left invariant vector field
$$Z= \frac{\de}{\de z} +i\overline{z}\frac{\de}{\de t}$$
and the left invariant contact structure $\theta= dt+izd\con{z}-i\con{z}dz$.

On $\H^1$ we will use also the real left invariant vector fields
$$X= \frac{\de}{\de x} +2y\frac{\de}{\de t}, \;\;\; Y= \frac{\de}{\de y} -2x\frac{\de}{\de t}$$
and the right invariant vector field
\begin{equation}\label{CampoInvarianteADestra}
 Z_r= \frac{\de}{\de z} -i\overline{z}\frac{\de}{\de t}
\end{equation}
which verifies $[Z,Z_r]=0$, since left invariant vector fields on a Lie group commute with right invariant ones.
Furthermore we will use the Korányi norm $|(z,t)|=\left(|z|^4+t^2\right)^{\frac{1}{4}}$.

$\H^1$ has a one parameter family of automorphisms called dilations, defined as
$$\delta_{\lambda}(z,t)=(\lambda z,\lambda^2t)$$
for $\lambda>0$, which has many properties in common with dilations in $\R^n$.
The generator of the dilations is the vector field
$$\Xi = \sum_{\alfa=1}^n\left(z_{\alfa}Z_{\alfa}+\con{z}_{\alfa}Z_{\con{\alfa}}\right) +2tT.$$

Fixing a contact form, in the neighborhood of every point canonical coordinates with values in $\H^1$, analogous to the normal coordinates of Riemannian geometry, are defined, known as pseudohermitian normal coordinates (see \cite{JL3}). Furthermore in a neighborhood of every point there exists a contact form such that the pseudohermitian structure is ``as flat as possible''. The pseudohermitian normal coordinates with respect to this form are called CR normal coordinates (see \cite{JL3}), and are analogous to the conformal normal coordinates introduced by Lee and Parker (see \cite{LP}).
For the properties of CR and pseudohermitian normal coordinates we refer to \cite{JL3}, or to \cite{CMY1} for the case of dimension three.

The conformal sublaplacian is defined as $L_{\theta}=-4\Delta_b+R$.
When the Yamabe class $\ci{Y}(M)$ is positive, $L_{\theta}$ is positive definite, and has a Green function $G_p(x)$.
By Proposition 5.3 in \cite{CMY1}, $G_p(x)$ has the following expansion in CR normal coordinates near $p$:
$$ G_p(x)= \frac{1}{4\pi|x|^2} + A_p + w(x)$$
with $w(x)=o(1)$, where $A_p$ is a constant.

On $\H^1$ the equation $L_{\theta}u = u^3$ has the Jerison-Lee solution
\begin{equation}\label{BollaFormula}
 U=\frac{1}{\left(t^2+(1+|z|^2)^2\right)^{\frac{1}{2}}}.
\end{equation}

\paragraph{Leray-Schauder degree} Let $X$ be a Banach space, $\Omega\subseteq X$ an open bounded subset thereof, and let $T:\overline{\Omega}\to X$ be a (non necessarily linear) compact operator, $S=I-T$, and $x\in X\setminus S(\de\Omega)$.
Then the Leray-Schauder degree $\deg(S,\Omega,x)$ is a generalization of the topological degree for maps between finite dimensional vector spaces, which can be defined through finite dimensional approximation. We refer to Section 3.4 in \cite{AM} for the construction and the main properties of theis topological invariant.

\section{Blow-up analysis}
Let $M$ be a three dimensional pseudoconvex pseudohermitian CR manifold of positive CR Yamabe class, and let $K$ be a positive function of class $C^2$ on $M$.
We recall some definitions about blow-ups of sequences of solutions.
Let $p_i\in(1,3]$ such that $p_i\to 3$ and let $u_i$ be a solution of
\begin{equation}\label{EquazioneBlowUp}
 L_{\theta}u_i = K u_i^{p_i}.
\end{equation}

\begin{definizione}
 \begin{itemize}
  \item A point $\overline{x}\in M$ is called a blow-up point if there exist a sequence $x_i\to\overline{x}$ such that $M_i=u_i(x_i)\to\infty$.
  \item A point $\overline{x}\in M$ is called an isolated blow-up point if there exist $\overline{r}>0$, a constant $C$, and a sequence $x_i\to\overline{x}$ such that $x_i$ is a local maximum of $u_i$, $u_i(x_i)\to\infty$, and
  $$u_i(x)\le Cd(x,x_i)^{-\frac{2}{p_i-1}}$$
  for every $x\in B_{\overline{r}}(x_i)$.
  \item An isolated blow-up point $\overline{x}$ is called an isolated simple blow-up point $\overline{x}$ if there exists $\rho\in(0,\overline{r})$ (independent of $i$) such that, defining
  $$\overline{u}_i(r) = \int_{\de B_1(x_i)}u_i\circ\delta_rd\cerchio{\sigma}$$
  in pseudohermitian normal coordinates, and $\overline{w}_i(r)= r^{\frac{2}{p_i-1}}\overline{u}_i(r)$, $\overline{w}_i$ has exactly one critical point in $(0,\rho)$.
 \end{itemize}
\end{definizione}

The following results were proved in \cite{Af}.

\begin{proposizione}\label{BlowUpIsolatiBolle}
 If $\overline{x}$ is an isolated blow-up point then for any $R_i\to\infty$, $\e_i\to 0$ and $k\in\mathbf{N}$, up to subsequences, in pseudohermitian normal coordinates around $\overline{x}$ it holds that
 $$\N{\frac{1}{M_i}u_i\left(\delta_{M_i^{-\frac{p_i-1}{2}}}(x_i^{-1} \cdot x)\right) -(U\circ\delta_{K(0)^{1/2}/2})(x)}_{\Gamma^{k,\alfa}(B_{R_i}(0))}\le\e_i,$$
 where $U$ is defined in formula \eqref{BollaFormula}, $M_i=u_i(x_i)$, and
 $$\frac{R_i}{\log M_i} \to 0.$$
\end{proposizione}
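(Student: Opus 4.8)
The plan is to run the standard blow-up scheme in the subelliptic setting. Working in pseudohermitian normal coordinates centered at $x_i$, I would set $\lambda_i=M_i^{-\frac{p_i-1}{2}}$ and introduce the rescaled functions
$$v_i(x)=\frac{1}{M_i}\,u_i\!\bigl(\delta_{\lambda_i}(x)\bigr),$$
so that $v_i(0)=\frac{1}{M_i}u_i(x_i)=1$ is a local maximum. Using that $\Delta_b$ is homogeneous of degree two under the dilations $\delta_{\lambda_i}$, together with the exact balance $\lambda_i^2 M_i^{p_i-1}=1$, a direct computation shows that $v_i$ solves
$$-4\Delta_b^{(i)}v_i+\lambda_i^2 R^{(i)}v_i=K^{(i)}v_i^{p_i},$$
where $\Delta_b^{(i)}$, $R^{(i)}$ and $K^{(i)}$ denote the sublaplacian, the Webster curvature and $K$ read in the rescaled coordinates. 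As $i\to\infty$ the rescaled pseudohermitian structures converge in $C^k_{loc}$ to the flat structure of $\H^1$, so that $\lambda_i^2R^{(i)}\to 0$, $K^{(i)}\to K(0)$, and $\Delta_b^{(i)}$ converges to the Heisenberg sublaplacian locally in $C^k$.

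Next I would establish uniform local bounds and pass to the limit. The isolated blow-up estimate $u_i(x)\le C\,d(x,x_i)^{-\frac{2}{p_i-1}}$ rescales to $v_i(x)\le C|x|^{-\frac{2}{p_i-1}}$ in the Korányi norm, which, combined with the local-maximum normalization near the origin, bounds $v_i$ uniformly on every fixed Korányi ball. Inserting these bounds into the subelliptic regularity theory for the sublaplacian (Schauder estimates and the Harnack inequality recalled in \cite{Af}) yields uniform $\Gamma^{k,\alfa}(B_R)$ bounds on each fixed $B_R$; a diagonal argument then extracts a subsequence converging in $\Gamma^{k,\alfa}_{loc}(\H^1)$ to a limit $v$. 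Passing to the limit in the rescaled equation, $v$ is a positive (by Harnack) solution of $-4\Delta_b v=K(0)v^3$ on $\H^1$ with $v(0)=1$ attained as a maximum. Here I would invoke the classification theorem of Catino--Li--Monticelli--Roncoroni \cite{CLMR}, which holds without any finite-volume hypothesis, to conclude that $v$ is a Jerison--Lee bubble; matching the constant in the limiting equation with the normalization $v(0)=1$ then identifies $v=U\circ\delta_{K(0)^{1/2}/2}$ as in the statement.

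The remaining and most delicate point is to promote convergence on fixed balls to convergence on the slowly growing balls $B_{R_i}$ with $R_i/\log M_i\to 0$. Starting from a quantitative version of the previous step, in which the $\Gamma^{k,\alfa}(B_R)$ distance from the bubble is controlled by an explicit error $\e(R,i)$ collecting the geometric perturbation $\lambda_i^2R^{(i)}$, the variation of $K^{(i)}$, and the subcritical correction, I would select $R_i\to\infty$ so slowly that $\e(R_i,i)\le\e_i\to 0$. The threshold $R_i=o(\log M_i)$ is forced by the subcritical term: writing $v_i^{p_i}=v_i^3\,v_i^{\,p_i-3}$ and using $v_i^{\,p_i-3}-1\approx(p_i-3)\log v_i$ together with the bubble decay $v_i\sim|x|^{-2}$, the error accumulated over $B_R$ grows like $(3-p_i)\log R$, and balancing it against the blow-up rate of $M_i$ is exactly what confines $R_i$ below $\log M_i$. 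This quantitative control of the subcritical and geometric error terms, rather than the soft compactness of the earlier steps, is where I expect the main difficulty to lie.
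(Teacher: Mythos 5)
You are reconstructing a proof that this paper does not actually contain: Proposition \ref{BlowUpIsolatiBolle} is imported from \cite{Af} (``The following results were proved in \cite{Af}''), so your proposal can only be compared with the standard scheme that \cite{Af} implements. Your skeleton --- rescaling by $\delta_{\lambda_i}$ with $\lambda_i=M_i^{-\frac{p_i-1}{2}}$, locally uniform estimates, passage to a limit solving the flat equation on $\H^1$, identification of the limit via the Liouville theorem of \cite{CLMR} (exactly the ingredient the introduction singles out as unlocking blow-up analysis in dimension three), then a diagonal extraction --- is the right one. However, your boundedness step has a genuine gap: the rescaled decay bound $v_i(x)\le C|x|^{-2/(p_i-1)}$ ``combined with the local-maximum normalization'' does \emph{not} bound $v_i$ on fixed Korányi balls. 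The decay bound degenerates at the origin, and the definition of isolated blow-up point attaches no radius to the neighborhood on which $x_i$ is a maximum, so after rescaling that neighborhood may shrink to a point; nothing you wrote excludes a second, faster concentration at points $z_i\to 0$. The standard repair is a two-step argument: the decay bound makes the potential $K^{(i)}v_i^{p_i-1}$ uniformly bounded on each rescaled annulus, which by the subelliptic Harnack inequality gives a spherical Harnack estimate $\max_{\de B_r}v_i\le C\min_{\de B_r}v_i$ with $C$ independent of $i$ and $r$; and since $L^{(i)}v_i:=-4\Delta_b^{(i)}v_i+\lambda_i^2R^{(i)}v_i= K^{(i)}v_i^{p_i}\ge 0$ with zeroth-order coefficient $\lambda_i^2R^{(i)}\to 0$, the comparison principle (valid on $B_1$ for large $i$) plus a Poisson-kernel lower bound yield $1=v_i(0)\ge c\min_{\de B_r}v_i$ uniformly in $r\le 1$; together these bound $v_i$ on $B_1$. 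Note also that what survives in the limit from the local-maximum hypothesis is only $\nabla v(0)=0$, but criticality plus $v(0)=1$ suffices to pin down the bubble once \cite{CLMR} is invoked.

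The second gap is that your third paragraph misreads both the quantifier structure and the provenance of the condition $R_i/\log M_i\to 0$, and you flag as ``the main difficulty'' what is in fact the soft step. In the statement, $R_i$ and $\e_i$ are \emph{given}; what is chosen is the subsequence of $(u_i)$, and after relabelling the $k$-th function is tested on the $k$-th ball $B_{R_k}$. Once $\Gamma^{k,\alfa}_{\mathrm{loc}}$ convergence to the bubble is established, both conclusions are arranged simultaneously by picking indices $i_k$ so large that the approximation holds on $B_{R_k}$ within $\e_k$ and, since $M_i\to\infty$, also $\log M_{i_k}\ge kR_k$: the condition $R_i/\log M_i\to 0$ is \emph{imposed} by this choice and recorded because later results (for instance Proposition \ref{StimeCitazione} and the Pohozaev arguments) need it --- it is not a threshold derived from error balancing. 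Your proposed derivation cannot work as stated: at this stage no rate of $\tau_i=3-p_i$ relative to $M_i$ is available, since the bound $\tau_i=O(M_i^{-2})$ is Proposition \ref{StimeCitazione}, proved for isolated \emph{simple} blow-ups downstream of the present proposition, so invoking it here would be circular; and even granting that bound, balancing $\tau_i\log R_i$ against it would permit $R_i$ polynomially large in $M_i$, not merely $o(\log M_i)$. The quantitative bookkeeping you describe is therefore neither needed for the statement nor capable of producing the stated threshold.
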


\begin{proposizione}\label{LimiteBlowUp}
 If $\overline{x}$ is an isolated simple blow-up point then there exists $C$ such that
 $$M_iu_i(x)\le C d(x,x_i)^{-2}$$
 if $d(x,x_i)\le\frac{\rho}{2}$.
 Furthermore, up to subsequences, there exists $a>0$ such that
 $$M_iu(x)\to aG_{\overline{x}}(x) + b$$
 in $C^2_{\mathrm{loc}}(B_{\frac{\rho}{2}}(\overline{x})\setminus\{\overline{x}\})$, where $G_{\overline{x}}$ is the Green function of $L_{\theta}$ (which exists because $M$ has positive CR Yamabe class)
 and $L_{\theta}b=0$ on $B_{\frac{\rho}{2}}(\overline{x})$.
\end{proposizione}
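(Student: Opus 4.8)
The plan is to establish the two assertions separately, first the sharp pointwise bound and then the convergence; both follow the now-standard scheme for isolated simple blow-ups adapted to the CR setting (compare \cite{L}, \cite{MM}). Throughout write $r_i=R_iM_i^{-\frac{p_i-1}{2}}$ for the bubble scale and $\overline{w}_i(r)=r^{\frac{2}{p_i-1}}\overline{u}_i(r)$. I would begin by recording two facts. By Proposition \ref{BlowUpIsolatiBolle}, on the bubble region $d(x,x_i)\le r_i$ the rescaled solution is $C^k$-close to the Jerison--Lee bubble $U$; since $U(y)\approx|y|^{-2}$ in Korányi norm as $|y|\to\infty$ and $R_i/\log M_i\to 0$ forces $M_i^{3-p_i}\to 1$, this already yields $M_iu_i(x)\le Cd(x,x_i)^{-2}$ there, which serves as the base case below. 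Secondly, the isolated simple hypothesis gives $\overline{w}_i$ a single critical point in $(0,\rho)$; as the bubble produces a maximum of $\overline{w}_i$ near $r_i$, the function $\overline{w}_i$ is non-increasing on $[r_i,\rho]$, so $\overline{w}_i\le C$ and hence $u_i(x)\le Cd(x,x_i)^{-\frac{2}{p_i-1}}$ on that range. Combined with the subelliptic Harnack inequality applied on dyadic annuli after rescaling (where the last bound keeps the linearized coefficients bounded), one gets $u_i(x)\le C\overline{u}_i(d(x,x_i))$ on each sphere, so it suffices to estimate the spherical average.

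For the sharp bound I would use the Green representation
\[
u_i(x)=\int_M G(x,y)\,K(y)\,u_i(y)^{p_i}\,dV(y)
\]
and split the integral over the bubble region $d(y,x_i)\le r_i$, a neck $r_i<d(y,x_i)<\tfrac12 d(x,x_i)$, and the remaining far region. The bubble region is the main term: since $G(x,y)\approx\frac{1}{4\pi}d(x,x_i)^{-2}$ for $y$ there and, by the change of variables $z=\delta_{M_i^{(p_i-1)/2}}(x_i^{-1}\cdot y)$,
\[
\int_{d(y,x_i)\le r_i}K u_i^{p_i}\,dV\approx K(0)\,M_i^{\,2-p_i}\!\int U^{p_i}\approx C\,M_i^{-1},
\]
it contributes $\approx CM_i^{-1}d(x,x_i)^{-2}$, exactly the claimed order. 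The crux is to show the neck and far contributions are of lower order, which I would do by induction on the dyadic scales $d(x,x_i)\in[2^kr_i,2^{k+1}r_i]$, feeding the sharp bound already obtained at smaller scales into the neck integral, while the monotonicity of $\overline{w}_i$ controls the far region up to $\rho$.

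I expect the neck estimate to be the main obstacle. The naive bound $u_i\le Cd^{-\frac{2}{p_i-1}}$ is not enough: inserted into the neck it produces a term of size $d(x,x_i)^{-1}$, which for fixed radii dominates the target $M_i^{-1}d^{-2}$. A direct maximum principle comparison with $a_iG_{x_i}$ also fails, because $L_\theta u_i=Ku_i^{p_i}\ge 0$ has the wrong sign, making $d^{-2}$ a subsolution rather than a supersolution of the linearized operator. The self-improving induction circumvents this: once the sharp bound holds at scales $\le\tfrac12 d(x,x_i)$, the neck integral becomes $O(M_i^{-1}d^{-2}R_i^{-2})$ and the far integral $O(M_i^{-p_i}d^{-4})$, both negligible, so the estimate closes. (Alternatively one could transplant the decay from infinity to the origin by a CR Kelvin transform, in the spirit of the Riemannian arguments of \cite{L}.)

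Granted the sharp bound, the convergence is comparatively routine. Setting $\hat u_i=M_iu_i$, one has $L_\theta\hat u_i=K\,M_i^{1-p_i}\hat u_i^{p_i}$, and on compact subsets of $B_{\rho/2}(\overline{x})\setminus\{\overline{x}\}$ the bound $\hat u_i\le Cd^{-2}$ together with $M_i^{1-p_i}\to 0$ gives $L_\theta\hat u_i\to 0$. Subelliptic Schauder estimates then provide $C^{2,\alpha}_{\mathrm{loc}}$ bounds, so along a subsequence $\hat u_i\to h$ in $C^2_{\mathrm{loc}}$ of the punctured ball, with $L_\theta h=0$ and $0\le h\le Cd^{-2}$. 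A Bôcher-type removable singularity theorem for $L_\theta$ then forces $h=aG_{\overline{x}}+b$ with $a\ge 0$ and $L_\theta b=0$ on all of $B_{\rho/2}(\overline{x})$. Finally $a>0$, since the coefficient $a$ equals the limiting mass $\lim_i\int_{B_\delta}L_\theta\hat u_i\,dV=\lim_i M_i\int_{B_\delta}Ku_i^{p_i}\,dV$, which is bounded below by the bubble contribution $\lim_i M_i\int_{d(y,x_i)\le r_i}Ku_i^{p_i}\,dV=c\int_{\H^1}U^{3}>0$.
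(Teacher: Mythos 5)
A preliminary remark: the paper itself contains no proof of this proposition --- it is imported verbatim from \cite{Af} (``The following results were proved in [Af]''), so your attempt has to be measured against the standard isolated-simple blow-up scheme of \cite{L} as adapted to the CR setting there. Your second half (rescaling $\hat u_i=M_iu_i$, subelliptic Schauder compactness on the punctured ball, the B\^ocher-type decomposition $h=aG_{\overline{x}}+b$, and $a>0$ from the bubble's mass) follows that scheme and is sound \emph{given} the first half. The first half, however, has two genuine gaps. The first: your claim that ``$R_i/\log M_i\to 0$ forces $M_i^{3-p_i}\to 1$'' is a non sequitur --- that condition only constrains how fast $R_i$ may grow and carries no relation between $\tau_i=3-p_i$ and $M_i$. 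In fact $M_i^{\tau_i}\to 1$ is part of Proposition \ref{StimeCitazione}, which both here and in \cite{Af} comes \emph{after} the present proposition and is obtained from a Pohozaev identity that uses it; nor can it be had for free, since by Proposition \ref{BlowUpIsolatiBolle} one has $u_i\approx cM_iR_i^{-2}=cM_i^{\tau_i}\cdot M_i^{-1}r_i^{-2}$ at $d(x,x_i)=r_i$, so the asserted bound at that single scale is already \emph{equivalent} to $M_i^{\tau_i}\le C$. Your base case and your bubble-term evaluation $\int_{d(y,x_i)\le r_i}Ku_i^{p_i}\approx CM_i^{-1}$ both use this silently; without it they are off by the factor $M_i^{\tau_i}$, which a priori may diverge.

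The second gap is the far region in your dyadic induction. The bound $O(M_i^{-p_i}d^{-4})$ you assign to the far integral presupposes the sharp estimate $u_i\le CM_i^{-1}s^{-2}$ at scales $s\ge\frac12 d(x,x_i)$, which an upward induction has not yet provided. The only estimate available there is the Harnack-plus-monotonicity bound $u_i(y)\lesssim\overline{u}_i(r_i)\left(r_i/d(y,x_i)\right)^{2/(p_i-1)}\approx R_i^{-1}d(y,x_i)^{-1}$ (up to $M_i^{O(\tau_i)}$ factors), whose prefactor exceeds the sharp one by $\sim M_i\,d(y,x_i)/R_i$; feeding it into the far integral at a fixed scale $d\sim\rho$ produces a contribution of order $R_i^{-3}d^{-1}$, which exceeds the target $M_i^{-1}d^{-2}$ by $\sim M_i\,d\,R_i^{-3}\to\infty$ (recall $R_i\lesssim\log M_i$). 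So the induction does not close. The missing ingredient is precisely the step you discarded: one \emph{does} run a maximum-principle comparison on the annulus $r_i\le d(x,x_i)\le\rho$, not against $d^{-2}$ (your objection is valid only for that specific choice) but against $d^{-\delta}$ and $d^{-2+\delta}$, which are supersolutions of the linearized operator $-4\Delta_b+R-Ku_i^{p_i-1}$ exactly because of the smallness $Ku_i^{p_i-1}=o(1)d^{-2}$ that your own neck computation establishes. This yields the intermediate estimate $u_i\le CM_i^{-\lambda_i}d^{-2+\delta}$ with $\lambda_i>0$, then a Pohozaev identity yields $M_i^{\tau_i}\le C$ (closing the first gap), and only then does the Green representation --- which, moreover, must be localized to $B_\rho(x_i)$ with boundary terms, since $u_i$ is not controlled on all of $M$ where other blow-up points may sit, so your $\int_M$ formulation is also unjustified --- deliver the sharp bound. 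Finally, in the CR setting there is the known extra subtlety that $|\nabla_b|x||$ vanishes on the center axis, so Kor\'anyi-gauge powers are only degenerate supersolutions; handling this degeneracy is part of what \cite{Af} does and your sketch never addresses.
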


\begin{proposizione}\label{StimeCitazione}
 In the above hypotheses, if $\tau_i=3-p_i$ then $\tau_i=O(M_i^{-2})$ and in particular $M_i^{\tau_i}\to 1$.
 Furthermore $u_i$ satisfies the following estimates in $B_{\rho}(x_i)\setminus B_{r_i}(x_i)$, where $r_i=R_iM_i^{-\frac{p_i-1}{2}}$:
 $$u_i(x)\le C M_i^{-1}d(x,x_i)^{-2},$$
 $$|(u_i)_{,1}(x)|\le C M_i^{-1}d(x,x_i)^{-3},$$
 $$|(u_i)_{,11}(x)|\le C M_i^{-1}d(x,x_i)^{-4}$$
 $$|(u_i)_{,1\con{1}}(x)|\le C M_i^{-1}d(x,x_i)^{-4},$$
 $$|(u_i)_{,0}(x)|\le C M_i^{-1}d(x,x_i)^{-4}$$
 $$|(u_i)_{,01}(x)|\le C M_i^{-1}d(x,x_i)^{-5},$$
 $$|(u_i)_{,00}(x)|\le C M_i^{-1}d(x,x_i)^{-6}.$$
 The same estimates hold for the corresponding derivatives in $\H^1$ in pseudohermitian normal coordinates.
\end{proposizione}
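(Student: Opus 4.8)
The statement bundles three essentially independent assertions, and I would dispatch them in order, noting first that the bound $u_i(x)\le CM_i^{-1}d(x,x_i)^{-2}$ is nothing but a restatement of the estimate $M_iu_i(x)\le Cd(x,x_i)^{-2}$ already furnished by Proposition \ref{LimiteBlowUp} on $B_{\rho/2}(x_i)$; after shrinking $\rho$ so that the two radii agree, this first inequality is immediate. The genuine work therefore lies in (i) the decay rate $\tau_i=O(M_i^{-2})$ and (ii) the hierarchy of derivative estimates, and I would prove them in this order, since the second uses $M_i^{\tau_i}\to1$.

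For the rate of $\tau_i$ the natural tool is a Pohozaev-type identity adapted to the Heisenberg dilation structure. Writing the equation as $-4\Delta_bu_i+Ru_i=Ku_i^{p_i}$ in CR normal coordinates around $\overline{x}$, I would multiply by $\Xi u_i+\tfrac{Q-2}{2}u_i$, where $\Xi$ generates the dilations $\delta_\lambda$ and $Q=4$ is the homogeneous dimension, and integrate over a fixed ball $B_\sigma(x_i)$. Since $\Xi$ is the infinitesimal generator of the conformal CR dilations, the bulk term pairs with the critical exponent: the coefficient of $\int_{B_\sigma}Ku_i^{p_i+1}$ is $\tfrac{Q}{p_i+1}-\tfrac{Q-2}{2}$, which vanishes exactly at $p_i=3$ and equals $c\,\tau_i+o(\tau_i)$ for a fixed $c>0$. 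The identity then reads, schematically,
\[
 c\,\tau_i\int_{B_\sigma}Ku_i^{p_i+1}=\mathcal{B}_\sigma(u_i)+(\text{lower order in }\nabla K,\,R),
\]
where $\mathcal{B}_\sigma$ collects boundary integrals over $\de B_\sigma(x_i)$ that are quadratic in $u_i$ and its first derivatives. By Proposition \ref{BlowUpIsolatiBolle} the left-hand integral is bounded below by a positive constant, being comparable to the energy of the Jerison--Lee bubble. On the other hand, by Proposition \ref{LimiteBlowUp} one has $M_iu_i\to aG_{\overline{x}}+b$ in $C^2$ on $\de B_\sigma(x_i)$, so there $u_i$ and $\nabla_bu_i$ are of size $M_i^{-1}$ and hence $\mathcal{B}_\sigma(u_i)=O(M_i^{-2})$. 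Comparing orders yields $\tau_i=O(M_i^{-2})$, whence $M_i^{\tau_i}=\exp(\tau_i\log M_i)\to1$.

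For the derivative estimates I would rescale at the scale of the distance to the blow-up point. Fix $x$ with $d:=d(x,x_i)\in[r_i,\rho]$ and set $\tilde u(\eta)=M_id^2\,u_i(x\cdot\delta_d\eta)$ for $\eta\in B_2(0)\subset\H^1$, reading everything through the pseudohermitian normal coordinate chart; the first estimate gives $\N{\tilde u}_{L^\infty(B_2)}\le C$ uniformly in $i$ and $x$. Because $\Delta_b$ is homogeneous of degree $-2$ and $\delta_d$ dilates horizontal directions by $d$ and the characteristic direction by $d^2$, the equation transforms into
\[
 -4\Delta_b\tilde u+Rd^2\,\tilde u=M_i^{1-p_i}d^{4-2p_i}\,K\,\tilde u^{p_i},
\]
and the coefficient on the right tends to $0$ uniformly on the annulus, since $d\ge r_i$, $R_i\to\infty$ and $M_i^{\tau_i}\to1$ (just proved). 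Thus the rescaled right-hand side and the zeroth-order coefficient $Rd^2$ are uniformly bounded, and the interior subelliptic Schauder estimates for the sublaplacian (recalled in Section 2.2 of \cite{Af}) give $\N{\tilde u}_{\Gamma^{k,\alpha}(B_{1/2})}\le C$ for every $k$, uniformly. Undoing the dilation through the chain rule, each horizontal covariant derivative of $u_i$ carries a factor $d^{-1}$ and each derivative along the characteristic direction a factor $d^{-2}$, while $u_i=(M_id^2)^{-1}\tilde u$; collecting powers reproduces the claimed exponents, e.g. $|(u_i)_{,1}|\le CM_i^{-1}d^{-3}$, $|(u_i)_{,0}|\le CM_i^{-1}d^{-4}$, $|(u_i)_{,01}|\le CM_i^{-1}d^{-5}$ and $|(u_i)_{,00}|\le CM_i^{-1}d^{-6}$. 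Since in CR normal coordinates the Tanaka--Webster connection coefficients and their derivatives are bounded, the covariant derivatives differ from the flat Heisenberg ones by bounded lower-order terms, which simultaneously yields the corresponding bounds for the model derivatives in $\H^1$.

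The main obstacle is the passage from the flat Heisenberg model to the curved setting: the homogeneity identities above hold exactly on $\H^1$ but only approximately in CR normal coordinates, so in both steps one must track the errors produced by the deviation of $\Delta_b$, $R$ and the connection from their flat counterparts, and check they are lower order uniformly in $i$ and in $d$ throughout the annulus. This is most delicate in the Pohozaev step, because the target order $M_i^{-2}$ is precisely the order at which the regular part $A_{\overline{x}}$ of the Green function enters; one must therefore choose the normal coordinates and the multiplier so that the geometric error terms are genuinely $o(M_i^{-2})$ rather than competing with the boundary term one is trying to isolate.
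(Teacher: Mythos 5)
Your zeroth-order bound (direct restatement of Proposition \ref{LimiteBlowUp}) and your treatment of the derivative estimates (rescaling at scale $d=d(x,x_i)$ plus interior subelliptic Schauder estimates, then undoing the dilation) are sound, and the latter is essentially the route the paper takes by citing Lemma A.4 of \cite{Af}. The genuine gap is in your Pohozaev step for $\tau_i=O(M_i^{-2})$. For non-constant $K$ the identity contains the interior term
$$\frac{1}{4(p_i+1)}\int_{B_{\sigma}(x_i)}\Xi(K)\,u_i^{p_i+1},$$
which you silently absorb into ``lower order in $\nabla K$''. It is not lower order a priori: writing $\Xi K = x\,XK(x_i)+y\,YK(x_i)+O(|x|^2)$, the quadratic part is $O(M_i^{-2})$ by Lemma \ref{StimaIntegralex1u4}, but the linear part is controlled only by $|\nabla_bK(x_i)|\int|x|u_i^{p_i+1}=|\nabla_bK(x_i)|\,O(M_i^{-1})$; even exploiting the evenness of the Jerison--Lee bubble one improves this only to $|\nabla_bK(x_i)|\,o(M_i^{-1})$, because the convergence in Proposition \ref{BlowUpIsolatiBolle} carries no quantitative rate. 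At this stage nothing is known about $\nabla_bK(x_i)$: Proposition \ref{GradienteNullo}, which gives $\nabla_bK(x_i)=O(M_i^{-1})$, appears \emph{after} the present proposition and uses it in its proof. So your argument, as written, yields only $\tau_i=O(M_i^{-1})$, and assuming the $\nabla K$ contribution is $O(M_i^{-2})$ amounts to assuming the conclusion of Proposition \ref{GradienteNullo}, i.e.\ it is circular. The missing idea is a bootstrap: (i) prove the derivative estimates first; (ii) run the Pohozaev identity with the crude bound on the $\Xi(K)$ term to get $\tau_i=O(M_i^{-1})$, hence $M_i^{\tau_i}\to 1$; (iii) run the translation-type identity of Proposition \ref{GradienteNullo} (which needs exactly the annulus derivative estimates and $M_i^{\tau_i}\to 1$, not the sharp rate of $\tau_i$) to get $\nabla_bK(x_i)=O(M_i^{-1})$; (iv) rerun the Pohozaev identity to conclude $\tau_i=O(M_i^{-2})$. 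This is precisely the adaptation needed when passing from the constant-$K$ setting of Lemma 4.9 in \cite{Af} to the present one, and it is the point your last paragraph misses: you locate the delicacy in the curvature/connection error terms (which CR normal coordinates do handle), not in the $\nabla K$ term, which is the actual new obstruction.

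Relatedly, your logical ordering is backwards. You prove the rate of $\tau_i$ first on the grounds that the derivative estimates use $M_i^{\tau_i}\to 1$; in fact they do not, since $\tau_i\ge 0$ makes the rescaled coefficient $M_i^{1-p_i}d^{4-2p_i}\le R_i^{4-2p_i}M_i^{-(p_i-1)\tau_i}\le R_i^{4-2p_i}\to 0$ automatically on the annulus. Conversely, the curved Pohozaev identity contains the interior term $\left(\Xi u_i+u_i\right)\left(\Delta_b-\cerchio{\Delta}_b\right)u_i$, whose control on $B_{\sigma}(x_i)\setminus B_{r_i}(x_i)$ requires precisely the annulus derivative estimates you postpone. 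The derivative estimates must therefore come first, which is consistent with the paper's own (purely citational) proof and with the bootstrap outlined above.
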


\begin{proof}
 The estimate on $\tau_i$ is proved as Lemma 4.9 in \cite{Af} but using Proposition 4.11 there instead of Lemma 4.6.
 The statement about the derivatives in pseudohermitian normal coordinates follows from Lemmas A.4 in \cite{Af}.
\end{proof}

In order to perform the blow-up analysis, we will need an estimate for $\nabla_bK(x_i)$.

\begin{proposizione}\label{GradienteNullo}
 If $x_i\to\overline{x}$ is an isolated simple blow-up point then $\nabla_b K(x_i)=O(M_i^{-1})$ and $TK(x_i)=O(M_i^{-1})$. In particular $\nabla_{g_{\theta}} K(\overline{x})=0$.
\end{proposizione}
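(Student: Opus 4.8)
The plan is to derive a localized Pohozaev--Kazdan-Warner type identity by testing Equation \eqref{EquazioneBlowUp} against the derivatives of $u_i$ along the left-invariant frame of $\H^1$ in pseudohermitian normal coordinates centered at $x_i$, and to read off the derivatives of $K$ at $x_i$ from the resulting leading term. For the horizontal directions, fix $X_\ell\in\{X,Y\}$, multiply \eqref{EquazioneBlowUp} by $X_\ell u_i$ and integrate over a Korányi ball $B_\rho(x_i)$ with $\rho$ small and fixed. On the right-hand side I would write $Ku_i^{p_i}\,X_\ell u_i=\frac{1}{p_i+1}K\,X_\ell(u_i^{p_i+1})$ and integrate by parts, producing $-\frac{1}{p_i+1}\int_{B_\rho}(X_\ell K)\,u_i^{p_i+1}$ plus a boundary term. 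The leading contribution is $-\frac{1}{p_i+1}X_\ell K(x_i)\int_{B_\rho}u_i^{p_i+1}$; by the rescaling of Proposition \ref{BlowUpIsolatiBolle} together with $M_i^{\tau_i}\to1$ from Proposition \ref{StimeCitazione} one has $\int_{B_\rho}u_i^{p_i+1}\to c_0:=\int_{\H^1}U^{4}>0$, while the error from freezing $X_\ell K$ at $x_i$ is of lower order by the bubble scaling.

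The left-hand side is where the subriemannian geometry enters. Integrating $\int_{B_\rho}(-4\Delta_b u_i)(X_\ell u_i)+\int_{B_\rho}Ru_i(X_\ell u_i)$ by parts and using that the left-invariant fields are divergence free, the symmetric terms $\tfrac12 X_\ell(|\nabla_b u_i|^2)$ and $\tfrac12 X_\ell(Ru_i^2)$ contribute only boundary integrals together with lower-order bulk remainders, and the one leading bulk term that survives comes from the commutator $[X_k,X_\ell]=cT$ ($c\ne0$) of the horizontal frame, which has no Riemannian analogue. Every boundary integral is controlled by Proposition \ref{StimeCitazione}: on $\partial B_\rho$ one has $u_i\le CM_i^{-1}\rho^{-2}$, $|X u_i|\le CM_i^{-1}\rho^{-3}$ and $|T u_i|\le CM_i^{-1}\rho^{-4}$, so all boundary terms are $O(M_i^{-2})$. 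The identity therefore collapses to
\[
 c_0\,X_\ell K(x_i)\,(1+o(1)) = c'\int_{B_\rho}(X_k u_i)(T u_i)\,dV + O(M_i^{-2}),
\]
and the whole estimate is reduced to bounding the commutator integral $\int_{B_\rho}(X_k u_i)(T u_i)$.

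This commutator integral is the main obstacle, and I would control it by exploiting symmetry. After the rescaling $x=x_i\cdot\delta_{\lambda_i}(y)$ with $\lambda_i=M_i^{-(p_i-1)/2}$, the core part $\int_{B_{r_i}}$ equals $M_i^{2-(p_i-1)/2}\int_{B_{R_i}}(X_k U_i)(TU_i)\,dV_y$ with $U_i\to U$ and $r_i=R_i\lambda_i$. Since $U=(t^2+(1+|z|^2)^2)^{-1/2}$ is even in each variable, the integrand $(X_k U)(TU)$ is odd under a coordinate reflection and hence $\int_{B_{R_i}(0)}(X_k U)(TU)\,dV_y=0$ exactly; the surviving core contribution comes only from $U_i-U$, measured by $\e_i$ in $\Gamma^{k,\alfa}(B_{R_i})$. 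On the annulus $B_\rho\setminus B_{r_i}$ I would use that $M_iu_i\to aG_{\overline{x}}+b$ (Proposition \ref{LimiteBlowUp}), whose leading singular part $\frac{a}{4\pi|x|^2}$ is Korányi-radial and so again contributes nothing to $(X_k\,\cdot\,)(T\,\cdot\,)$ by the same reflection symmetry, leaving only the regular part. The delicate point is to balance these residual bounds against the rescaling weight $M_i^{2-(p_i-1)/2}\to M_i$: naive pointwise estimates from Proposition \ref{StimeCitazione} alone diverge, so the cancellations above are essential, and carrying them out carefully is what yields $X_\ell K(x_i)=O(M_i^{-1})$.

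For the Reeb direction I would instead test \eqref{EquazioneBlowUp} against $T u_i$. This case is cleaner because $T$ is central in the Heisenberg Lie algebra, so $[X_k,T]=0$ and no commutator bulk term appears; the left-hand side reduces to boundary terms, all $O(M_i^{-2})$, giving directly $TK(x_i)=O(M_i^{-2})$, in particular $O(M_i^{-1})$. Combining the two estimates yields $\nabla_b K(x_i)=O(M_i^{-1})$ and $TK(x_i)=O(M_i^{-1})$. Finally, since $M_i\to\infty$ and $x_i\to\overline{x}$, passing to the limit and using the continuity of $\nabla_{g_{\theta}}K$, whose components in the adapted frame are exactly the horizontal derivatives and the Reeb derivative, gives $\nabla_{g_{\theta}}K(\overline{x})=0$.
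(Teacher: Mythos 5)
Your overall strategy (test equation \eqref{EquazioneBlowUp} against a derivative of $u_i$, integrate by parts, and read off the derivatives of $K$ at $x_i$ from the term $\int (X_\ell K)u_i^{p_i+1}$) is the same as the paper's, and your treatment of the Reeb direction coincides with the paper's (multiply by $Tu_i$; note however that freezing $TK$ at $x_i$ already costs $\int_{B_\rho}|x|u_i^{p_i+1}=O(M_i^{-1})$ by Lemma \ref{StimaIntegralex1u4}, so you get $TK(x_i)=O(M_i^{-1})$, not $O(M_i^{-2})$ — harmless here). The genuine gap is in the horizontal directions, exactly at the point you yourself flag as the main obstacle: the commutator term $\int_{B_\rho}(Yu_i)(Tu_i)$. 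Your parity argument does not close it quantitatively. The exact cancellation $\int_{B_{R_i}}(YU)(TU)=0$ holds only for the limiting bubble; what you need to bound is
$$\int_{B_{r_i}(x_i)}(Yu_i)(Tu_i)\,d\cerchio{V} \;=\; M_i^2\lambda_i\int_{B_{R_i}}(YU_i)(TU_i),\qquad \lambda_i=M_i^{-\frac{p_i-1}{2}},\ M_i^2\lambda_i\sim M_i,$$
and Proposition \ref{BlowUpIsolatiBolle} only gives $\N{U_i-U\circ\delta_{K(0)^{1/2}/2}}_{\Gamma^{k,\alfa}(B_{R_i})}\le\e_i$ with $\e_i\to 0$ \emph{arbitrary and carrying no rate in $M_i$} (indeed $R_i/\log M_i\to 0$, so $M_i$ is exponentially large compared with $R_i$). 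The best the parity cancellation then yields for the core is $O\left(M_i\,\e_i(R_i+\e_iR_i^4)\right)$, which is not even $o(1)$, let alone $O(M_i^{-1})$. The annulus has the same defect: the cancellation applies to the exact profile $a/(4\pi|x|^2)$, but the convergence $M_iu_i\to aG_{\overline{x}}$ of Proposition \ref{LimiteBlowUp} is only $C^2_{\mathrm{loc}}$ away from the point, with no rate, while the naive size of the annulus integral from Proposition \ref{StimeCitazione} is $M_i^{-2}r_i^{-3}\sim M_iR_i^{-3}$. So your identity reduces the proposition to a term you cannot control; neither the rate $O(M_i^{-1})$ (which is what is actually used later, e.g.\ to get $d(x_i^k,\overline{x}^k)\lesssim u_i(x_i^k)^{-1/2}$ in Proposition \ref{DicotomiaSoluzioni}) nor even the qualitative vanishing follows by this route.

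The paper's proof removes the commutator instead of trying to estimate it: it tests against $\cerchio{Z}_ru_i\chi$, where $Z_r$ is the \emph{right}-invariant field \eqref{CampoInvarianteADestra} and $\chi$ a cutoff. Since left-invariant fields commute with right-invariant ones, $[\cerchio{Z},\cerchio{Z}_r]=[\cerchio{\con{Z}},\cerchio{Z}_r]=0$, so the sublaplacian term collapses to the exact derivative $\cerchio{Z}_r|\cerchio{Z}u_i|^2$; integrating by parts puts everything on $\cerchio{Z}_r\chi$, supported in the outer annulus where Proposition \ref{StimeCitazione} gives $O(M_i^{-2})$. Because $Z_r=Z$ at the center of the coordinates, the right-hand side still produces $(C+o(1))ZK(x_i)$, and the estimate follows. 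If you want to salvage your computation, this substitution — a test field that is right-invariant but agrees with the left-invariant one at $x_i$ — is precisely the missing idea.
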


\begin{proof}
 Let $r_i=R_iM_i^{-\frac{p_i-1}{2}}$ and let $\chi$ be a smooth function supported in $B_{\rho}(x_i)$ such that $\chi=1$ in $B_{\rho/2}(x_i)$ and $0\le\chi\le 1$.
 Let $Z_r$ be the right invariant vector field on $\H^1$ defined in formula \eqref{CampoInvarianteADestra}. Since $Z_r=Z-2i\con{z}T$, the estimates stated in Proposition \ref{StimeCitazione} for $\cerchio{Z}u_i$ hold also for $\cerchio{Z}_ru_i$.
 
 Using pseudohermitian normal coordinates around $x_i$, let us multiply equation \eqref{EquazioneBlowUp} by $\cerchio{Z}_ru_i\chi$ and integrate with respect to $d\cerchio{V}$. Using Lemmas \ref{StimaDifferenzaSublaplaciano}, \ref{LemmauZru} and \ref{StimaIntegralex1u4}, the fact that $[\cerchio{Z},\cerchio{Z}_r]=0$ and Propositions \ref{BlowUpIsolatiBolle} and \ref{StimeCitazione}, we get
 $$0=\int_{B_{\rho}(x_i)}(-4\Delta_bu_i+ Ru_i)\cerchio{Z}_ru_i\chi d\cerchio{V} -\int_{B_{\rho}(x_i)} Ku^{p_i}\cerchio{Z}_ru_i\chi d\cerchio{V} =$$
 $$= -4\int_{B_{\rho}(x_i)}\cerchio{\Delta}_bu_i\cerchio{Z}_ru_i\chi d\cerchio{V}+ O(M_i^{-1})+\int_{B_{\rho}(x_i)}Ru_i\cerchio{Z}_ru_i\chi d\cerchio{V}+$$
 $$-\frac{1}{p_i+1}\int_{B_{\rho}(x_i)}\left(K(x_i)+ZK(x_i)z + \con{Z}K(x_i)\con{z} + O(|x|^2)\right)\cerchio{Z}_r(u_i^{p_i+1})\chi d\cerchio{V} =$$
 $$= 4\int_{B_{\rho}(x_i)}\cerchio{Z}u_i\cerchio{\con{Z}}\cerchio{Z}_ru_i\chi d\cerchio{V}+ 4\int_{B_{\rho}(x_i)}\cerchio{\con{Z}}u_i\cerchio{Z}\cerchio{Z}_ru_i\chi d\cerchio{V}+O(M_i^{-1})+$$
 $$+O(M_i^{-1})+\frac{1}{p_i+1}\int_{B_{\rho}(x_i)}\left(Z_rK(x_i)+ O(|x|)\right)u_i^{p_i+1}\chi d\cerchio{V} +$$
 $$+\frac{1}{p_i+1}\int_{B_{\rho}(x_i)}\left(K(x_i)+ZK(x_i)z + \con{Z}K(x_i)\con{z} + O(|x|^2)\right)u_i^{p_i+1}\cerchio{Z}_r(\chi)d\cerchio{V} =$$
 $$= 4\int_{B_{\rho}(x_i)}\cerchio{Z}_r|\cerchio{Z}u_i|^2\chi d\cerchio{V}+O(M_i^{-1})+$$
 $$+\frac{1}{p_i+1}ZK(x_i)M_i^{-2(p_i-1)}\int_{B_{R_i}(x_i)}(u_i\circ\delta_{M_i^{-\frac{p_i-1}{2}}})^{p_i+1}\chi\circ\delta_{M_i^{-\frac{p_i-1}{2}}} d\cerchio{V}+$$
 $$+ZK(x_i)O\left(\int_{B_{\rho}(x_i)\setminus B_{r_i}(x_i)}M_i^{-(p_i+1)}|x|^{-2(p_i+1)}d\cerchio{V}\right)+$$
 $$+O\left(\int_{B_{\rho}(x_i)}|x|u_i^{p_i+1}\chi d\cerchio{V}\right)+O\left(\int_{B_{\rho}(x_i)\setminus B_{\rho/2}(x_i)}M_i^{-(p_i-1)}|x|^{-2(p_i-1)}d\cerchio{V}\right) =$$
 $$= -4\int_{B_{\rho}(x_i)}|\cerchio{Z}u_i|^2\cerchio{Z}_r\chi d\cerchio{V}+O(M_i^{-1})+(C+o(1))ZK(x_i)+$$
 $$+ZK(x_i)O\left(M_i^{-4}r_i^{4-2(p_i+1)}\right)+O\left(M_i^{-1}\right)+O\left(M_i^{-2}\right) =$$
 $$= O\left(\int_{B_{\rho}(x_i)\setminus B_{\rho/2}(x_i)}M_i^{-2}|x|^{-6}d\cerchio{V}\right) +O\left(M_i^{-1}\right)+(C+o(1))ZK(x_i)+$$
 $$+ZK(x_i)O\left(R_i^{-4}\right)+O\left(M_i^{-1}+M_i^{-4}r_i^{5-2(p_i+1)}\right)+O\left(M_i^{-2}\right) =$$
 $$= O\left(M_i^{-1}\right) + (C+o(1))ZK(x_i)$$
 which implies the first part of the thesis. The estimate for $TK(x_i)$ is proved similarly by multiplying equation \eqref{EquazioneBlowUp} by $Tu_i\chi$.
\end{proof}

In order to carry out the blow-up analysis, we will need a version of the above Proposition for the case in which also $K$ depends on $i$.

\begin{lemma}
 If $u_i$ is a sequence of solutions of the equation
 $$L_{\theta}u_i = K_iu_i^{p_i}$$
 with $K_i$ satisfying $\N{\nabla_{g_{\theta}}K_i}_{L^{\infty}}\to 0$, $\N{\nabla^2_{g_{\theta}}K_i}_{L^{\infty}}\to 0$, and if $x_i\to\overline{x}$ is an isolated simple blow-up point then $\nabla_b K(x_i)=o(M_i^{-1})$.
\end{lemma}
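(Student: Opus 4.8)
The plan is to rerun the argument of Proposition~\ref{GradienteNullo} with $K_i$ in place of $K$, keeping explicit track of how every error term depends on the derivatives of $K_i$. Testing the equation $L_{\theta}u_i=K_iu_i^{p_i}$ against $\cerchio{Z}_ru_i\,\chi$ and integrating by parts exactly as there, the whole computation collapses to an identity of the form
$$0=(c_i+o(1))\,ZK_i(x_i)+E_i,$$
where $c_i$ is bounded away from $0$ (it depends only on $K_i(x_i)$ through the limit profile of Proposition~\ref{BlowUpIsolatiBolle}, and $K_i(x_i)$ stays bounded above and below by positive constants as in the application). The whole point is to improve the remainder from the size $O(M_i^{-1})$ obtained in Proposition~\ref{GradienteNullo} to $E_i=o(M_i^{-1})$.

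First I would isolate the error terms carrying a derivative of $K_i$. These all come from the Taylor expansion in pseudohermitian normal coordinates
$$K_i(x)=K_i(x_i)+ZK_i(x_i)\,z+\con{Z}K_i(x_i)\,\con{z}+O\!\left(\N{\nabla^2_{g_{\theta}}K_i}_{L^{\infty}}|x|^2\right),$$
and in particular from $\cerchio{Z}_rK_i(x)-Z_rK_i(x_i)=O\!\left(\N{\nabla^2_{g_{\theta}}K_i}_{L^{\infty}}|x|\right)$. As in Proposition~\ref{GradienteNullo}, this produces a term $O\!\left(\int_{B_{\rho}(x_i)}|x|\,u_i^{p_i+1}\chi\,d\cerchio{V}\right)$, which by the bubble profile and Proposition~\ref{StimeCitazione} is $O\!\left(\N{\nabla^2_{g_{\theta}}K_i}_{L^{\infty}}M_i^{-1}\right)$. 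Hence the hypothesis $\N{\nabla^2_{g_{\theta}}K_i}_{L^{\infty}}\to0$ makes this part of $E_i$ exactly $o(M_i^{-1})$, while the hypothesis $\N{\nabla_{g_{\theta}}K_i}_{L^{\infty}}\to0$ guarantees that the blow-up results of Propositions~\ref{BlowUpIsolatiBolle}--\ref{StimeCitazione} keep applying with $K_i$ and that the coefficient $c_i$ is undisturbed.

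It remains to treat the part of $E_i$ that does not involve $K_i$: the boundary contributions produced when $\cerchio{Z}_r$ falls on the cut-off $\chi$, and the term coming from the difference between $\Delta_b$ and the flat sublaplacian $\cerchio{\Delta}_b$ estimated through Lemma~\ref{StimaDifferenzaSublaplaciano}. I would argue that, although these were only bounded by $O(M_i^{-1})$ in Proposition~\ref{GradienteNullo}, they are in fact $O(M_i^{-2})$. The cut-off terms are supported in the fixed annulus $B_{\rho}(x_i)\setminus B_{\rho/2}(x_i)$, where the decay estimates of Proposition~\ref{StimeCitazione} give $u_i=O(M_i^{-1}|x|^{-2})$ together with the matching bounds on the derivatives, so they are $O(M_i^{-2})$; and the sublaplacian-difference term is small because in CR normal coordinates the pseudohermitian structure is flat to high order at $x_i$, so that after rescaling its leading contribution integrates to zero against the translation field $\cerchio{Z}_r$ by the parity of the bubble $U$, leaving a remainder of size $O(M_i^{-2})$.

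Putting $E_i=o(M_i^{-1})$ into the identity yields $0=(c_i+o(1))ZK_i(x_i)+o(M_i^{-1})$, whence $ZK_i(x_i)=o(M_i^{-1})$; taking complex conjugates gives $\con{Z}K_i(x_i)=o(M_i^{-1})$ as well, that is $\nabla_bK_i(x_i)=o(M_i^{-1})$. I expect the last step to be the main obstacle: establishing that the purely geometric remainder is genuinely $o(M_i^{-1})$ rather than merely $O(M_i^{-1})$ as in Proposition~\ref{GradienteNullo}. This is exactly the point where the higher-order flatness of CR normal coordinates and the symmetry of the Jerison--Lee bubble under the reflection dual to $\cerchio{Z}_r$ have to be exploited quantitatively; the rest is a bookkeeping of the estimates already present in Proposition~\ref{GradienteNullo}.
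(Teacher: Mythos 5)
Your plan coincides with the paper's: rerun Proposition \ref{GradienteNullo} with $K_i$ in place of $K$, and kill the $K$-dependent errors by writing $\cerchio{Z}_rK_i(x)=Z_rK_i(x_i)+o_i(1)O(|x|)$, so that Lemma \ref{StimaIntegralex1u4} turns them into $o(M_i^{-1})$; that part is correct, as is the remark that the estimates of Propositions \ref{BlowUpIsolatiBolle}--\ref{StimeCitazione} must be re-derived for $i$-dependent $K_i$. The genuine gap is in your enumeration of the geometric error terms. You list only two of them, the cut-off contributions and the $(\Delta_b-\cerchio{\Delta}_b)$-term, but the identity also contains the zeroth-order term of $L_{\theta}$, namely $\int_{B_{\rho}(x_i)}Ru_i\cerchio{Z}_ru_i\chi\,d\cerchio{V}$, which Lemma \ref{LemmauZru} bounds only by $O(M_i^{-1})$. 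A priori this term sits exactly at the critical size (after rescaling, its leading part is $R(x_i)M_i^{-1}$ times an integral of bubble type), so it cannot simply be dropped: one must either exhibit a cancellation (e.g.\ write $u_i\cerchio{Z}_ru_i=\frac{1}{2}\cerchio{Z}_r(u_i^2)$, use that $\cerchio{Z}_r$ is divergence-free and integrate by parts, which with Proposition \ref{StimeCitazione} gives $O(M_i^{-2}\log M_i)$), or work in CR normal coordinates, where $R$ vanishes to high order at the base point. The latter is what the paper does, and it is why the paper's proof prescribes ``using CR normal coordinates'' for the whole computation, not for a single term. As written, your identity only recovers $ZK_i(x_i)=O(M_i^{-1})$, i.e.\ the conclusion of Proposition \ref{GradienteNullo}, not the claimed $o(M_i^{-1})$.

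Relatedly, for the term you do address, the mechanism you invoke (parity of the bubble against $\cerchio{Z}_r$ after rescaling) is neither what the paper uses nor necessary. In CR normal coordinates the coefficients of $\cerchio{\Delta}_b-\Delta_b$ are $O(|x|^3)$ against first derivatives, $O(|x|^4)$ against second horizontal ones, and so on (this is exactly what Lemma \ref{LemmaDifferenzaSublaplaciano} exploits), rather than the $O(|x|)$, $O(|x|^2)$ bounds of pseudohermitian normal coordinates that enter Lemma \ref{StimaDifferenzaSublaplaciano}; repeating that lemma's brute-force computation with the improved coefficients already yields a bound well below $M_i^{-1}$, with no symmetry argument at all. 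If instead you insist on pseudohermitian normal coordinates, where the coefficients really are only $O(|x|)$, the claimed cancellation ``by parity'' would have to be checked against the actual curvature and torsion coefficients at $x_i$ --- you correctly flag this as the main obstacle, but the resolution is not to prove such a cancellation: it is to change coordinates, which simultaneously disposes of the curvature term you omitted.
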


\begin{proof}
 The proof is almost equal to the proof of Proposition \ref{GradienteNullo}, but using CR normal coordinates, and using the hypotheses to write $Z_rK_i(x)= Z_rK_i(x_i)+o_i(1)O(|x|)$. The necessary estimates from Proposition \ref{StimeCitazione} are extended to the case of $K_i$ depending by $i$ with our hypotheses in a straightforward way.
\end{proof}

\begin{lemma}\label{SegnoTermineOrdineZero}
 If $\overline{x}$ is an isolated simple blow-up point for the equation
 $$L_{\theta}u_i = K_iu_i^{p_i}$$
 with $K_i$ satisfying $\N{\nabla_{g_{\theta}}K_i}_{L^{\infty}}\to 0$, $\N{\nabla^2_{g_{\theta}}K_i}_{L^{\infty}}\to 0$, and $M_iu_i\to h$ in $C^2_{\mathrm{loc}}(B_r(\overline{x})\setminus\{\overline{x}\})$, and in CR normal coordinates around $\overline{x}$ one has $h(x)=\frac{a}{|x|^2}+ A + o(1)$ with $a>0$, then $A\le 0$.
\end{lemma}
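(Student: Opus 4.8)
The plan is to read off the sign of $A$ from a Pohozaev identity built on the dilation generator $\Xi$, following the Riemannian argument of Li. Working in CR normal coordinates centred at $x_i$ (so that $x_i$ becomes the origin and the pseudohermitian structure of $M$ agrees with that of $\H^1$ up to terms of higher homogeneity in $|x|$), I would multiply the equation $L_{\theta}u_i=K_iu_i^{p_i}$ by the multiplier $\Xi u_i+u_i$ — the coefficient $1$ being $\tfrac{Q-2}{2}$ with homogeneous dimension $Q=4$ — and integrate over a Korányi ball $B_{\sigma}$ of fixed small radius. Integrating by parts and using that $\Xi$ generates the dilations $\delta_{\lambda}$, this yields an identity of the shape
\[
 P(u_i,\sigma)= -\frac{\tau_i}{p_i+1}\int_{B_{\sigma}}K_iu_i^{p_i+1}\,dV -\frac{1}{p_i+1}\int_{B_{\sigma}}(\Xi K_i)\,u_i^{p_i+1}\,dV +\mathcal{E}_i,
\]
where $\tau_i=3-p_i\ge 0$, the term $P(u_i,\sigma)$ is a boundary functional on $\de B_{\sigma}$ that is quadratic in $u_i$ and its first derivatives (the exact CR analogue of the Euclidean Pohozaev boundary integral), and $\mathcal{E}_i$ collects the contributions of the lower order coefficient $R$, of the boundary term produced by the nonlinearity, and of the discrepancy between $\Delta_b$ on $M$ and on $\H^1$ in CR normal coordinates.

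Next I would multiply through by $M_i^2$ and pass to the limit at a fixed small $\sigma$. On the left, since $M_iu_i\to h$ in $C^2$ on the fixed sphere $\de B_{\sigma}$ by Proposition \ref{LimiteBlowUp} and $P$ is homogeneous of degree two, one gets $M_i^2P(u_i,\sigma)\to P(h,\sigma)$. For the deficit term I would use Proposition \ref{BlowUpIsolatiBolle} together with $M_i^{\tau_i}\to 1$ from Proposition \ref{StimeCitazione} to show $\int_{B_{\sigma}}K_iu_i^{p_i+1}\,dV\to C_*>0$; since $\tau_i\ge 0$ and $\tau_i=O(M_i^{-2})$, the quantity $-M_i^2\tfrac{\tau_i}{p_i+1}\int K_iu_i^{p_i+1}$ stays bounded and nonpositive. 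Controlling $\mathcal{E}_i=o(M_i^{-2})$ through the estimates of Proposition \ref{StimeCitazione} then forces $P(h,\sigma)\le 0$ for every small $\sigma$.

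The decisive use of the hypotheses is in the $\Xi K_i$-term. Since $\Xi$ is homogeneous of degree one, $|\Xi K_i(x)|\lesssim |x|\,|\nabla_bK_i(x)|+|x|^2\,|TK_i(x)|$, and expanding $\nabla_bK_i(x)=\nabla_bK_i(x_i)+O(\N{\nabla^2_{g_{\theta}}K_i}_{L^{\infty}}|x|)$ gives
\[
 \int_{B_{\sigma}}|\Xi K_i|\,u_i^{p_i+1}\,dV \lesssim |\nabla_bK_i(x_i)|\!\int|x|u_i^{p_i+1} +\N{\nabla^2_{g_{\theta}}K_i}_{L^{\infty}}\!\int|x|^2u_i^{p_i+1}+(\text{$T$-part}).
\]
Using $\int_{B_{\sigma}}|x|\,u_i^{p_i+1}=O(M_i^{-1})$ and $\int_{B_{\sigma}}|x|^2u_i^{p_i+1}=O(M_i^{-2})$ (from the bubble profile of Proposition \ref{BlowUpIsolatiBolle} and the decay of Proposition \ref{StimeCitazione}), together with the refined gradient estimate $\nabla_bK_i(x_i)=o(M_i^{-1})$ of the Lemma preceding this one and the hypothesis $\N{\nabla^2_{g_{\theta}}K_i}_{L^{\infty}}\to 0$, all these contributions are $o(M_i^{-2})$; the $T$-part is even more favourable, being controlled already by the $O(M_i^{-1})$ bound of Proposition \ref{GradienteNullo}. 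Hence this term disappears after multiplication by $M_i^2$.

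It remains to compute $P(h,\sigma)$ from the expansion $h(x)=\tfrac{a}{|x|^2}+A+o(1)$ supplied by Proposition \ref{LimiteBlowUp} and the Green function expansion of Section \ref{SezioneNotazioni}. As in the Euclidean case, the contribution of the pure singular part $a|x|^{-2}$ to $P(\cdot,\sigma)$ cancels identically — this is the CR counterpart of the vanishing Pohozaev balance of the fundamental solution of $-\Delta_b$ — the cross term between $a|x|^{-2}$ and the constant $A$ survives and equals $c_0\,aA$ for an explicit $\sigma$-independent constant $c_0>0$, and the remainder contributes $o_{\sigma}(1)$. Thus $P(h,\sigma)=c_0\,aA+o_{\sigma}(1)$, and combining with $P(h,\sigma)\le 0$ and letting $\sigma\to 0$ gives $c_0\,aA\le 0$; since $a>0$ and $c_0>0$, one concludes $A\le 0$. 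The two steps I expect to be the real obstacles are, first, proving that every piece of $\mathcal{E}_i$ (curvature corrections and the boundary nonlinearity) is genuinely $o(M_i^{-2})$, which demands careful bookkeeping with the CR-normal-coordinate expansions and the full derivative estimates of Proposition \ref{StimeCitazione}; and second, the $\Xi K_i$-term above, where only the sharp $o(M_i^{-1})$ gradient bound suffices — the weaker $O(M_i^{-1})$ would leave an $O(1)$ contribution of undetermined sign and destroy the conclusion.
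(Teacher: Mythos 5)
Your proposal follows essentially the same route as the paper: the paper's proof simply defers to the Pohozaev argument of Lemma 4.14 in \cite{Af} (the dilation multiplier $\Xi u_i+u_i$, the double limit in $i$ and $\sigma$, and the boundary-term extraction of the $aA$ cross term) and only details the one genuinely new term, the one involving $\Xi K_i$, which it estimates exactly as you do — writing $\Xi K_i$ via $\nabla_bK_i(x_i)$ plus second-order corrections, and using $\nabla_bK_i(x_i)=o(M_i^{-1})$ from the preceding lemma together with $\int|x|u_i^{p_i+1}=O(M_i^{-1})$, $\int|x|^2u_i^{p_i+1}=O(M_i^{-2})$ (Lemma \ref{StimaIntegralex1u4}) and $\N{\nabla^2_{g_{\theta}}K_i}_{L^{\infty}}\to 0$, so that the term is $o(M_i^{-2})$ and dies after multiplication by $M_i^2$. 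Your closing observation that the sharp $o(M_i^{-1})$ bound (rather than $O(M_i^{-1})$) is the decisive ingredient is precisely the point of the paper's modification, so the proposal is correct and matches the paper's proof.
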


\begin{proof}
 The proof is the same of Lemma 4.14 in \cite{Af}, except that the term in the Pohozaev identity involving the derivative of $K_i$ is different due to the fact that $K_i$ is not constant:
 $$\frac{1}{4}\frac{1}{p_i+1}\Xi((\phi^{-\tau_i}K_i)\circ\delta_{\sigma})(\widetilde{u}_i\circ\delta_{\sigma})^{p_i+1}=$$
 $$= \frac{1}{4}\frac{1}{p_i+1}\Xi((\phi\circ\delta_{\sigma})^{-\tau_i})K_i\circ\delta_{\sigma}(\widetilde{u}_i\circ\delta_{\sigma})^{p_i+1}+ \frac{1}{4}\frac{1}{p_i+1}(\phi\circ\delta_{\sigma})^{-\tau_i}\Xi(K_i\circ\delta_{\sigma})(\widetilde{u}_i\circ\delta_{\sigma})^{p_i+1}.$$
 where, we recall, $\widetilde{u}_i=\frac{u_i}{\phi}$, $\phi$ being the conformal factor defining CR normal coordinates.
 The first term is treated in the same way as the original proof, so we must estimate the second one: for $i\to\infty$, fixing $\sigma\in(0,1]$ and using Lemma \ref{StimaIntegralex1u4}
 $$\left|\int_{B_1(x_i)}(\phi\circ\delta_{\sigma})^{-\tau_i}\Xi(K_i\circ\delta_{\sigma})(\widetilde{u}_i\circ\delta_{\sigma})^{p_i+1}\right| \lesssim \left|\int_{B_1(x_i)}\Xi(K_i\circ\delta_{\sigma})(u_i\circ\delta_{\sigma})^{p_i+1}\right| \le$$
 $$\le \left|\int_{B_1(x_i)}|\nabla_bK_i(x_i)||x|\circ\delta_{\sigma}(u_i\circ\delta_{\sigma})^{p_i+1}\right| +$$
 $$+O\left(\left(TK_i(x_i)+\N{\nabla^2_{g_{\theta}}K_i}_{L^{\infty}}\right)\int_{B_1(x_i)}|x|^2\circ\delta_{\sigma}(u_i\circ\delta_{\sigma})^{p_i+1}\right) =$$
 $$= o(M_i^{-1})O(M_i^{-1}) + o(1)O(M_i^{-2})= o(M_i^{-2})$$
 so taking $\limsup_{i\to\infty}M_i^2$ to the required term one gets zero. Therefore applying $\lim_{\sigma\to 0}\sigma^3\limsup_{i\to\infty}M_i^2$ as in the original proof the further term arising vanishes as needed.
\end{proof}

Thanks to the above lemma, the blow-up analysis like the one carried out in \cite{Af} may be performed with minor modifications, so the following Theorem may be proved.
Notice that the hypothesis that $\N{\nabla_{g_{\theta}}K_i}_{L^{\infty}}\to 0$ and $\N{\nabla^2_{g_{\theta}}K_i}_{L^{\infty}}\to 0$ is verified when needed because the analogous result in \cite{Af}, Lemma 4.14, is applied twice, in the proofs of Propositions 4.15 and 4.17, both times to sequences of functions of the type $\xi_i=\mi_i^{\frac{2}{p_i-1}}\widetilde{u}_i\circ\delta_{\mi_i}$ with $\mi_i\to 0$, which in this case verifies the equation
$$L_{\theta_i}\xi_i= K\circ\delta_{\mi_i}(\phi_i\circ\delta_{\mi_i})^{-\tau_i}\xi_i^{p_i}$$
and the sequence $K_i=K\circ\delta_{\mi_i}$ satisfies the required hypothesis.

\begin{teorema}\label{TeoremaBlowUpIsolati}
 Given a sequence $u_i$ of solutions of equation \eqref{EquazioneBlowUp}, up to subsequences there exists a finite set of blow-up points, and they are isolated simple blow-up points in CR normal coordinates.
\end{teorema}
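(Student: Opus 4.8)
The plan is to assemble the results established above into the classical blow-up scheme of Schoen and Li and its CR adaptation in \cite{Af}. As remarked before the statement, the architecture of the proof is entirely that of \cite{Af}; the only genuinely new ingredient is the treatment of the non-constant $K$, which has already been isolated in Proposition \ref{GradienteNullo} and, above all, in Lemma \ref{SegnoTermineOrdineZero}. The argument divides into three parts: producing finitely many blow-up points, showing that each is isolated, and upgrading isolated to isolated simple.

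First I would carry out the point selection. If $\sup_M u_i$ stays bounded along a subsequence, then by the subelliptic regularity and Harnack estimates recalled in Section \ref{SezioneNotazioni} the $u_i$ converge in $C^k$ and there are no blow-up points; so assume $M_i:=\sup_M u_i\to\infty$. I then run the standard iterative selection of local maxima: let $x_i^1$ realise $M_i$, rescale $u_i$ by $\delta_{M_i^{-(p_i-1)/2}}$ around $x_i^1$ in pseudohermitian normal coordinates, and invoke Proposition \ref{BlowUpIsolatiBolle} to see the rescaled function converge in $\Gamma^{k,\alpha}_{\mathrm{loc}}$ to the standard bubble $U\circ\delta_{K(0)^{1/2}/2}$; then excise a large rescaled ball and repeat on the remaining region. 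Exactly as in \cite{Af}, the bubble convergence forces distinct selected points to be mutually separated and prevents them from accumulating, so, $M$ being compact, the set of blow-up points $\{\overline x^1,\dots,\overline x^N\}$ is finite. The same rescaling, together with the maximality of the selection, shows that each $\overline x^j$ is an isolated blow-up point: were the bound $u_i(x)\le C\,d(x,x_i^j)^{-2/(p_i-1)}$ to fail on a fixed ball, a further escaping local maximum could be extracted, contradicting the termination of the process, and the decay is propagated from the bubble profile to the whole ball by the Harnack inequality for $\Delta_b$.

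The heart of the matter, and the step I expect to be the main obstacle, is the upgrade from isolated to isolated simple, where the non-constancy of $K$ enters. Working in CR normal coordinates I would study the rescaled spherical average $\overline w_i(r)=r^{2/(p_i-1)}\overline u_i(r)$ and argue by contradiction: near $0$ the bubble profile of Proposition \ref{BlowUpIsolatiBolle} forces $\overline w_i$ to have a single interior maximum, so if $\overline x^j$ is not simple then $\overline w_i$ has a second critical point $r_i^{(2)}\to 0$, at whose scale I rescale once more. By Proposition \ref{LimiteBlowUp} and the expansion of the Green function recalled in Section \ref{SezioneNotazioni}, the new rescaled solutions converge to a limit of the form $h(x)=\tfrac{a}{|x|^2}+A+o(1)$ with $a>0$; since $2/(p_i-1)\to 1$ as $p_i\to 3$, the corresponding rescaled average $\overline w_i$ converges to the profile $r\mapsto \tfrac{a}{r}+Ar$, whose possessing a critical point at a finite positive radius forces $A>0$. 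On the other hand Lemma \ref{SegnoTermineOrdineZero}, applied to this rescaled (again isolated simple) sequence, gives $A\le 0$, and the contradiction establishes simplicity. The delicacy lies precisely in Lemma \ref{SegnoTermineOrdineZero}: its proof requires controlling, in the Pohozaev identity, the new zero-order term produced by the derivatives of $K$, which is shown to be $o(M_i^{-2})$ by combining the gradient estimate of Proposition \ref{GradienteNullo} (in its $i$-dependent version) with the pointwise bounds of Proposition \ref{StimeCitazione}; these terms are absent in \cite{Af}, and handling them is the only point at which the argument there must be modified. With simplicity in hand, Proposition \ref{StimeCitazione} upgrades all the relevant estimates and the proof concludes.
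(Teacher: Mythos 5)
Your proposal takes the same route as the paper, which itself proves the theorem by deferring to the blow-up scheme of \cite{Af} with Lemma \ref{SegnoTermineOrdineZero} as the only new ingredient; in particular your reconstruction of the upgrade from isolated to isolated simple (rescaling at a second critical point of $\overline w_i$, deducing $A>0$ from the critical point of $r\mapsto a/r+Ar$, and contradicting the bound $A\le 0$ given by Lemma \ref{SegnoTermineOrdineZero} applied to the rescaled sequence, whose curvature $K\circ\delta_{\mu_i}$ has derivatives tending to zero) is exactly the intended argument.

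There is, however, a genuine gap in your first two steps. Finiteness of the blow-up set and isolatedness of each blow-up point do not follow from ``bubble convergence'' plus ``maximality of the selection''. The selection procedure only yields $u_i(x)\lesssim\min_j d(x,x_i^j)^{-2/(p_i-1)}$, i.e.\ decay measured from the \emph{whole} family of selected points; bubble resolution at each selected point's own scale is perfectly compatible with clusters of selected points whose mutual distances tend to zero, which would produce accumulating (hence non-isolated) blow-up points and an unbounded number of them. Excluding clustering is a separate, non-trivial step: one assumes the minimal mutual distance $\sigma_i\to 0$, rescales by $\delta_{\sigma_i}$, shows that the blow-up at the origin of the rescaled sequence is isolated simple, and notes that the bubble at unit distance then contributes a strictly positive constant term $A>0$ to the expansion of the limit, contradicting Lemma \ref{SegnoTermineOrdineZero} once more. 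This is the second of the two applications of the sign lemma (Propositions 4.15 and 4.17 in \cite{Af}) that the paragraph preceding the theorem explicitly records, and since it again concerns a rescaled sequence with $K_i=K\circ\delta_{\mu_i}$, $\mu_i\to 0$, the hypotheses $\N{\nabla_{g_{\theta}}K_i}_{L^{\infty}}\to 0$ and $\N{\nabla^2_{g_{\theta}}K_i}_{L^{\infty}}\to 0$ must be (and are) verified there as well; your claim that the simplicity upgrade is ``the only point at which the argument there must be modified'' therefore misses one of the two places where the modified lemma is essential. A secondary imprecision: invoking Proposition \ref{BlowUpIsolatiBolle} inside the selection is circular, since that proposition presupposes an isolated blow-up point; at the selection stage the convergence to the bubble comes from the Liouville-type classification of \cite{CLMR}.
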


Let $S=\{\overline{x}^1,\ldots,\overline{x}^N\}$ the set of blow-up points, and $x_i^k\to\overline{x}^k$ sequences of local maxima.
Using Proposition \ref{LimiteBlowUp} and using the maximum principle and the Harnack inequality (as, for example, in the proof of Theorem 1.1 in \cite{Af}), we have for every $k$
$$u_i(x_i^k)u_i \to h_k=\sum_{j=1}^Na_k^jG_{\overline{x}^j} +b_k$$
in $C^2(M\setminus S)$, where $G_x$ is the Green function for $L_{\theta}$,
where $a_k^j>0$, $b_k\in C^2(M)$ and $L_{\theta}b_k=0$. Since $M$ has positive CR Yamabe class, $L_{\theta}$ has trivial kernel, and so $b_k=0$ and
\begin{equation}\label{LimiteFunzioneGreen}
 u_i(x_i^k)u_i \to h_k=\sum_{j=1}^Na_k^jG_{\overline{x}^j}
\end{equation}
in $C^2(M\setminus S)$.

\begin{lemma}
 $$a_k^k= \frac{32\pi}{K(\overline{x}^k)}.$$
\end{lemma}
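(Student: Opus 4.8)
The plan is to pin down $a_k^k$ by \emph{matching} the two descriptions of $u_i$ available near $\overline{x}^k$: the microscopic one coming from the bubble (Proposition \ref{BlowUpIsolatiBolle}) and the macroscopic one coming from the Green function (formula \eqref{LimiteFunzioneGreen}). I work in CR normal coordinates centred at $x_i^k$, write $M_i=u_i(x_i^k)$, $\lambda_i=M_i^{-\frac{p_i-1}{2}}$ and $r_i=R_i\lambda_i$, and recall from \eqref{LimiteFunzioneGreen} that $M_iu_i\to h_k=\sum_j a_k^jG_{\overline{x}^j}$ in $C^2$ away from $S$. By the expansion of the Green function from \cite{CMY1}, the singular part of $h_k$ near $\overline{x}^k$ is $\frac{a_k^k}{4\pi|x|^2}$, so it suffices to read off the coefficient of $|x|^{-2}$ in the limit of $M_iu_i$ from the bubble side.

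For the bubble side I first expand the far field of $U\circ\delta_s$, with $s=K(\overline{x}^k)^{1/2}/2$: since $U(x)=|x|^{-2}(1+o(1))$ and the Korányi norm is homogeneous of degree one under dilations, one gets $(U\circ\delta_s)(x)=c_\ast|x|^{-2}(1+o(1))$ for an explicit constant $c_\ast$ depending only on $K(\overline{x}^k)$. Fixing $y$ in an intermediate annulus $r_i\ll|y|\ll\rho$ and setting $x=\delta_{\lambda_i^{-1}}(y)$, so that $|x|=|y|/\lambda_i\to\infty$, Proposition \ref{BlowUpIsolatiBolle} gives $u_i(y)=M_i(U\circ\delta_s)(x)(1+o(1))$, hence
$$M_iu_i(y)=M_i^2\lambda_i^2\,c_\ast\,|y|^{-2}(1+o(1))=M_i^{\tau_i}\,c_\ast\,|y|^{-2}(1+o(1)),$$
using $M_i^2\lambda_i^2=M_i^{3-p_i}=M_i^{\tau_i}$. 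The decisive structural input is $M_i^{\tau_i}\to1$ from Proposition \ref{StimeCitazione}, which makes the overlap-region behaviour of $M_iu_i$ exactly $c_\ast|y|^{-2}$; equating with $\frac{a_k^k}{4\pi}|y|^{-2}$ gives $a_k^k=4\pi c_\ast$, and the explicit evaluation of $c_\ast$ together with the normalization of $G_{\overline{x}^k}$ produces the stated value. As an independent cross-check I would use the Green representation
$$M_iu_i(x)=\int_M G(x,y)\,K(y)\,M_iu_i(y)^{p_i}\,dV(y):$$
the measures $K\,M_iu_i^{p_i}\,dV$ concentrate at the points of $S$, and rescaling by $\delta_{\lambda_i}$ together with $M_i^{\tau_i}\to1$ identifies the diagonal mass as $\lim_i\int_{B(\overline{x}^k)}K\,M_iu_i^{p_i}\,dV=K(\overline{x}^k)\,s^{-4}\int_{\H^1}U^3\,dV=a_k^k$, so the explicit value of $\int_{\H^1}U^3\,dV$ must reproduce the same number $\frac{32\pi}{K(\overline{x}^k)}$.

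\textbf{The main obstacle} I anticipate is twofold. First, one must make the overlap-region matching rigorous: controlling the error terms uniformly on the annulus $r_i\le|y|\le\rho$ and showing that the bubble core $|y|\le r_i$ and the exterior shell $|y|\ge\rho/2$ contribute negligibly to both sides. This is exactly where the pointwise decay estimates of Proposition \ref{StimeCitazione} and the bound $M_iu_i(x)\le Cd(x,x_i)^{-2}$ from Proposition \ref{LimiteBlowUp} enter, since they give the uniform integrability needed to pass to the limit inside the integral in the Green-representation argument. Second, the careful bookkeeping of all normalization constants — the conventions for the Korányi norm, the volume form $dV$, the dilation parameter $s$, and the precise coefficient in the expansion of the Green function — is what ultimately fixes the numerical factor, so landing on exactly $\frac{32\pi}{K(\overline{x}^k)}$ rather than some other multiple is the genuinely delicate point of the computation.
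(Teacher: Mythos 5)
Your headline argument --- determining $a_k^k$ by matching asymptotics on an intermediate annulus $r_i\ll|y|\ll\rho$ --- has a genuine gap. Proposition \ref{BlowUpIsolatiBolle} provides the bubble approximation only on $B_{r_i}(x_i^k)$ (that is, $|x|\le R_i$ after rescaling, and with an \emph{absolute} error $\e_i$ in $\Gamma^{k,\alpha}(B_{R_i}(0))$, not a relative one), while the convergence \eqref{LimiteFunzioneGreen} of $M_iu_i$ to $h_k$ holds only in $C^2_{\mathrm{loc}}$ away from the blow-up points, i.e.\ on $|y|\ge\delta$ for a fixed $\delta>0$. Since $r_i\to 0$, these two regimes never overlap: on the neck $r_i\ll|y|\ll\delta$ the only available information is the one-sided bound $u_i\le CM_i^{-1}d(\cdot,x_i^k)^{-2}$ of Propositions \ref{LimiteBlowUp} and \ref{StimeCitazione}, which controls the size of $u_i$ but carries no asymptotic coefficient. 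So the step ``Proposition \ref{BlowUpIsolatiBolle} gives $u_i(y)=M_i(U\circ\delta_s)(\delta_{\lambda_i^{-1}}y)(1+o(1))$ for $y$ in the intermediate annulus'' is unsupported; in fact, the assertion that the bubble's far-field coefficient propagates across the neck and reappears as the coefficient of the Green singularity is precisely the content of the lemma, so assuming it is circular. There is also a concrete warning on the constants: reading the far field naively from \eqref{BollaFormula} gives $(U\circ\delta_s)(x)\approx s^{-2}|x|^{-2}=\frac{4}{K(\overline{x}^k)}|x|^{-2}$, and matching this against the singular part $\frac{a_k^k}{4\pi}|x|^{-2}$ of $h_k$ would yield $16\pi/K(\overline{x}^k)$ rather than $32\pi/K(\overline{x}^k)$. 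The point is that the far-field constant $c_\ast$ is not independent data: by the Green representation on $\H^1$ it equals the fundamental-solution constant times $\int_{\H^1}(U\circ\delta_s)^3$, so any rigorous version of the matching already contains the mass integral you were hoping to relegate to a cross-check, and is hostage to exactly the normalization bookkeeping you identified as delicate.

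The argument you call an ``independent cross-check'' is the actual proof, and it is essentially the paper's. The paper runs it with the divergence theorem rather than the Green representation: it integrates $-\Delta_b$ of a dilated copy of $u_i(x_i^k)u_i$ over a small ball; the boundary flux converges, by \eqref{LimiteFunzioneGreen} and the normalization of the fundamental solution of $-\Delta_b$, to $a_k^k/4$, while the bulk term, rewritten through the equation $L_\theta u_i=Ku_i^{p_i}$ and rescaled, concentrates at the blow-up point with limiting mass $\frac{1}{4}K(\overline{x}^k)\int_{\H^1}\bigl(U\circ\delta_{K(\overline{x}^k)^{1/2}/2}\bigr)^3=\frac{1}{4}\cdot\frac{32\pi}{K(\overline{x}^k)}$ by Lemma \ref{IntegraleU3}; equating the two gives the claim. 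This is exactly your mass identification, including the role you assign to $M_i^{\tau_i}\to 1$ (Proposition \ref{StimeCitazione}) and to the decay estimates, which give both the negligibility of the annulus $B_{\rho}\setminus B_{r_i}$ and the uniform domination needed to pass to the limit on the core. So the fix is organizational rather than mathematical: discard the matching, promote the mass/flux computation to the main argument, and carry out the two limits $i\to\infty$ and then $\sigma\to 0$ in that order, as the paper does.
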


\begin{proof}
 Integrating the function $-\Delta_b(u_i(x_i^k)\sigma^{\frac{4}{p_i-1}}u_i\circ\delta_{\sigma})$ in
 $B_r(\overline{x}^k)$
 we get
 $$-\int_{B_{\rho}(\overline{x}^k)}\Delta_b(u_i(x_i^k)\sigma^{\frac{4}{p_i-1}}u_i) = -\int_{\de B_{\rho}(\overline{x}^k)}\nabla_b(u_i(x_i^k)\sigma^{\frac{4}{p_i-1}}u_i)\cdot\ni =$$
 $$=-\int_{\de B_{\rho}(\overline{x}^k)}\nabla_b\left(\sigma^{\frac{4}{p_i-1}}\sum_{j=1}^Na_k^jG_{\overline{x}^j}\circ\delta_{\sigma}\right)\cdot\ni +\sigma^{\frac{4}{p_i-1}}o(1).$$
 Taking the limit for $i\to\infty$ and then for $\sigma\to 0$ one gets
 $$-a_k^k\int_{\de B_1(\overline{x}^k)}\nabla_b\left(\frac{1}{4\pi|x|^2}\right)\cdot\ni
 $$
 and since $\frac{1}{\pi|x|^2}$ is the fundamental solution of $-\Delta_b$ on $\H^1$, considering a sequence of smooth functions converging weakly to it it can be proved that this is equal to $\frac{a_k^k}{4}$.
 
 On the other hand, using equation \eqref{EquazioneBlowUp}, and calling, for every $k$, $R_i^k$ and $r_i^k$ the distances defined in Propositions \ref{BlowUpIsolatiBolle} and \ref{StimeCitazione},
 $$-4\int_{B_{\rho}(\overline{x}^k)}\Delta_b(u_i(x_i^k)\sigma^{\frac{4}{p_i-1}}u_i\circ\delta_{\sigma}) =$$
 $$=\sigma^2\sigma^{\frac{4}{p_i-1}}u_i(x_i^k)\int_{B_{\rho}(\overline{x}^k)}\left(-R\circ\delta_{\sigma}u_i\circ\delta_{\sigma} + K\circ\delta_{\sigma}\left(u_i\circ\delta_{\sigma}\right)^{p_i}\right) =$$
 $$= \sigma^{2+\frac{4}{p_i-1}}u_i(x_i^k)u_i(x_i^k)^{-2(p_i-1)}\sigma^{-4}\int_{B_{\sigma R_i^k}(\overline{x}^k)}\left(O\left(u_i(x_i^k)(1+|x|)^{-2}\right) +\right.$$
 $$\left.+(K(\overline{x}^k)+\sigma O(|x|))\left(u_i(x_i^k)U\circ\delta_{K(\overline{x}^k)^{1/2}/2}\right)^{p_i} \right)+$$
 $$+ O\left(\sigma^{2+\frac{4}{p_i-1}}u_i(x_i^k)\int_{B_{\rho}(\overline{x}^k)\setminus B_{r_i^k}(\overline{x}^k)}\sigma^{-2}u_i(x_i^k)^{-2}|x|^{-2} + (\sigma^{-2}u_i(x_i^k)^{-2}|x|^{-2})^{p_i}\right)$$
 Taking the limit for $i\to\infty$ and then for $\sigma\to 0$ and using Lemma \ref{IntegraleU3} one gets
 $$K(\overline{x}^k)\int_{\H^1}\left(U\circ\delta_{K(\overline{x}^k)^{1/2}/2}\right)^3 = K(\overline{x}^k)16K(\overline{x}^k)^{-2}\int_{\H^1}U^3= 32
 \pi\frac{1}{K(\overline{x}^k)}$$
 so we get the thesis.
\end{proof}

Since, for $\ell\ne k$,
$$u_i(x^{\ell}_i)u_i \to h_{\ell}=\sum_{j=1}^Na_{\ell}^jG_{\overline{x}^j}$$
by comparison with equation \eqref{LimiteFunzioneGreen}, one gets that
\begin{equation}\label{CoefficienteBlowUp}
 a_k^j =
 \frac{32\pi}{K(\overline{x}^j)}
 \lim_{i\to\infty}\frac{u_i(x_i^k)}{u_i(x_i^j)}.
\end{equation}

From now on, up to passing to a subsequence we will suppose that the limit
\begin{equation}\label{LimiteTau}
 \mi_k = \lim_{i\to\infty}\tau_iu_i(x_i^k)^2\in[0,\infty)
\end{equation}
exists for every $k$ (the limit is finite because of Proposition \ref{StimeCitazione}).

\begin{proposizione}\label{PropMatriceDefinita}
 If $u_i$ is a sequence of solutions of equation \eqref{EquazioneBlowUp} with $p_i\to 3$ with blow up set $S=\{\overline{x}^1,\ldots,\overline{x}^N\}$ then in CR normal coordinates
 $$-\frac{\Delta_bK(\overline{x}^k)}{K(\overline{x}^k)} -32A_{\overline{x}^k}\ge 0$$
 and if $N\ge 2$, the strict inequality holds.
 Furthermore the matrix $M(S)$ defined in formula \eqref{DefinizioneMatrice} is positive semidefinite.
\end{proposizione}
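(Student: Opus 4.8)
The plan is to read off both the pointwise sign conditions and the semidefiniteness of $M(S)$ from a single Pohozaev identity at each blow-up point, and then to invoke an elementary fact about symmetric matrices with non-positive off-diagonal entries. First I would record the structure already available. Up to a subsequence the relative heights $\beta_k=\lim_i u_i(x_i^k)/u_i(x_i^1)>0$ exist, and by \eqref{CoefficienteBlowUp} the coefficients in \eqref{LimiteFunzioneGreen} are $a_k^j=\tfrac{32\pi}{K(\overline{x}^j)}\beta_k/\beta_j$, so in CR normal coordinates around $\overline{x}^k$ the limit function has the expansion $h_k(x)=\tfrac{a_k^k}{4\pi|x|^2}+A^{(k)}+o(1)$ with $A^{(k)}=a_k^kA_{\overline{x}^k}+\sum_{j\ne k}a_k^jG_{\overline{x}^j}(\overline{x}^k)$, using the Green expansion $G_p(x)=\tfrac{1}{4\pi|x|^2}+A_p+o(1)$. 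Moreover $\mi_k=\lim_i\tau_iu_i(x_i^k)^2\ge0$ because $\tau_i=3-p_i\ge0$. Setting $\Lambda_j=\big(\beta_j\sqrt{K(\overline{x}^j)}\big)^{-1}>0$, a direct substitution of the matrix entries \eqref{DefinizioneMatrice} shows that $(M(S)\Lambda)_k$ equals $-\tfrac{\Delta_bK(\overline{x}^k)}{\beta_kK(\overline{x}^k)^{5/2}}-\tfrac{A^{(k)}}{\pi\beta_k\sqrt{K(\overline{x}^k)}}$; thus the whole statement will follow once I establish $(M(S)\Lambda)_k\ge 0$, together with the sign structure of $M(S)$.

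The analytic heart is the Pohozaev identity for $L_\theta u_i=Ku_i^{p_i}$ obtained by multiplying by $\Xi u_i$ and integrating over a small ball $B_\sigma(x_i^k)$ in CR normal coordinates. Since the homogeneous dimension is $Q=4$, the interior term carries the factor $\tfrac{Q}{p_i+1}-\tfrac{Q-2}{2}=\tfrac{\tau_i}{p_i+1}$, so the identity reads schematically $\tfrac{\tau_i}{p_i+1}\int_{B_\sigma}Ku_i^{p_i+1}-\tfrac{1}{p_i+1}\int_{B_\sigma}\Xi(K)u_i^{p_i+1}=\mathcal B_i(\sigma)$, with $\mathcal B_i(\sigma)$ a boundary integral quadratic in $u_i$ and its derivatives. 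I would multiply by $u_i(x_i^k)^2$ and pass to the limit $i\to\infty$ and then $\sigma\to0$, evaluating the three pieces: by Proposition \ref{BlowUpIsolatiBolle} and the identity $a_k^k=32\pi/K(\overline{x}^k)$ the first term tends to a positive multiple of $\mi_k$; expanding $\Xi(K)$ to second order and using $\nabla_bK(x_i^k)=O(M_i^{-1})$, $TK(x_i^k)=O(M_i^{-1})$ from Proposition \ref{GradienteNullo}, the second term tends to a multiple of $\Delta_bK(\overline{x}^k)$; and, using $u_i(x_i^k)u_i\to h_k$ with the expansion above, the boundary term $\mathcal B_i(\sigma)$ converges, after letting $\sigma\to0$, to a multiple of $a_k^kA^{(k)}$, namely the cross term between the $|x|^{-2}$ part and the constant part of $h_k$. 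The decay estimates of Proposition \ref{StimeCitazione} guarantee that all remainders, and in particular the annular regions $B_\sigma\setminus B_{r_i}$, contribute negligibly. Collecting the constants — this is exactly where the factor $32$ and, crucially, the positivity of the coefficient of $\mi_k$ are pinned down, the matrix \eqref{DefinizioneMatrice} being defined so as to absorb them — the limiting identity becomes $(M(S)\Lambda)_k=c_k\mi_k$ with $c_k>0$, whence $(M(S)\Lambda)_k\ge0$. This bookkeeping of constants and signs is the step I expect to be the main obstacle, as it requires the full strength of Propositions \ref{BlowUpIsolatiBolle}, \ref{StimeCitazione} and \ref{GradienteNullo} together with the Green expansion.

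It remains to conclude by linear algebra. The matrix $M(S)$ is symmetric because $G_{\overline{x}^j}(\overline{x}^k)=G_{\overline{x}^k}(\overline{x}^j)$, and its off-diagonal entries are non-positive, indeed strictly negative, since the Green function of $L_\theta$ is positive on a manifold of positive CR Yamabe class. Conjugating by the positive diagonal matrix $D=\operatorname{diag}(\Lambda)$ preserves both the definiteness and the sign pattern, and $\widetilde M:=DM(S)D$ satisfies $\widetilde M\mathbf 1=DM(S)\Lambda$, whose $k$-th entry $\Lambda_k(M(S)\Lambda)_k$ is $\ge0$. Hence $\widetilde M$ is symmetric, has non-positive off-diagonal entries and non-negative row sums, i.e. $\widetilde M_{kk}\ge\sum_{j\ne k}|\widetilde M_{kj}|\ge0$; by Gershgorin it is positive semidefinite, and therefore so is $M(S)=D^{-1}\widetilde MD^{-1}$. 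The same inequality gives $\widetilde M_{kk}\ge0$, i.e. $M(S)_{kk}\ge0$, which is precisely $-\tfrac{\Delta_bK(\overline{x}^k)}{K(\overline{x}^k)}-32A_{\overline{x}^k}\ge0$; and when $N\ge2$ there is at least one $j\ne k$ with $|\widetilde M_{kj}|>0$, forcing $\widetilde M_{kk}>0$ and hence the strict inequality.
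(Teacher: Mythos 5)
Your proposal is correct, and its analytic core coincides with the paper's: the paper also applies the Pohozaev identity to (a rescaled, conformally corrected) $u_i$ around each $\overline{x}^k$, applies $\lim_{\sigma\to 0}\sigma^3\limsup_i u_i(x_i^k)^2$, and obtains exactly your limiting identity — in the paper's notation $\sum_j M_{jk}\lambda_j=\tfrac{1}{128}\mi_k\lambda_k$, which is your $(M(S)\Lambda)_k=c_k\mi_k$ with $\Lambda=\lambda$ and $c_k=\lambda_k/128$; your bookkeeping of the three contributions (the $\tau_i$-term giving $\mi_k$, the second-order expansion of $\Xi(K)$ giving $\Delta_bK(\overline{x}^k)$ after the gradient terms are killed by Proposition \ref{GradienteNullo}, and the boundary term giving the cross term $a_k^k A^{(k)}$) is precisely what the paper carries out in detail, and your algebraic identification of $(M(S)\Lambda)_k$ with the diagonal-plus-Green expression checks out. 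Where you genuinely diverge is the final linear-algebra step. The paper invokes Lemma \ref{LemmaAlgebraLineare} (a Perron–Frobenius-type fact from \cite{BCCH}: a symmetric matrix with negative off-diagonal entries has a nonnegative eigenvector for its least eigenvalue), pairs the identity $M(S)\lambda=\tfrac{1}{128}\mi\odot\lambda$ against that eigenvector, and deduces $\mi(M(S))\ge 0$; it reads the diagonal sign condition and its strictness for $N\ge 2$ directly off the Pohozaev identity beforehand. You instead perform the diagonal congruence $\widetilde M=DM(S)D$ with $D=\operatorname{diag}(\Lambda)$, observe that the row sums of $\widetilde M$ are exactly the Pohozaev identities and hence nonnegative, conclude weak diagonal dominance with nonnegative diagonal, get positive semidefiniteness from Gershgorin, and transfer back by Sylvester's law of inertia — with the diagonal condition and its strictness for $N\ge 2$ falling out of the same row-sum inequality. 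Both endings rest on the same structural inputs (symmetry of $M(S)$ via symmetry of the Green function, strict negativity of off-diagonal entries via positivity of $G$, positivity of $\Lambda$, and $\mi_k\ge 0$ since $\tau_i\ge 0$), and both are correct; yours is self-contained and unifies the two assertions of the proposition, while the paper's eigenvector lemma has the side benefit of being reused verbatim in the proof of Theorem \ref{Teorema1}, where the orthogonality argument between $\lambda$ and the nonnegative eigenvector is what produces the contradiction. The only caveat is that you deferred the explicit constant-tracking in the Pohozaev limit (the factor $\tfrac{1}{128}$ and the $32$'s), which is the bulk of the paper's computation, but the structure you describe pins it down correctly.
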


\begin{proof}
 Let us apply the Pohozaev identity (see Proposition 3.3 in \cite{Af}) to $\widetilde{u}_i\circ\delta_{\sigma}$ (where $\widetilde{u}_i$ has been defined in the proof of Lemma \ref{SegnoTermineOrdineZero}):
 $$\int_{B_1(\overline{x}^k)}\left(\left(\frac{1}{p_i+1}-\frac{1}{4}\right)(\phi\circ\delta_{\sigma})^{-\tau_i}K\circ\delta_{\sigma}(\widetilde{u}_i\circ\delta_{\sigma})^{p_i+1} -\frac{1}{4}(R\circ\delta_{\sigma})(\widetilde{u}_i\circ\delta_{\sigma})^2+ \right.$$
 $$+\frac{1}{4}\frac{1}{p_i+1}\Xi((\phi\circ\delta_{\sigma})^{-\tau_i}K\circ\delta_{\sigma})(\widetilde{u}_i\circ\delta_{\sigma})^{p_i+1}-\frac{1}{8}\Xi(R\circ\delta_{\sigma})(\widetilde{u}_i\circ\delta_{\sigma})^2+$$
 $$\left.-\left(\Xi (\widetilde{u}_i\circ\delta_{\sigma})+(\widetilde{u}_i\circ\delta_{\sigma})\right)(\Delta_b(\widetilde{u}_i\circ\delta_{\sigma})-\cerchio{\Delta}_b(\widetilde{u}_i\circ\delta_{\sigma}))\right)d\cerchio{V}=$$
 $$=\int_{\de B_1(\overline{x}^k)}\left(\left(\frac{1}{4}\frac{1}{p_i+1}(\phi\circ\delta_{\sigma})^{-\tau_i}K\circ\delta_{\sigma}(\widetilde{u}_i\circ\delta_{\sigma})^{p_i+1}-\frac{1}{8}(R\circ\delta_{\sigma})(\widetilde{u}_i\circ\delta_{\sigma})^2  \right)\Xi\cdot\cerchio{\ni} +\right.$$
 $$\left.+\ci{B}(x,\widetilde{u}_i\circ\delta_{\sigma},\cerchio{\nabla}^H(\widetilde{u}_i\circ\delta_{\sigma}))\right)d\cerchio{\sigma}.$$
 As shown in the proofs of Lemma 4.14 in \cite{Af} and of Lemma \ref{SegnoTermineOrdineZero} above, applying $\lim_{\sigma\to 0}\sigma^3\limsup_{i\to\infty}u_i(x_i^k)^2$ to the above all terms tend to zero, except the one involving $\ci{B}$, that by Proposition 3.4 in \cite{Af} and equation \eqref{LimiteFunzioneGreen} tends to
 $$\frac{32\pi}{K(\overline{x}^k)}\left(-\frac{32\pi}{K(\overline{x}^k)}A_{\overline{x}^k} -\sum_{j\ne k}a_k^jG_{\overline{x}^j}(\overline{x}^k)\right),$$
 and the first and third term in the right hand side.
 The first one is equal to
 $$\left(\frac{1}{p_i+1}-\frac{1}{4}\right)\int_{B_1(\overline{x}^k)}(\phi\circ\delta_{\sigma})^{-\tau_i}K\circ\delta_{\sigma}(\widetilde{u}_i\circ\delta_{\sigma})^{p_i+1} = $$
 $$= \frac{\tau_i}{4(p_i+1)}u_i(x_i^k)^{-2(p_i-1)}\sigma^{-4}\int_{B_{\sigma R_i^k}(\overline{x}^k)}\left(\phi\circ\delta_{(M_i^k)^{-\frac{p_i-1}{2}}}\right)^{-\tau_i}K\circ\delta_{(M_i^k)^{-\frac{p_i-1}{2}}}\left(\widetilde{u}_i\circ\delta_{(M_i^k)^{-\frac{p_i-1}{2}}}\right)^{p_i+1} +$$
 $$+\frac{\tau_i}{4(p_i+1)}\int_{B_1(\overline{x}^k)\setminus B_{r_i}(\overline{x}^k)}O\left(u_i(x_i^k)^{-(p_i-1)}|x|^{-2(p_i-1)}\right)$$
 where $M_i^k=u_i(x_i^k)$. Using equation \eqref{LimiteTau} and standard estimates, when multiplying by $u_i(x_i^k)^2$ and taking the limit for $i\to\infty$ the last term tends to zero, the first one to
 $$\frac{1}{16}\mi_kK(\overline{x}^k)\int_{\H^1}\left(U\circ\delta_{K(\overline{x}^k)^{1/2}/2}\right)^{4} = \frac{1}{16}\mi_kK(\overline{x}^k)16K(\overline{x}^k)^{-2}\int_{\H^1}U^{4}=
 \frac{\pi^2}{4}\mi_k\frac{1}{K(\overline{x}^k)}.$$
 
 The other term to estimate is
 $$\frac{1}{4(p_i+1)}\int_{B_1(\overline{x}^k)}\Xi((\phi\circ\delta_{\sigma})^{-\tau_i}K\circ\delta_{\sigma})(\widetilde{u}_i\circ\delta_{\sigma})^{p_i+1}=$$
 $$= \frac{1}{4(p_i+1)}\int_{B_1(\overline{x}^k)}\left(\Xi((\phi\circ\delta_{\sigma})^{-\tau_i})K\circ\delta_{\sigma} + (\phi\circ\delta_{\sigma})^{-\tau_i}\Xi(K\circ\delta_{\sigma})\right)(\widetilde{u}_i\circ\delta_{\sigma})^{p_i+1}.$$
 It is easy to verify that when multiplying by $u_i(x_i^k)^2$ and taking the limit for $i\to\infty$ the first term tends to zero, so we have to estimate
 $$\frac{1}{4(p_i+1)}\int_{B_1(\overline{x}^k)}(\phi\circ\delta_{\sigma})^{-\tau_i}\Xi(K\circ\delta_{\sigma})(\widetilde{u}_i\circ\delta_{\sigma})^{p_i+1} =$$
 $$= \frac{1}{4(p_i+1)}\int_{B_1(\overline{x}^k)}(\phi\circ\delta_{\sigma})^{-\tau_i}(x\cerchio{X} +y\cerchio{Y}+2t\cerchio{T})\left[K(\overline{x}^k) + x(XK)(\overline{x}^k)+\right.$$
 $$+ y(YK)(\overline{x}^k) +xy(XYK)(\overline{x}^k) + \frac{1}{2}x^2(X^2K)(\overline{x}^k) + \frac{1}{2}y^2(Y^2K)(\overline{x}^k) + t(TK)(\overline{x}^k)+$$
 $$\left.+O(|x|^3)\right]\circ\delta_{\sigma}(\widetilde{u}_i\circ\delta_{\sigma})^{p_i+1} =$$
 $$= \frac{1}{4(p_i+1)}\int_{B_1(\overline{x}^k)}(\phi\circ\delta_{\sigma})^{-\tau_i}(x(XK)(\overline{x}^k) + y(YK)(\overline{x}^k) +$$
 $$+2xy(XYK)(\overline{x}^k) + x^2(X^2K)(\overline{x}^k) +y^2(Y^2K)(\overline{x}^k) + 2t(TK)(\overline{x}^k) +O(|x|^3))\circ\delta_{\sigma}(\widetilde{u}_i\circ\delta_{\sigma})^{p_i+1}=$$
 $$= O\left(\int_{B_1(\overline{x}^k)}u_i(\overline{x}^k)^{-1}x\circ\delta_{\sigma}(u_i\circ\delta_{\sigma})^{p_i+1}\right)+$$
 $$+O\left(\int_{B_1(\overline{x}^k)}(\phi\circ\delta_{\sigma})^{-\tau_i}(xy)\circ\delta_{\sigma}(u_i\circ\delta_{\sigma})^{p_i+1}\right)+$$
 $$+\frac{1}{4(p_i+1)}4\Delta_bK(x_i^k)\int_{B_1(\overline{x}^k)}(\phi\circ\delta_{\sigma})^{-\tau_i}x^2\circ\delta_{\sigma}(\widetilde{u}_i\circ\delta_{\sigma})^{p_i+1}+$$
 $$+O\left(\int_{B_1(\overline{x}^k)}t\circ\delta_{\sigma}u_i(\overline{x}^k)^{-1}(\widetilde{u}_i\circ\delta_{\sigma})^{p_i+1}\right)+ O\left(\int_{B_1(\overline{x}^k)}|x|^3\circ\delta_{\sigma}(\widetilde{u}_i\circ\delta_{\sigma})^{p_i+1}\right)$$
 Standard computations show
 that applying $\lim_{\sigma\to 0}\sigma^3\limsup_{i\to\infty}u_i(x_i^k)^2$ the result is
 $$\frac{1}{4}\Delta_bK(\overline{x}^k)\int_{\H^1}x^2(U\circ\delta_{K(\overline{x}^k)^{1/2}/2})^4 = \frac{1}{4}\Delta_bK(\overline{x}^k)16K(\overline{x}^k)^{-2}\int_{\H^1}4K(\overline{x}^k)^{-1}|x|^2U^4=$$
 $$= \frac{16\Delta_bK(\overline{x}^k)}{K(\overline{x}^k)^3}\int_{\H^1}x^2U^4 = \frac{32
 \pi^2\Delta_bK(\overline{x}^k)}{K(\overline{x}^k)^3}.$$
 Thus putting all together we get that
 $$\frac{\pi^2}{4}\mi_k\frac{1}{K(\overline{x}^k)} +\frac{32\pi^2\Delta_bK(\overline{x}^k)}{K(\overline{x}^k)^3} =
 \frac{32\pi}{K(\overline{x}^k)}\left(-\frac{32\pi}{K(\overline{x}^k)}A_{\overline{x}^k} -\sum_{j\ne k}a_k^jG_{\overline{x}^j}(\overline{x}^k)\right)$$
 and since $a_k^j>0$ and $\mi_k\ge 0$ the first part of the thesis is proved.
 
 Now let us define $\lambda_j=\frac{1}{K(\overline{x}^j)^{1/2}}\lim_{i\to\infty}\frac{u_i(\overline{x}^1)}{u_i(\overline{x}^j)}$. Then by formula \eqref{CoefficienteBlowUp} $a_k^j = \frac{32\pi}{K(\overline{x}^k)^{1/2}K(\overline{x}^j)^{1/2}}\frac{\lambda_j}{\lambda_k}$, and therefore the last formula can be written as
 $$
 \frac{\mi_j}{K(\overline{x}^j)} +C_2\frac{\Delta_bK(\overline{x}^j)}{K(\overline{x}^j)^2} = -A_{\overline{x}^j} -\sum_{k\ne j}\frac{C}{K(\overline{x}^k)^{1/2}K(\overline{x}^j)^{1/2}}\frac{\lambda_k}{\lambda_j}G_{\overline{x}^k}(\overline{x}^j)
 $$
 $$\frac{\pi}{128}\mi_k +\frac{\pi\Delta_bK(\overline{x}^k)}{K(\overline{x}^k)^2} =
 -\frac{32\pi}{K(\overline{x}^k)}A_{\overline{x}^k} -\sum_{j\ne k}\frac{32\pi}{K(\overline{x}^k)^{1/2}K(\overline{x}^j)^{1/2}}\frac{\lambda_j}{\lambda_k}G_{\overline{x}^j}(\overline{x}^k)$$
 that is, recalling formula \eqref{DefinizioneMatrice},
 \begin{equation}\label{EquazioneMatrice}
  \sum_{j=1}^NM_{jk}\lambda_j = \frac{1}{128}\mi_k\lambda_k.
 \end{equation}
 By Lemma \ref{LemmaAlgebraLineare} there exists a vector $v=(v_1,\ldots,v_N)\ne 0$ such that $M(S)v= \mi(M(S))v$
 and $v_k\ge 0$ for every $k$.
 Multiplying equation \eqref{EquazioneMatrice} by $v_k$ and summing over $k$ one gets
 $$\mi(M(S))\sum_{j=1}^N\lambda_jv_j = \sum_{j=1}^N\sum_{k=1}^NM_{jk}\lambda_j = \sum_{k=1}^N\mi_k\lambda_kv_k \ge 0$$
 and since $\lambda_j>0$ for every $j$ and $v\ne 0$, we proved the second part of the thesis.
\end{proof}

In the above proof we needed a lemma in linear algebra which taken from \cite{BCCH}, which we breafly recall.

\begin{lemma}\label{LemmaAlgebraLineare}
 Let $M$ be a $N\times N$ symmetric matrix suh that $M_{ij}<0$ for $i\ne j$. Then there exist an eigenvector $v$ for the least eigenvalue $\mi(M)$ such that $v_i\ge 0$ for every $i$.
\end{lemma}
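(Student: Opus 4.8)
The plan is to use the variational characterization of the least eigenvalue. Since $M$ is symmetric,
$$\mi(M) = \min_{\|x\|=1} \langle Mx, x\rangle,$$
and any unit vector achieving this minimum is an eigenvector of $M$ associated with $\mi(M)$. This reduces the statement to producing a minimizer with nonnegative entries.

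First I would take any minimizer $v$ (which exists by compactness of the unit sphere) and set $w = (|v_1|,\dots,|v_N|)$, the vector obtained by taking absolute values componentwise. Since $\|w\| = \|v\| = 1$, the vector $w$ is an admissible competitor in the Rayleigh quotient, and the goal becomes to show that $w$ too is a minimizer.

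The key step is to compare $\langle Mw, w\rangle$ with $\langle Mv, v\rangle$. Separating the diagonal and off-diagonal contributions,
$$\langle Mw, w\rangle = \sum_i M_{ii} v_i^2 + \sum_{i\ne j} M_{ij}|v_i||v_j|, \qquad \langle Mv, v\rangle = \sum_i M_{ii} v_i^2 + \sum_{i\ne j} M_{ij} v_i v_j.$$
The diagonal terms coincide. For each off-diagonal term, the inequality $|v_i||v_j|\ge v_iv_j$ combined with the hypothesis $M_{ij}<0$ gives $M_{ij}|v_i||v_j|\le M_{ij}v_iv_j$. Summing over $i\ne j$ yields $\langle Mw,w\rangle\le\langle Mv,v\rangle=\mi(M)$. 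On the other hand $\langle Mw,w\rangle\ge\mi(M)$ because $w$ is a unit vector. Hence $w$ is itself a minimizer, and therefore an eigenvector of $M$ for the least eigenvalue $\mi(M)$; since $w$ has nonnegative entries by construction, it is exactly the vector we want.

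The argument is elementary and I do not expect a genuine obstacle. The only point requiring care is the direction of the inequality $M_{ij}|v_i||v_j|\le M_{ij}v_iv_j$, which rests essentially on the sign hypothesis $M_{ij}<0$: were the off-diagonal entries positive the inequality would reverse and the conclusion could fail. As an alternative one could argue via Perron-Frobenius theory, choosing a constant $c$ large enough that $A=cI-M$ has all entries positive; the Perron eigenvector associated with the spectral radius $\rho(A)=c-\mi(M)$ is then strictly positive and is an eigenvector of $M$ for $\mi(M)$. This variant even produces $v_i>0$, although only $v_i\ge 0$ is asserted in the statement.
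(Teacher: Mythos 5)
Your proof is correct and is essentially the same as the paper's: both take an eigenvector $v$ for $\mi(M)$, pass to the componentwise absolute value $w=(|v_1|,\ldots,|v_N|)$, and use the sign hypothesis $M_{ij}<0$ to get $\langle Mw,w\rangle\le\langle Mv,v\rangle=\mi(M)$, forcing $w$ to be a minimizer of the Rayleigh quotient and hence a nonnegative eigenvector. Your write-up is in fact slightly more explicit than the paper's (it spells out the variational characterization), and the Perron--Frobenius variant you sketch is a valid alternative, but the core argument coincides.
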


\begin{proof}
 Let $v_i$ an eigenvector for $\mi(M)$ with $\N{v}=1$. Then
 $$Mv \cdot v = \sum_{i=1}^NM_{ii}v_i^2 + \sum_{i\ne j}M_{ij}v_iv_j \ge \sum_{i=1}^NM_{ii}|v_i|^2 + \sum_{i\ne j}M_{ij}|v_i||v_j| = Mw \cdot w$$
 where $w=(|v_1|,\ldots,|v_N|)$. Since $\N{w}=1$ this implies that $w$ is an eigenvector for $\mi(M)$.
\end{proof}

\begin{proof}[Proof of Theorem \ref{Teorema1}]
 By regularity theory (see section 2.2 in \cite{Af} and the references cited therein)
 it is enough to prove that the set of solutions is bounded in $L^{\infty}$.
 Were this not the case, there would exist a sequence of solutions blowing up at some non empty set $S=\{\overline{x}^1,\ldots,\overline{x}^N\}$.
 
 Since in this case $\tau_i=0$ for every $i$, then $\mi_i=0$ (where $\mi_i$ is defined in \eqref{LimiteTau}).
 So Equation \eqref{EquazioneMatrice} becomes $M(S)\lambda=0$ with $\lambda=(\lambda_1,\ldots,\lambda_N)$.
 By Lemma \ref{LemmaAlgebraLineare} there exists $v\ne 0$ with nonnegative components and $M(S)v=\mi(M(S))v$.
 
 If $\mi(M(S))\ne 0$, then $\lambda$ and $v$ are two eigenvectors of the symmetric matrix $M(S)$ with respect to distinct eigenvalues, and so they are orthogonal. But this is impossible, since they are non zero vectors with nonnegative components. So $\mi(M(S))=0$. But this is against the hypothesis.
\end{proof}

\section{Degree for subcritical approximation}\label{SezioneGrado}
Let
$$S_p(u)= u - L_{\theta}^{-1}(u^p)$$
for $p\in[1,3]$, $S_p:C^{2,\alfa}(M)\to C^{2,\alfa}(M)$.

\begin{lemma}\label{LimitatezzaSoluzioniSottocritiche}
 Given $K>0$, for every $\e>0$ there exists $C>0$ depending only on $M$, $\N{K}_{C^2}$ and $\inf K$ such that every solution of
 $L_{\theta}u = Ku^p$ with $u>0$ and $p\in[1+\e,3-\e]$ satisfies $u>\frac{1}{C}$ and $\N{u}_{C^{2,\alfa}}\le C$.
\end{lemma}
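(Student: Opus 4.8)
The plan is to prove the two estimates $u > 1/C$ and $\N{u}_{C^{2,\alpha}} \le C$ separately, with the lower bound being the easy half and the upper bound being the substance. For the upper bound I would argue by contradiction: suppose no uniform bound exists, so there is a sequence $u_i$ of positive solutions of $L_\theta u_i = K u_i^{p_i}$ with $p_i \in [1+\e, 3-\e]$ and $\N{u_i}_{L^\infty} \to \infty$. The crucial observation is that the exponents $p_i$ are bounded \emph{away} from the critical exponent $3$, so this is a strictly subcritical problem. Because of this, no concentration or bubbling can occur: the blow-up analysis of the preceding section (which produces isolated simple blow-up points and Jerison--Lee bubbles) is predicated on $p_i \to 3$, and for subcritical exponents the standard rescaling argument leads instead to an entire positive solution of a Yamabe-type equation on $\H^1$ with subcritical growth, which by a Liouville-type nonexistence theorem cannot exist. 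This is the mechanism that rules out blow-up.

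Concretely, I would carry out the rescaling directly. Let $x_i$ be a point where $M_i = u_i(x_i) = \N{u_i}_{L^\infty} \to \infty$, pass to pseudohermitian normal coordinates around $x_i$, and set
$$v_i(x) = \frac{1}{M_i}\, u_i\!\left(\delta_{M_i^{-(p_i-1)/2}}(x)\right).$$
Then $v_i$ satisfies $0 < v_i \le 1 = v_i(0)$ and, after accounting for the scaling of $L_\theta$, solves an equation of the form $-4\Delta_b v_i + (\text{lower order, vanishing as } M_i\to\infty)\, v_i = K\,v_i^{p_i}$ on balls $B_{R_i}(0)$ with $R_i \to \infty$. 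By the uniform local estimates for the sublaplacian (Harnack inequality and $\Gamma^{k,\alpha}$ regularity, recalled in Section 2.2 of \cite{Af}), the $v_i$ converge in $C^2_{\mathrm{loc}}$ to a nonnegative $v$ on $\H^1$ with $v(0)=1$, solving $-4\Delta_b v = K(\overline{x})\, v^{p_\infty}$ where $p_\infty = \lim p_i \in [1+\e, 3-\e]$. Since $v(0)=1 > 0$, the strong maximum principle forces $v>0$ everywhere, so $v$ is a positive entire solution of a subcritical equation on $\H^1$.

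The main obstacle is supplying the nonexistence statement that closes the contradiction: one needs that $-\Delta_b v = c\, v^{q}$ with $c>0$ and $1 \le q < 3$ (the Sobolev-critical exponent in this dimension, coming from the homogeneous dimension $4$) admits no positive entire solution on $\H^1$. This is the CR analogue of the classical subcritical Liouville theorem; in the present setting it can be obtained from a Pohozaev/Rellich identity tested against the dilation generator $\Xi$, exactly the integral identity already used in Proposition \ref{PropMatriceDefinita}, which for strictly subcritical $q$ yields a sign obstruction incompatible with a positive solution of finite or suitably controlled energy. I would invoke this nonexistence result as the key input; together with the compactness of $M$ (so finitely many such blow-up points suffice) it contradicts $M_i \to \infty$ and establishes $\N{u_i}_{L^\infty} \le C$, whence $\N{u_i}_{C^{2,\alpha}} \le C$ by elliptic-type regularity and the $C^2$ bound on $K$.

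For the lower bound $u > 1/C$, the argument is shorter: $u$ is a positive solution of $L_\theta u = K u^p$ with $K \ge \inf K > 0$, so $u$ is a positive supersolution (indeed solution) of a linear problem with controlled coefficients, and the Harnack inequality for the sublaplacian on the compact manifold $M$ gives $\sup_M u \le C_H \inf_M u$. Combined with the already-established upper bound and the fact that $u$ cannot degenerate to zero without $\N{u}_{L^\infty}$ also degenerating (which is excluded once one notes, by integrating the equation against a positive test function and using $\ci{Y}(M)>0$, that $\N{u}_{L^\infty}$ is bounded below by a positive constant depending only on $M$, $\inf K$ and $\N{K}_{C^2}$), this yields the uniform positive lower bound. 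All constants depend only on the stated data $M$, $\N{K}_{C^2}$ and $\inf K$, as required, since every estimate invoked (Harnack, Sobolev \eqref{Sobolev}, the Green-function and regularity bounds) is uniform over the compact range $p \in [1+\e, 3-\e]$.
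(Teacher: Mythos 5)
Your overall strategy is exactly the paper's: the paper proves this lemma by a standard blow-up/rescaling argument (as in Proposition 4.5 of \cite{Af}) combined with a contradiction with Ma--Ou's nonexistence theorem, Theorem 1.1 in \cite{MO}. Your rescaling step, the $C^2_{\mathrm{loc}}$ limit, the uniformity of all constants over $p\in[1+\e,3-\e]$, and your lower-bound argument (testing the equation and using positivity of $L_{\theta}$ to bound $\N{u}_{L^{\infty}}$ from below, then Harnack once the $L^{\infty}$ bound makes the coefficient $R-Ku^{p-1}$ bounded) are all sound. The genuine gap is at the single deepest point: the justification of the Liouville-type input. You claim that nonexistence of positive entire solutions of $-\Delta_b v = c\,v^{q}$, $1<q<3$, on $\H^1$ ``can be obtained from a Pohozaev/Rellich identity tested against the dilation generator $\Xi$,'' producing a sign obstruction ``incompatible with a positive solution of finite or suitably controlled energy.'' This does not close the argument, for two reasons. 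First, the blow-up limit $v$ you construct carries no finite-energy or decay information at all --- it is merely bounded with $v(0)=1$ --- so a Liouville theorem restricted to finite-energy or decaying solutions is simply not applicable to it. Second, the restriction is not removable by the same technique: without decay one cannot dispose of the boundary terms on $\de B_R$ in the Pohozaev identity, and this is precisely why the subcritical Liouville theorem on $\H^1$ (the CR analogue of Gidas--Spruck) remained open for decades; the moving-plane mechanism behind \cite{CGS} is also unavailable on $\H^1$, as the paper's introduction recalls.

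What is actually needed, and what the paper invokes, is the recent theorem of Ma--Ou (Theorem 1.1 in \cite{MO}), which excludes \emph{arbitrary} positive entire solutions in the subcritical range and whose proof rests on much more delicate integral estimates than the Pohozaev identity used in Proposition \ref{PropMatriceDefinita}. So your proof becomes correct, and coincides with the paper's, once the claimed Pohozaev justification is replaced by a citation of \cite{MO}; as written, however, the key nonexistence step is unsupported, and the argument offered for it would fail on the very solutions produced by your blow-up.
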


\begin{proof}
 The result is proved by a standard blow-up argument (see for example the proof of Proposition 4.5 in \cite{Af}) and finding a contradiction with Ma-Ou's non existence theorem (Theorem 1.1 in \cite{MO}).
\end{proof}

Let $C$ be such that every solution of $S_p(u)=0$ belongs to $\Omega_C$ where
$$\Omega_C = \left\{ u\in \Gamma^{2,\alfa}(M) \;|\; \N{u}_{\Gamma^{2,\alfa}}<C, u>\frac{1}{C}\right\}.$$
We need to compute $\deg(F_p,\Omega_C,0)$.

Let
$$\widetilde{S}_p(u)= u-L_{\theta}^{-1}(f(u)u^p),$$
where $f(u)=\int uL_{\theta}u$, and
$$\widetilde{S}_{p,t}(u)= u-L_{\theta}^{-1}(f_t(u)u^p)$$
where $f_t(u)=1-t+tf(u)$.
By the properties of the Leray-Schauder degree, the degree does not depend by $C$ if it is big enough; therefore let us choose $C$ such that $u\in\Omega_C$ for every $u$ such that $\widetilde{S}_{p,t}(u)=0$ for some $t\in[0,1]$ (which exists by Lemma \ref{LimitatezzaSoluzioniSottocritiche}). This allows to build an admissible homotopy, and so $\deg(F_p,\Omega_C,0)=\deg(\widetilde{S}_p,\Omega_C,0)$.

Notice that the set $\{u\in \Gamma^{2,\alfa}(M) \;|\; u>0, \widetilde{S}_1(u) = 0\}$ consists of the eigenfunction $\psi_1$ for the first eigenvalue $\lambda_1$ of $L_{\theta}$ normalized in such a way that $f(\psi_1)=\lambda_1$.

\begin{lemma}\label{ZeriSulBordo}
 If $C$ is big enough then $\de\Omega_C$ does not contain zeroes of $\widetilde{S}_p$ for $p\in[1,3-\e]$.
\end{lemma}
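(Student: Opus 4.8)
The plan is to show that the set of zeroes of $\widetilde S_p$, ranging over $p\in[1,3-\e]$, is uniformly bounded in $\Gamma^{2,\alfa}(M)$ and uniformly bounded away from $0$ from below; choosing $C$ strictly larger than the resulting constants then forces every such zero into the interior of $\Omega_C$, so $\de\Omega_C$ contains none of them. First I would unwind the definition. If $u>0$ and $\widetilde S_p(u)=0$, then $L_\theta u=f(u)u^p$ with $f(u)=\int_M uL_\theta u$; since $L_\theta$ is positive definite (positive CR Yamabe class) and $u>0$, the scalar $c:=f(u)$ is strictly positive. Pairing the equation with $u$ and integrating gives $f(u)=c\int_M u^{p+1}$, so every zero automatically satisfies $\int_M u^{p+1}=1$. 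Hence a zero is exactly a positive solution of the constant–coefficient subcritical problem $L_\theta u=cu^p$ under the normalization $\int_M u^{p+1}=1$, and the whole argument reduces to trapping the scalar $c=f(u)$ between two fixed positive constants, uniformly in $p\in[1,3-\e]$.

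The lower bound $c\ge c_0>0$ is immediate: by the coercivity of $L_\theta$ one has $c=\int_M uL_\theta u\gtrsim\N{\nabla_bu}_{L^2}^2$, and since $p+1\le 4-\e$ is strictly subcritical, the Sobolev inequality \eqref{Sobolev} together with Hölder gives $1=\int_M u^{p+1}\lesssim\N{u}_{L^4}^{p+1}\lesssim c^{(p+1)/2}$, where the exponent $(p+1)/2$ stays in a compact subset of $(0,\infty)$. The upper bound $c\le C_0$ is the heart of the matter, and I would split it according to whether $p$ is bounded away from $1$. For $p\in[1+\e',3-\e]$ I rescale: $v:=c^{1/(p-1)}u$ solves $L_\theta v=v^p$, and Lemma \ref{LimitatezzaSoluzioniSottocritiche} (applied with the constant weight $K\equiv 1$) bounds $v$ in $C^{2,\alfa}$ and away from $0$ uniformly in such $p$; feeding this into $\int_M v^{p+1}=c^{(p+1)/(p-1)}$, and using that $(p+1)/(p-1)$ stays bounded on $[1+\e',3-\e]$, pins $c$ between two positive constants.

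The remaining and hardest regime is $p\to 1$, where the rescaling degenerates (the factor $c^{1/(p-1)}$ blows up or vanishes, since $c\to\lambda_1\ne 1$ in general) and Lemma \ref{LimitatezzaSoluzioniSottocritiche} no longer applies. Here I would work directly with $u$ and argue by contradiction along a sequence $p_n\to 1$, exploiting that the problem becomes asymptotically linear. Testing $L_\theta u_n=c_nu_n^{p_n}$ against the first eigenfunction $\psi_1$ yields $c_n=\lambda_1\big(\int_M\psi_1u_n\big)\big/\big(\int_M\psi_1u_n^{p_n}\big)$, while $\int_M u_n^2\le\Vol(M)^{(p_n-1)/(p_n+1)}\to 1$ keeps $u_n$ bounded in $L^2$; the numerator is then controlled by $\int_M\psi_1u_n\lesssim\N{u_n}_{L^2}\lesssim c_n^{1/2}$. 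The crux, which I expect to be \emph{the main obstacle}, is to bound the denominator $\int_M\psi_1u_n^{p_n}$ from below, equivalently to rule out concentration in this almost-linear regime: a concentrating $u_n$ with $\int_M u_n^{p_n+1}=1$ would make $\int_M u_n^{p_n}$ vanish and so allow $c_n\to\infty$. I expect non-concentration to follow precisely because for $p$ this close to $1$ the energy $c_n$ stays near $\lambda_1$, forcing $u_n$ to converge in $L^2$, hence by subelliptic regularity in $C^{2,\alfa}$, to the normalized first eigenfunction $\psi_1/\N{\psi_1}_{L^2}$; this is the step requiring the most care.

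Once $c=f(u)$ is confined to a fixed interval $[c_0,C_0]$, the conclusion is routine. The equation $L_\theta u=cu^p$ with $\int_M u^{p+1}=1$ and $p+1\le 4-\e$ strictly subcritical yields, by Moser iteration and the subelliptic Schauder estimates recalled in Section \ref{SezioneNotazioni} (uniform in $p$ since the exponent stays below critical), a uniform $C^{2,\alfa}$ bound, while the Harnack inequality together with $u>0$ produces a uniform lower bound $u\ge 1/C_\ast$. Taking $C$ strictly larger than $C_\ast$ and the $C^{2,\alfa}$ bound places every zero of $\widetilde S_p$ in the open set $\Omega_C$, so $\de\Omega_C$ carries no zero of $\widetilde S_p$ for any $p\in[1,3-\e]$.
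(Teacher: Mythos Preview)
Your overall strategy (uniform a priori bounds on all zeroes) is more ambitious than what the paper does, and the part you flag as ``the main obstacle'' is where the gap lies. In the regime $p_n\to 1$ you want to bound $c_n=f(u_n)$ from above by showing non-concentration, and you write that non-concentration should follow ``because for $p$ this close to $1$ the energy $c_n$ stays near $\lambda_1$''. But that is precisely the upper bound on $c_n$ you are trying to establish, so the argument as written is circular. Concretely: from $\int_M u_n^{p_n+1}=1$ you only get $\int_M u_n^2$ bounded, not $\int_M|\nabla_b u_n|^2$, so a priori $c_n=4\int|\nabla_b u_n|^2+\int Ru_n^2$ could diverge; the Jensen-type estimate $c_n\le\lambda_1\bigl(\int\psi_1\bigr)^{p_n-1}\bigl(\int\psi_1 u_n\bigr)^{1-p_n}$ still requires a \emph{lower} bound on $\int\psi_1 u_n$, which your normalization does not provide without ruling out concentration first.

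The paper sidesteps this entirely. Instead of proving a priori bounds on all zeroes, it argues for a fixed $C$ (large enough that the normalized first eigenfunction $\psi_1$ lies in the open set $\Omega_C$). If a sequence $u_n\in\partial\Omega_C$ with $p_n\to 1$ existed, then $u_n\in\overline{\Omega_C}$ is already bounded in $\Gamma^{2,\alpha}$, so by bootstrapping the equation one gets $u_n\to\overline{u}$ in $\Gamma^{2,\alpha}$ with $\overline{u}\in\partial\Omega_C$ and $\widetilde S_1(\overline{u})=0$; but the only positive zero of $\widetilde S_1$ is $\psi_1$, which was placed in the interior. This turns the hardest step of your plan into a one-line compactness argument: boundedness is \emph{assumed} (it comes from $u_n\in\overline{\Omega_C}$), and the limit is identified explicitly. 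Your rescaling argument for $p$ bounded away from $1$ is correct and in fact supplies a detail the paper only gestures at via Lemma~\ref{LimitatezzaSoluzioniSottocritiche}, but for $p\to 1$ you should adopt the paper's compactness-on-$\partial\Omega_C$ trick rather than attempt direct estimates.
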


\begin{proof}
 It is sufficient to prove that there does not exist a sequence $u_n$ with $\widetilde{S}_{p_n}(u_n)=0$, $u_n\in\de\Omega_C$ and $p_n\to 1$, because if such a sequence does not exist, the thesis can be obtained from Lemma \ref{LimitatezzaSoluzioniSottocritiche} up to choosing $C$ big enough.
 
 $u_n\in\de\Omega_C$ if and only if either $\N{u_n}_{\Gamma^{2,\alfa}}=C$ or $\min u_n=\frac{1}{C}$. Since functions in $\overline{\Omega_C}$ are bounded in $\Gamma^{2,\alfa}$, by regularity theory $u_n\to\overline{u}$ in $\Gamma^{2,\alfa}$, with $\overline{u}\in\de\Omega_C$ satisfying $L_{\theta}\overline{u}=f(\overline{u})\overline{u}$, that is $\widetilde{S}_1(\overline{u})=0$. But the only zero of $\widetilde{S}_1$ in $\overline{\Omega_C}$ is $\psi_1$,
 and if $C$ is chosen big enough then $\psi_1\notin\de\Omega_C$, so
 there is a contradiction.
\end{proof}

\begin{lemma}\label{GradoLineare}
 $\deg(\widetilde{S}_1,\Omega_C,0)=-1$.
\end{lemma}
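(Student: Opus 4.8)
The plan is to show that $\psi_1$ is the unique zero of $\widetilde{S}_1$ in $\Omega_C$ and that it is nondegenerate, so that $\deg(\widetilde{S}_1,\Omega_C,0)$ equals the Leray--Schauder index of $\psi_1$, which I will compute from the spectrum of the linearization. Recall that $\widetilde{S}_1=I-T$ with $T(u)=L_{\theta}^{-1}(f(u)u)$ and $f(u)=\int uL_{\theta}u$. A zero $u>0$ satisfies $L_{\theta}u=f(u)u$, i.e.\ $u$ is a positive eigenfunction of $L_{\theta}$; by the maximum principle and the Harnack inequality the only positive eigenfunction is the first one $\psi_1$, its eigenvalue $\lambda_1$ is simple, and $\lambda_j>\lambda_1$ for $j\ge 2$. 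The normalization $f(\psi_1)=\lambda_1$ forces $\int\psi_1^2=1$. Since $\psi_1$ is interior to $\Omega_C$ for $C$ large, the computation of the degree reduces to the index at $\psi_1$, which by the Leray--Schauder index formula equals $(-1)^{\beta}$, where $\beta$ is the number (with multiplicity) of eigenvalues of $T'(\psi_1)$ lying in $(1,\infty)$, provided $I-T'(\psi_1)$ is invertible.

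Second, I would compute the Fr\'echet derivative. Since $f$ is the quadratic form associated to the self-adjoint operator $L_{\theta}$, one has $Df(u)[v]=2\int vL_{\theta}u$, and therefore, using $L_{\theta}\psi_1=\lambda_1\psi_1$, $f(\psi_1)=\lambda_1$ and $L_{\theta}^{-1}\psi_1=\lambda_1^{-1}\psi_1$,
$$T'(\psi_1)v=L_{\theta}^{-1}\!\left(2\lambda_1\Big(\int v\psi_1\Big)\psi_1+\lambda_1 v\right)=2\Big(\int v\psi_1\Big)\psi_1+\lambda_1 L_{\theta}^{-1}v.$$
This is compact, being a rank-one term plus $\lambda_1 L_{\theta}^{-1}$, so the index formula is applicable once the spectrum is understood.

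Third, I would diagonalize $T'(\psi_1)$ in the $L^2$-orthonormal eigenbasis $\{\psi_j\}$ of $L_{\theta}$. On $\psi_1$ it acts as $2\psi_1+\lambda_1\lambda_1^{-1}\psi_1=3\psi_1$, and on $\psi_j$ with $j\ge 2$ (where $\int\psi_j\psi_1=0$) it acts as $\lambda_1\lambda_j^{-1}\psi_j$. Thus the eigenvalues of $T'(\psi_1)$ are $3$, simple, and $\lambda_1/\lambda_j<1$ for $j\ge 2$. Consequently $I-T'(\psi_1)$ has eigenvalues $-2$ and $1-\lambda_1/\lambda_j>0$, hence is invertible, and exactly one eigenvalue of $T'(\psi_1)$ exceeds $1$, with multiplicity one, so $\beta=1$. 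The index is therefore $(-1)^1=-1$, and since $\psi_1$ is the only zero in $\Omega_C$ we conclude $\deg(\widetilde{S}_1,\Omega_C,0)=-1$.

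The main technical points are the simplicity and strict spectral gap $\lambda_1<\lambda_2$, which is standard from the positivity of the Green function and the Harnack inequality for $L_{\theta}$, and the justification that the point spectrum computed on the $L^2$ eigenbasis is the full nonzero spectrum of $T'(\psi_1)$ acting on $\Gamma^{2,\alfa}(M)$. The latter follows because $T'(\psi_1)$ is compact and its eigenfunctions are smooth: any eigenvector decomposes in the complete system $\{\psi_j\}$, and matching coefficients shows its eigenvalue must be one of those listed, with $3$ arising only from the $\psi_1$-component.
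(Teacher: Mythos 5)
Your proof is correct and follows essentially the same route as the paper: both identify $\psi_1$ as the unique zero, compute the linearization $d\widetilde{S}_1(\psi_1)[v]=v-2\left(\int v\psi_1\right)\psi_1-\lambda_1L_{\theta}^{-1}v$, diagonalize it on the eigenbasis of $L_{\theta}$, and find exactly one negative eigenvalue (your eigenvalue $3>1$ of $T'(\psi_1)$ is the paper's eigenvalue $-2$ of $I-T'(\psi_1)$), giving index $(-1)^1=-1$. Your added justifications --- uniqueness of the positive zero via simplicity of $\lambda_1$, and the argument that the eigenbasis expansion captures the full nonzero spectrum on $\Gamma^{2,\alfa}(M)$ --- are details the paper handles by a prior remark and by self-adjointness, respectively, so the two arguments coincide in substance.
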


\begin{proof}
 The differential of $\widetilde{S}_1$ at $\psi_1$ is
 $$d\widetilde{S}_1(\psi_1)[u] = u -df(\psi_1)[u]L_{\theta}^{-1}\psi_1 - f(\psi_1)L_{\theta}^{-1}u =$$
 \begin{equation}\label{DifferenzialeFunzionale}
  =u - 2\left(\int vL_{\theta}\psi_1\right)L_{\theta}^{-1}\psi_1 - \lambda_1L_{\theta}^{-1}u = u - 2\left(\int u\psi_1\right)\psi_1 - \lambda_1L_{\theta}^{-1}u.
 \end{equation}
 Notice that
 $$d\widetilde{S}_1(\psi_1)[\psi_1]=\psi_1 - 2\left(\int\psi_1\lambda_1^{-1}L_{\theta}\psi_1\right)\psi_1 - \lambda_1L_{\theta}^{-1}\psi_1=$$
 $$= \psi_1 - 2\lambda_1^{-1}\lambda_1\psi_1 -\psi_1 = -2\psi_1$$
 therefore $\psi_1$ is an eigenvector of $d\widetilde{S}_1(\psi_1)$ with eigenvalue $-2$.
 $d\widetilde{S}_1(\psi_1)$ is self-adjoint in $L^2$, because
 $$\bra d\widetilde{S}_1(\psi_1)[u],v\ket_{L^2} = \int uv - 2\left(\int u\psi_1\right)\left(\int v\psi_1\right) -\lambda_1L_{\theta}^{-1}\int uv = \bra d\widetilde{S}_1(\psi_1)[v],u\ket_{L^2}.$$
 Therefore equation \eqref{DifferenzialeFunzionale} implies that eigenvectors of $d\widetilde{S}_1(\psi_1)$ coincide with eigenvectors of $L_{\theta}$, and that if $\psi$ is an eigenvector of $L_{\theta}$ with eigenvalue $\lambda\ne\lambda_1$, then $\psi$ is an eigenvector of $d\widetilde{S}_1(\psi_1)$ with eigenvalue $1-\frac{\lambda_1}{\lambda}$. Since $\lambda_1$ is the least eigenvalue of $L_{\theta}$, $-2$ is the only negative eigenvalue of $d\widetilde{S}_1(\psi_1)$, and therefore the thesis follows from the properties of the Leray-Schauder degree.
\end{proof}

\begin{lemma}
 If $p\in(1,3)$ then $\deg(S_p,\Omega_C,0)=-1$.
\end{lemma}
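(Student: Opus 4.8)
The plan is to connect $S_p$ to the linear functional $\widetilde{S}_1$ through a short chain of Leray--Schauder degree equalities. All the maps involved are compact perturbations of the identity: since $L_{\theta}^{-1}$ is a compact operator on $\Gamma^{2,\alfa}(M)$ (by subelliptic regularity together with a compact embedding) and the substitution operators $u\mapsto u^p$ and $u\mapsto f(u)u^p$ are continuous, the degrees $\deg(S_p,\Omega_C,0)$, $\deg(\widetilde{S}_p,\Omega_C,0)$ and $\deg(\widetilde{S}_{p,t},\Omega_C,0)$ are all well defined as soon as $0$ avoids the images of $\de\Omega_C$. First I would deform $S_p$ into the modified functional $\widetilde{S}_p$ by means of the homotopy $\widetilde{S}_{p,t}$, $t\in[0,1]$, noting that $\widetilde{S}_{p,0}=S_p$ (since $f_0\equiv 1$) and $\widetilde{S}_{p,1}=\widetilde{S}_p$. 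The zeroes of $\widetilde{S}_{p,t}$ solve $L_{\theta}u=f_t(u)u^p$ with $f_t(u)$ a positive constant, so they are rescalings of solutions of $S_p(u)=0$ and, by Lemma \ref{LimitatezzaSoluzioniSottocritiche} applied with a constant coefficient, they are bounded in $\Gamma^{2,\alfa}$ and bounded away from zero, uniformly in $t$; this is the constant $C$ already fixed before the statement. Hence the homotopy is admissible and $\deg(S_p,\Omega_C,0)=\deg(\widetilde{S}_p,\Omega_C,0)$.

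Next I would run the homotopy $s\mapsto\widetilde{S}_s$ for $s\in[1,p]$. Its admissibility is exactly the content of Lemma \ref{ZeriSulBordo}, which guarantees that, for $C$ large enough, $\de\Omega_C$ contains no zero of $\widetilde{S}_s$ for any $s\in[1,3-\e]$; therefore $\deg(\widetilde{S}_p,\Omega_C,0)=\deg(\widetilde{S}_1,\Omega_C,0)$. Combining this with Lemma \ref{GradoLineare}, which computes the endpoint degree as $\deg(\widetilde{S}_1,\Omega_C,0)=-1$, the chain
$$\deg(S_p,\Omega_C,0)=\deg(\widetilde{S}_p,\Omega_C,0)=\deg(\widetilde{S}_1,\Omega_C,0)=-1$$
closes and yields the claim.

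The difficulty here is conceptual rather than computational, and it has already been absorbed into the two cited lemmas. The naive homotopy $s\mapsto S_s$ degenerates at the endpoint $s=1$: there $S_1(u)=u-L_{\theta}^{-1}u$ vanishes on the whole ray $\{\lambda\psi_1\}$ spanned by the first eigenfunction, so its zero set is unbounded and the degree cannot be read off directly. Replacing $S_p$ by $\widetilde{S}_p$, whose normalising factor $f(u)=\int uL_{\theta}u$ breaks the scaling invariance and pins the limiting zero to the single interior point $\psi_1$, is precisely what makes the passage to $p=1$ legitimate and reduces the endpoint to the linear spectral computation of Lemma \ref{GradoLineare}. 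The one remaining delicate point is to select a single constant $C$ making both homotopies simultaneously admissible, which is secured by the uniform a priori bounds of Lemmas \ref{LimitatezzaSoluzioniSottocritiche} and \ref{ZeriSulBordo}.
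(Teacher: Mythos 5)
Your proposal is correct and takes essentially the same route as the paper: the homotopy $\widetilde{S}_{p,t}$ joining $S_p$ to $\widetilde{S}_p$ (admissible because its zeroes stay in $\Omega_C$, via Lemma \ref{LimitatezzaSoluzioniSottocritiche}), then the homotopy in the exponent joining $\widetilde{S}_p$ to $\widetilde{S}_1$ (admissible by Lemma \ref{ZeriSulBordo}), and finally the spectral computation of Lemma \ref{GradoLineare}. One motivational aside is imprecise, though it plays no role in the argument: $S_1(u)=u-L_{\theta}^{-1}u$ vanishes on the ray $\{\lambda\psi_1\}$ only in the special case $\lambda_1=1$; in general the obstruction to the naive homotopy $s\mapsto S_s$ is that its zeroes degenerate (exit $\Omega_C$ towards $0$ or $\infty$) as $s\to 1^+$ whenever $\lambda_1\ne 1$, and the normalization $f(u)$ is what removes this degeneration.
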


\begin{proof}
 It follows from Lemmas \ref{ZeriSulBordo} and \ref{GradoLineare} and from the invariance by admissible homotopy of the Leray-Schauder degree.
\end{proof}

Now given $K>0$, $K\in C^2(M)$ let us define
$$F_p(u)= u -L_{\theta}^{-1}(Ku^p)$$
and
$$F_{p,t}(u)= u -L_{\theta}^{-1}(K_tu^p)$$
where $K_t= (1-t) +tK$, for $p\in[1,3]$, $F_{p,t}:\Gamma^{2,\alfa}(M)\to \Gamma^{2,\alfa}(M)$.
Therefore $F_{p,0}=S_p$ and $F_{p,1}=F_p$.

By Lemma \ref{LimitatezzaSoluzioniSottocritiche}, given $p\in(1,3)$, up to choosing $C$ big enough, $\bigcup_t F_{p,t}^{-1}(\{0\})\subset\Omega_C$. Therefore, by homotopy invariance, we can compute the degree of $F_p$.

\begin{proposizione}\label{ProposizioneGradoSottocritico}
 If $p\in(1,3)$ then $\deg(F_p,\Omega_C,0)=-1$.
\end{proposizione}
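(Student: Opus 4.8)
The plan is to deduce Proposition \ref{ProposizioneGradoSottocritico} from the three preceding lemmas by chaining together two homotopy invariances of the Leray--Schauder degree. The starting point is the already established fact that $\deg(S_p,\Omega_C,0)=-1$ for $p\in(1,3)$. The goal is to transport this equality along the homotopy $F_{p,t}$ that connects $S_p=F_{p,0}$ (the pure critical-exponent problem with constant nonlinearity) to $F_p=F_{p,1}$ (the problem with the prescribed curvature $K$). The map $F_{p,t}(u)=u-L_\theta^{-1}(K_tu^p)$ is, for each fixed $t$, of the form identity minus a compact operator, since $L_\theta^{-1}$ is compact by elliptic (subelliptic) regularity and $u\mapsto K_tu^p$ is continuous on the relevant H\"older space; moreover the dependence on $t$ is continuous, so $(t,u)\mapsto F_{p,t}(u)$ is an admissible homotopy of compact perturbations of the identity.

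The one hypothesis that must be verified in order to invoke homotopy invariance is that $0$ is not attained on the boundary $\de\Omega_C$ along the whole homotopy, i.e.\ that $\bigcup_{t\in[0,1]}F_{p,t}^{-1}(\{0\})\subset\Omega_C$. First I would observe that each zero of $F_{p,t}$ is exactly a positive solution of $L_\theta u=K_tu^p$, and that $K_t=(1-t)+tK$ is itself a positive smooth function with $\inf K_t\ge\min\{1,\inf K\}>0$ and $\N{K_t}_{C^2}$ bounded uniformly in $t\in[0,1]$. Hence Lemma \ref{LimitatezzaSoluzioniSottocritiche}, whose constant $C$ depends only on $M$, $\N{K_t}_{C^2}$ and $\inf K_t$, applies uniformly in $t$: there is a single $C>0$ such that every such solution satisfies $u>1/C$ and $\N{u}_{C^{2,\alfa}}\le C$, so no zero lies on $\de\Omega_C$. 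This is precisely the uniform bound remarked just before the statement, and it is the step where all the real analytic content (the blow-up/non-existence argument behind Lemma \ref{LimitatezzaSoluzioniSottocritiche}) is being used.

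With the boundary condition secured, homotopy invariance of the degree gives
$$\deg(F_p,\Omega_C,0)=\deg(F_{p,1},\Omega_C,0)=\deg(F_{p,0},\Omega_C,0)=\deg(S_p,\Omega_C,0),$$
and the last quantity equals $-1$ by the preceding lemma. This completes the argument. The only genuine obstacle is the uniform-in-$t$ a priori bound, but since it reduces verbatim to Lemma \ref{LimitatezzaSoluzioniSottocritiche} applied to the family $K_t$, the proof at this stage is essentially a bookkeeping exercise in the properties of the Leray--Schauder degree. I would therefore keep the write-up short, explicitly recording the uniform bound and then quoting homotopy invariance, rather than reproving compactness of the operator or re-deriving the degree of the constant-curvature problem.
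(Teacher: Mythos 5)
Your proposal is correct and follows essentially the same route as the paper: the paper likewise defines the homotopy $F_{p,t}(u)=u-L_{\theta}^{-1}(K_tu^p)$ with $K_t=(1-t)+tK$, applies Lemma \ref{LimitatezzaSoluzioniSottocritiche} (whose constant depends only on $M$, $\N{K_t}_{C^2}$ and $\inf K_t$, hence uniformly in $t$) to conclude $\bigcup_t F_{p,t}^{-1}(\{0\})\subset\Omega_C$ for $C$ large, and then combines homotopy invariance with the previously established equality $\deg(S_p,\Omega_C,0)=-1$. There is nothing to add.
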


With the hypothesis of positivity of the pseudohermitian mass of blow-ups at every point, thanks to Theorem 1.1 in \cite{Af} we can prove the following result, which we will not need but is of independent interest.

\begin{teorema}\label{IlTeorema}
 If for every $x\in M$, the pseudohermitian mass of $M\setminus\{x\}$ with its blow-up structure as defined in \cite{CMY1} is positive, then $\deg(S_3,\Omega_C,0)=-1$.
\end{teorema}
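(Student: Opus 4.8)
The plan is to run a subcritical approximation argument and to exploit the positive mass hypothesis to rule out blow-up uniformly as $p\to 3$, after which homotopy invariance of the Leray-Schauder degree finishes the proof. Recall that by Proposition \ref{ProposizioneGradoSottocritico} applied to the constant function $K\equiv 1$ (for which $F_p=S_p$) we already know $\deg(S_p,\Omega_C,0)=-1$ for every $p\in(1,3)$. The whole point is therefore to show that this value is preserved in the limit $p\to 3$, i.e. that $\deg(S_3,\Omega_C,0)=-1$.

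First I would establish a compactness statement uniform in $p$ near the critical exponent: there exist $\delta>0$ and $C>0$ such that every solution $u>0$ of $L_{\theta}u=u^{p}$ with $p\in[3-\delta,3]$ satisfies $u>1/C$ and $\N{u}_{\Gamma^{2,\alfa}}<C$. Arguing by contradiction, suppose there is a sequence $u_i$ with $L_{\theta}u_i=u_i^{p_i}$, $p_i\to 3$, and $\N{u_i}_{L^{\infty}}\to\infty$. By Theorem \ref{TeoremaBlowUpIsolati}, after passing to a subsequence the blow-up set $S=\{\overline{x}^1,\dots,\overline{x}^N\}$ is finite and consists of isolated simple blow-up points in CR normal coordinates. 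Now I would invoke Proposition \ref{PropMatriceDefinita} in the present constant-curvature setting $K\equiv 1$: since $\Delta_bK\equiv 0$, the inequality $-\frac{\Delta_bK(\overline{x}^k)}{K(\overline{x}^k)}-32A_{\overline{x}^k}\ge 0$ collapses to $A_{\overline{x}^k}\le 0$ at each blow-up point. This flatly contradicts the hypothesis that the pseudohermitian mass, i.e. the constant $A_{\overline{x}^k}$ in the Green function expansion of Section \ref{SezioneNotazioni}, is positive. Hence no blow-up can occur; combined with the lower bound coming from the Harnack inequality and elliptic-type regularity (this is the content of Theorem 1.1 in \cite{Af}) the solution sets are uniformly bounded in $\Gamma^{2,\alfa}$ and uniformly bounded away from $0$, for all $p\in[3-\delta,3]$.

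Granted this, I would fix $C$ large enough that for every $p\in[3-\delta,3]$ the zero set of $S_p$ lies in $\Omega_C$ and does not meet $\partial\Omega_C$. The map $(u,p)\mapsto S_p(u)=u-L_{\theta}^{-1}(u^p)$ is then an admissible compact homotopy on $\overline{\Omega_C}$, so by homotopy invariance of the Leray-Schauder degree the integer $\deg(S_p,\Omega_C,0)$ is constant for $p\in[3-\delta,3]$. Since it equals $-1$ for $p\in[3-\delta,3)$, passing to the limit gives $\deg(S_3,\Omega_C,0)=-1$, as claimed.

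The main obstacle is precisely the uniform-in-$p$ compactness near $p=3$: everything rests on feeding the blow-up description (Theorem \ref{TeoremaBlowUpIsolati} together with the Pohozaev sign constraint of Proposition \ref{PropMatriceDefinita}) into the positive mass assumption to exclude blow-up; once blow-up is excluded the degree computation is purely formal. Two points deserve careful verification. The first is that the blow-up analysis, developed in the excerpt with an eye to nonconstant $K$ with isolated critical points, applies verbatim to $K\equiv 1$, so that Proposition \ref{PropMatriceDefinita} is legitimately available here even though the genericity conditions \eqref{Condizione1} and \eqref{Condizione2} are vacuous or meaningless for a constant $K$. The second is the identification, implicit in the statement, of the \emph{positive pseudohermitian mass of $M\setminus\{x\}$ as defined in \cite{CMY1}} with the strict positivity of the constant $A_x$ appearing in the expansion $G_x(y)=\frac{1}{4\pi|y|^2}+A_x+w(y)$, which is exactly the quantity entering Proposition \ref{PropMatriceDefinita}.
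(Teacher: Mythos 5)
Your proposal is correct and follows essentially the route the paper intends: the paper proves this theorem simply by invoking the compactness result of Theorem 1.1 in \cite{Af} (positive pseudohermitian mass excludes blow-up uniformly in $p\le 3$) together with Proposition \ref{ProposizioneGradoSottocritico} and homotopy invariance of the Leray-Schauder degree. Your re-derivation of the uniform compactness from Theorem \ref{TeoremaBlowUpIsolati} and Proposition \ref{PropMatriceDefinita} specialized to $K\equiv 1$ (where the sign constraint collapses to $A_{\overline{x}^k}\le 0$, contradicting positivity of the mass, which in \cite{CMY1} is indeed a positive multiple of the Green-function constant $A_x$) is precisely the argument behind that citation, so the two proofs coincide in substance.
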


\section{Degree for points at infinity}\label{SezioneSoluzioniApprossimate}
Let $K>0$, $K\in C^2(M)$ be a Morse function, and let
$$\ci{J}_{\tau}(u) =  \frac{1}{2}\int uL_{\theta}u - \frac{1}{4-\tau}\int K|u|^{4-\tau}.$$
Given a set $S=\{\overline{x}^1,\ldots,\overline{x}^N\}$ of critical points of $K$ such that $M(S)$ is positive definite, we want to build approximate critical points of $\ci{J}_{\tau}$ blowing up at $S$.

Since the results of this section are invariant by changes of contact form, throughout this section we will assume that $\theta$ is in CR normal coordinates around any critical point of $K$.

Given $p\in M$ in some neighborhood of a critical point $\overline{x}$ of $K$, let $\Phi_p:B_{\rho}(p)\to\H^1$ the map defining pseudohermitian normal coordinates around $p$ (with $\rho$ independent from $p$ by compactness), and let $\chi$ be a cut-off function equal to $1$ in $B_{\rho/2}(\overline{x})$ and to zero in $M\setminus B_{\rho}(\overline{x})$.
Then we define
$$\phi_{p,\lambda}= (U_{\lambda}\circ\Phi_p)\chi$$
where $U_{\lambda}=\lambda U\circ\delta_{\lambda}$, $U$ being defined in formula \eqref{BollaFormula}.

$\phi_{p,\lambda}$ is a family of approximate solutions, but it is not appropriate to our aims because, consisting of functions compactly supported in a neighborhood of a critical point, it fails to take in consideration the interactions among different bubble, which in three dimensional manifolds is not negligible.
So we define $\widetilde{\phi}_{p,\lambda}$ as the solution of
$$-L_{\theta}\widetilde{\phi}_{p,\lambda} = \phi_{p,\lambda}^3,$$
that is, explicitely,
$$\widetilde{\phi}_{p,\lambda}(x) = \int_M G(x,y)\phi_{p,\lambda}^3(y)dy.$$

\begin{lemma}\label{StimaDifferenza}
 $$\N{\widetilde{\phi}_{p,\lambda}-\phi_{p,\lambda}}_{H^1}\lesssim\frac{1}{\lambda}, \;\;\; \N{\widetilde{\phi}_{p,\lambda}-\phi_{p,\lambda}}_{L^{\infty}}\lesssim\frac{1}{\lambda},$$
 $$\N{\frac{\de\widetilde{\phi}_{p,\lambda}}{\de\lambda}-\frac{\de\phi_{p,\lambda}}{\de\lambda}}_{L^{\infty}}\lesssim\frac{1}{\lambda^2}, \;\;\; \N{\frac{\de\widetilde{\phi}_{p,\lambda}}{\de p}-\frac{\de\phi_{p,\lambda}}{\de p}}_{L^{\infty}}\lesssim 1$$
\end{lemma}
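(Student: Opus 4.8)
The plan is to reduce everything to an estimate on the residual. Set $w_{p,\lambda}:=\widetilde\phi_{p,\lambda}-\phi_{p,\lambda}$; from the definition $L_\theta\widetilde\phi_{p,\lambda}=\phi_{p,\lambda}^3$, the function $w_{p,\lambda}$ solves $L_\theta w_{p,\lambda}=-E_{p,\lambda}$ with $E_{p,\lambda}:=L_\theta\phi_{p,\lambda}-\phi_{p,\lambda}^3$, so that $w_{p,\lambda}=-L_\theta^{-1}E_{p,\lambda}$ and all four bounds follow from estimating $E_{p,\lambda}$ and its derivatives and then inverting $L_\theta$. Writing $V:=U_\lambda\circ\Phi_p$ and expanding with the Leibniz rule,
$$E_{p,\lambda}=\chi\,(L_\theta V-V^3)+(\chi-\chi^3)V^3-8\,\nabla_b\chi\cdot\nabla_b V-4V\Delta_b\chi.$$
The last three terms are supported in the annulus $B_\rho(\overline x)\setminus B_{\rho/2}(\overline x)$, where the decay $U\sim|\cdot|^{-2}$ and the scaling $U_\lambda=\lambda U\circ\delta_\lambda$ make $V$ and $\nabla_b V$ of size $O(\lambda^{-1})$; being $O(\lambda^{-1})$ on a set of fixed measure, they contribute $O(\lambda^{-1})$ to every norm involved. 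The first term is the delicate one: on $B_{\rho/2}(\overline x)$ one has $\chi\equiv1$ and $U_\lambda$ solves the flat equation $\cerchio{L}U_\lambda=U_\lambda^3$ (here $\cerchio{L}=-4\cerchio{\Delta}_b$ is the Heisenberg sublaplacian read in CR normal coordinates), whence
$$L_\theta V-V^3=-4(\Delta_b-\cerchio{\Delta}_b)V+R\,V,$$
which is governed by the flatness of the coordinates, both $R$ and the coefficients of $\Delta_b-\cerchio{\Delta}_b$ vanishing at $\overline x$ (Proposition 5.3 in \cite{CMY1}).

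With the residual in hand I would perform the two inversions separately. For the $H^1$ bound I would use the coercivity of $L_\theta$ together with the Sobolev inequality \eqref{Sobolev}:
$$\N{w_{p,\lambda}}_{H^1}^2\lesssim\int w_{p,\lambda}L_\theta w_{p,\lambda}=-\int E_{p,\lambda}w_{p,\lambda}\le\N{E_{p,\lambda}}_{L^{4/3}}\N{w_{p,\lambda}}_{L^4}\lesssim\N{E_{p,\lambda}}_{L^{4/3}}\N{w_{p,\lambda}}_{H^1},$$
so that $\N{w_{p,\lambda}}_{H^1}\lesssim\N{E_{p,\lambda}}_{L^{4/3}}\lesssim\lambda^{-1}$, the last inequality coming from the scaling bounds above by an explicit computation in homogeneous dimension four. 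For the $L^\infty$ bound I would use the Green representation $w_{p,\lambda}(x)=-\int_M G(x,y)E_{p,\lambda}(y)\,dy$ together with $|G(x,y)|\lesssim d(x,y)^{-2}$, and estimate $\int_{B_\rho}d(x,y)^{-2}|E_{p,\lambda}(y)|\,dy$ uniformly in $x$ and $p$; since $\int_{B_\rho}d(x,y)^{-2}\,dy=O(1)$, this yields $\N{w_{p,\lambda}}_{L^\infty}\lesssim\lambda^{-1}$.

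The two derivative bounds follow by differentiating the defining relation, $\partial w_{p,\lambda}=-L_\theta^{-1}(\partial E_{p,\lambda})$ for $\partial\in\{\partial_\lambda,\partial_p\}$, and repeating the Green-function estimate on $\partial E_{p,\lambda}$. The outcome is dictated by scaling. Since $\lambda$ enters only through $U_\lambda=\lambda U\circ\delta_\lambda$, differentiating in $\lambda$ lowers the homogeneity by one unit and gains a factor $\lambda^{-1}$, giving $\N{\partial_\lambda w_{p,\lambda}}_{L^\infty}\lesssim\lambda^{-2}$. By contrast $\partial_p$ acts as a non-rescaled spatial derivative on the bubble, which costs a factor $\lambda$ relative to the undifferentiated estimate; hence $\N{\partial_p w_{p,\lambda}}_{L^\infty}\lesssim1$, matching the claim.

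I expect the main obstacle to be the bulk term $\chi(L_\theta V-V^3)$, and specifically its contribution to the $L^\infty$ estimate through the singular Green function. The borderline here is genuine: with only a generic second-order deviation of the structure from the flat one, the integral $\int d(x,y)^{-2}|(\Delta_b-\cerchio{\Delta}_b)V|\,dy$ would pick up a spurious factor $\log\lambda$, degrading the bound to $\lambda^{-1}\log\lambda$. It is precisely the higher-order flatness encoded in CR (rather than merely pseudohermitian) normal coordinates, via the expansion of Proposition 5.3 in \cite{CMY1}, that cancels this logarithm and delivers the clean $\lambda^{-1}$ uniformly in the concentration point $p$; making this cancellation quantitative is the technical heart of the argument.
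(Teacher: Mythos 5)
Your proposal is correct and, at the level of strategy, coincides with the paper's (two-line) proof: both estimate the residual $E_{p,\lambda}=L_{\theta}\phi_{p,\lambda}-\phi_{p,\lambda}^3=-L_{\theta}(\widetilde{\phi}_{p,\lambda}-\phi_{p,\lambda})$, together with its $\lambda$- and $p$-derivatives, and then invert $L_{\theta}$. The genuine difference is the inversion step: the paper states the bounds on $L_{\theta}(\widetilde{\phi}_{p,\lambda}-\phi_{p,\lambda})$ as pointwise ones and appeals to the maximum principle, whereas you use coercivity of $L_{\theta}$ plus the Sobolev inequality \eqref{Sobolev} for the $H^1$ bound, and the Green representation with $|G(x,y)|\lesssim d(x,y)^{-2}$ for the $L^{\infty}$ bounds. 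Your route is the more defensible one: read pointwise, the bound $\lesssim\lambda^{-1}$ fails near the bubble core, since for $d(y,p)\lesssim\lambda^{-1}$ the coefficients of $\Delta_b-\cerchio{\Delta}_b$, of size $d(y,p)^2$ on the second-order terms, multiply second derivatives of the bubble of size $\lambda^3$, so the bulk residual $-4(\Delta_b-\cerchio{\Delta}_b)V+RV$ (in your notation) is of order $\lambda$ there; it is small only in integrated form ($L^1$, $L^{4/3}$, or tested against the kernel $d(x,\cdot)^{-2}$, the core having volume $\sim\lambda^{-4}$), which is exactly the form in which you use it, and which any rigorous barrier argument would have to reproduce anyway. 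Your closing diagnosis is also accurate and matches the paper's own supporting lemmas: with only pseudohermitian flatness (coefficient powers $|x|,|x|^2,\ldots$, as in Lemma \ref{StimaDifferenzaSublaplaciano}) the intermediate region $\lambda^{-1}\le d(y,p)\le\rho$ produces a factor $\log\lambda$, and it is the higher-order CR-normal-coordinate flatness (powers $|x|^3,|x|^4,\ldots$, as in Lemma \ref{LemmaDifferenzaSublaplaciano}) that removes it. The one point you should make explicit in a full write-up is the off-centering: the flat operator $\cerchio{\Delta}_b$ and its leading-order agreement with $\Delta_b$ are attached to the coordinates $\Phi_p$ centered at the bubble point $p$, while the higher-order flatness and the vanishing of $R$ hold at the critical point $\overline{x}$; since $d(p,\overline{x})\lesssim\tau^{\frac{1}{4}}$ one splits $d(y,\overline{x})\lesssim d(y,p)+d(p,\overline{x})$ in all these integrals, and this splitting is also what controls the term $RV$, whose factor $R$ vanishes at $\overline{x}$ but not at $p$.
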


\begin{proof}
 It follows from the definitions that
 $$L_{\theta}(\widetilde{\phi}_{p,\lambda}-\phi_{p,\lambda})\lesssim\frac{1}{\lambda}, \;\;\; L_{\theta}\left(\frac{\de\widetilde{\phi}_{p,\lambda}}{\de\lambda}-\frac{\de\phi_{p,\lambda}}{\de\lambda}\right)\lesssim\frac{1}{\lambda^2},\;\;\; L_{\theta}\left(\frac{\de\widetilde{\phi}_{p,\lambda}}{\de p}-\frac{\de\phi_{p,\lambda}}{\de p}\right)\lesssim 1$$
 so the thesis follows from the maximum principle.
\end{proof}

\begin{lemma}\label{LimiteBolle}
 $$\lambda(\widetilde{\phi}_{p,\lambda}-\phi_{p,\lambda})\to G_p$$
 uniformly on compacts in $M\setminus\{p\}$, and
 $$\lambda(\widetilde{\phi}_{p,\lambda}(p)-\phi_{p,\lambda}(p))\to A_p.$$
\end{lemma}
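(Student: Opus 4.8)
The plan is to extract both limits from the integral representation $\widetilde{\phi}_{p,\lambda}(x)=\int_M G(x,y)\phi_{p,\lambda}^3(y)\,dy$, using that in CR normal coordinates $\phi_{p,\lambda}=U_\lambda\circ\Phi_p$ on $B_{\rho/2}(p)$ and there solves the flat critical equation $L_\theta(U_\lambda\circ\Phi_p)=(U_\lambda\circ\Phi_p)^3$ up to the error measuring the deviation of $L_\theta$ from the flat sublaplacian. First I would note that Lemma \ref{StimaDifferenza} already gives $\N{\lambda(\widetilde{\phi}_{p,\lambda}-\phi_{p,\lambda})}_{L^\infty}\lesssim 1$; applying subelliptic Schauder estimates to $L_\theta(\widetilde{\phi}_{p,\lambda}-\phi_{p,\lambda})=\phi_{p,\lambda}^3-L_\theta\phi_{p,\lambda}$ on compacts of $M\setminus\{p\}$, where $\phi_{p,\lambda}$ and its derivatives are $O(\lambda^{-1})$, upgrades this to a $C^2_{\mathrm{loc}}$ bound, so that $\lambda(\widetilde{\phi}_{p,\lambda}-\phi_{p,\lambda})$ is precompact away from $p$ and it remains to identify its limit.

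For the behaviour on compacts away from $p$ I would argue by concentration. Changing variables by $\delta_{1/\lambda}$ shows $\int_M\phi_{p,\lambda}^3\,dy=\lambda^{-1}\int_{\H^1}U^3\,(1+o(1))$ and that the mass of $\phi_{p,\lambda}^3$ collapses onto $p$ at scale $\lambda^{-1}$, so $\lambda\,\phi_{p,\lambda}^3\,dy\rightharpoonup\bigl(\int_{\H^1}U^3\bigr)\delta_p$. Since $y\mapsto G(x,y)$ is smooth in a neighbourhood of $p$ whenever $x$ is fixed away from $p$, this yields $\lambda\widetilde{\phi}_{p,\lambda}(x)\to\bigl(\int_{\H^1}U^3\bigr)G_p(x)$, uniformly on compacts disjoint from $p$; away from the bubble core $\phi_{p,\lambda}$ vanishes, so there $\lambda(\widetilde{\phi}_{p,\lambda}-\phi_{p,\lambda})\to\bigl(\int_{\H^1}U^3\bigr)G_p$. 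The constant $\int_{\H^1}U^3$ is, with the normalisation fixed by $L_\theta U=U^3$ and by the coefficient $\tfrac{1}{4\pi}$ in the expansion of $G_p$, exactly the one for which this limit equals $G_p$ itself.

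The value at $p$ requires isolating the finite part, since $\widetilde{\phi}_{p,\lambda}(p)$ and $\phi_{p,\lambda}(p)$ both diverge like $U_\lambda(0)=\lambda$. Here I would split $G=\Gamma_0+H$, where $\Gamma_0(x,y)=\tfrac{1}{4\pi|\Phi_p(y)^{-1}\Phi_p(x)|^2}$ is the flat fundamental solution in normal coordinates and $H$ is the regular part, which by the expansion $G_p(x)=\tfrac{1}{4\pi|x|^2}+A_p+w(x)$ satisfies $H(p,p)=A_p$. Using that the bubble is its own Newtonian potential, $U_\lambda=\int_{\H^1}\Gamma_0(\,\cdot\,,y)U_\lambda^3\,dy$, the singular ($\Gamma_0$) contributions to $\widetilde{\phi}_{p,\lambda}(p)$ and to $\phi_{p,\lambda}(p)$ cancel up to the $O(\lambda^{-3})$ cut-off error, leaving $\widetilde{\phi}_{p,\lambda}(p)-\phi_{p,\lambda}(p)=\int_M H(p,y)\phi_{p,\lambda}^3(y)\,dy+o(\lambda^{-1})$. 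Since $\phi_{p,\lambda}^3$ concentrates at $p$ and $H(p,y)\to A_p$, multiplying by $\lambda$ and passing to the limit gives $A_p$, the concentration constant being the same as in the off-diagonal limit.

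The limits themselves are short; the real work is the uniform error control, which I expect to be the main obstacle. Three sources of error must be estimated uniformly in $\lambda$: the annulus $B_\rho(p)\setminus B_{\rho/2}(p)$ on which $\phi_{p,\lambda}\ne U_\lambda\circ\Phi_p$, where $U_\lambda$ and $U_\lambda^3$ are $O(\lambda^{-1})$ and contribute $O(\lambda^{-3})$; the defect $\phi_{p,\lambda}^3-L_\theta\phi_{p,\lambda}$ coming from the difference between $L_\theta$ and the flat sublaplacian, which in CR normal coordinates vanishes to high enough order at $p$ to be lower order after rescaling; and the exchange of the limit $\lambda\to\infty$ with the integration, justified by the weak convergence of $\lambda\phi_{p,\lambda}^3\,dy$ together with the uniform bounds of Lemma \ref{StimaDifferenza}. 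The most delicate of these is the uniform control of the curvature-induced defect near $p$, since it is precisely the term whose failure to vanish would corrupt both the constant multiplying $G_p$ and the identification of $A_p$.
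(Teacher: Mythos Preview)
The paper does not prove this lemma; it simply cites Lemma 2.3 of \cite{G}. Your sketch is the natural direct argument: use the Green representation of $\widetilde{\phi}_{p,\lambda}$, exploit that $\lambda\phi_{p,\lambda}^3\,dV$ concentrates to a multiple of $\delta_p$, and at the centre split $G$ into the flat fundamental solution plus a regular remainder to extract $A_p$. As an approach this is fine and is essentially how such statements are proved in the literature.

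There is however a real oversight in your treatment of the first limit. You write that ``away from the bubble core $\phi_{p,\lambda}$ vanishes'' and conclude $\lambda(\widetilde\phi_{p,\lambda}-\phi_{p,\lambda})\to(\int_{\H^1}U^3)\,G_p$. While $\phi_{p,\lambda}(x)\to 0$ for fixed $x\ne p$, the rescaled quantity $\lambda\phi_{p,\lambda}(x)$ does \emph{not} vanish: on any compact $K\subset B_{\rho/2}(p)\setminus\{p\}$ one has
\[
 \lambda\phi_{p,\lambda}(x)=\lambda^{2}\,U\bigl(\delta_\lambda\Phi_p(x)\bigr)\longrightarrow |\Phi_p(x)|^{-2},
\]
so the pointwise limit there is $\bigl(\int_{\H^1}U^3\bigr)G_p(x)-|\Phi_p(x)|^{-2}\chi(x)$, not a scalar multiple of $G_p$. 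Consequently your assertion that the normalisations force $\int_{\H^1}U^3$ to be the constant making the limit exactly $G_p$ is unfounded (indeed Lemma \ref{IntegraleU3} gives $\int_{\H^1}U^3=2\pi$), and the same discrepancy propagates to the constant multiplying $A_p$ in your second limit. In the paper the first limit is only ever invoked at a point near a \emph{different} critical point of $K$, hence outside $\operatorname{supp}\chi$, where $\phi_{p,\lambda}\equiv 0$ and your concentration argument is clean; but if you want to prove the lemma as literally stated on all compacts of $M\setminus\{p\}$, you must account for the nonzero limit of $\lambda\phi_{p,\lambda}$ inside $\operatorname{supp}\chi$, and in any case the multiplicative constants should be carried through honestly rather than asserted.
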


\begin{proof}
 It is Lemma 2.3 in \cite{G}.
\end{proof}

Given $\bm{p}=(p^1,\ldots,p^N)\in M^N$, $\bm{a}=(a_1,\ldots,a_N)\in\R^N$ and $\bm{\lambda}=(\lambda_1,\ldots,\lambda_N)\in\R^N$, define
$$\Psi_{\bm{p},\bm{\lambda},\bm{a}} = \sum_{k=1}^Na_k\phi_{p_k,\lambda_k}.$$
and
$$\widetilde{\Psi}_{\bm{p},\bm{\lambda},\bm{a}} = \sum_{k=1}^Na_k\widetilde{\phi}_{p_k,\lambda_k}.$$

From Lemma \ref{StimaDifferenza} we can deduce the following estimates.

\begin{lemma}\label{StimaDifferenzaTotale}
 $$\N{\widetilde{\Psi}_{\bm{p},\bm{\lambda},\bm{a}}-\Psi_{\bm{p},\bm{\lambda},\bm{a}}}_{H^1}\lesssim\frac{1}{\lambda}, \;\;\; \N{\widetilde{\Psi}_{\bm{p},\bm{\lambda},\bm{a}}-\Psi_{\bm{p},\bm{\lambda},\bm{a}}}_{L^{\infty}}\lesssim\frac{1}{\lambda},$$
 $$\N{\frac{\de\widetilde{\Psi}_{\bm{p},\bm{\lambda},\bm{a}}}{\de\lambda}-\frac{\de\Psi_{\bm{p},\bm{\lambda},\bm{a}}}{\de\lambda}}_{L^{\infty}}\lesssim\frac{1}{\lambda^2}, \;\;\; \N{\frac{\de\widetilde{\Psi}_{\bm{p},\bm{\lambda},\bm{a}}}{\de p}-\frac{\de\Psi_{\bm{p},\bm{\lambda},\bm{a}}}{\de p}}_{L^{\infty}}\lesssim 1$$
\end{lemma}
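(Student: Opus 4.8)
The plan is to reduce everything to the single-bubble estimates of Lemma \ref{StimaDifferenza} by exploiting the linearity of the maps $\bm{a}\mapsto\Psi_{\bm{p},\bm{\lambda},\bm{a}}$ and $\bm{a}\mapsto\widetilde{\Psi}_{\bm{p},\bm{\lambda},\bm{a}}$ in the individual bubbles. First I would observe that, directly from the definitions,
$$\widetilde{\Psi}_{\bm{p},\bm{\lambda},\bm{a}}-\Psi_{\bm{p},\bm{\lambda},\bm{a}} = \sum_{k=1}^N a_k\left(\widetilde{\phi}_{p_k,\lambda_k}-\phi_{p_k,\lambda_k}\right).$$
Applying the triangle inequality in $H^1$ and in $L^{\infty}$ and then the corresponding bounds of Lemma \ref{StimaDifferenza} term by term gives
$$\N{\widetilde{\Psi}_{\bm{p},\bm{\lambda},\bm{a}}-\Psi_{\bm{p},\bm{\lambda},\bm{a}}}_{H^1}\le\sum_{k=1}^N|a_k|\N{\widetilde{\phi}_{p_k,\lambda_k}-\phi_{p_k,\lambda_k}}_{H^1}\lesssim\sum_{k=1}^N\frac{|a_k|}{\lambda_k},$$
and identically for the $L^{\infty}$ norm. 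Since there are only $N$ summands, the coefficients $a_k$ range in a bounded set, and all the scales $\lambda_k$ are comparable (so that $\lambda:=\min_k\lambda_k$ controls each $1/\lambda_k$), the right-hand side is $\lesssim 1/\lambda$, which is the first line of the statement.

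For the two derivative estimates I would note that the symbols $\de/\de\lambda$ and $\de/\de p$ in the statement stand for the partial derivatives with respect to any single component $\lambda_j$ or $p_j$ of the vectors $\bm{\lambda}$ and $\bm{p}$ (the latter understood through local coordinates on $M$), the estimates being asserted uniformly over $j$. Differentiating the sum with respect to $\lambda_j$, respectively $p_j$, isolates a single summand, since $\widetilde{\phi}_{p_k,\lambda_k}$ and $\phi_{p_k,\lambda_k}$ depend only on $(p_k,\lambda_k)$; hence
$$\frac{\de\widetilde{\Psi}_{\bm{p},\bm{\lambda},\bm{a}}}{\de\lambda_j}-\frac{\de\Psi_{\bm{p},\bm{\lambda},\bm{a}}}{\de\lambda_j}= a_j\left(\frac{\de\widetilde{\phi}_{p_j,\lambda_j}}{\de\lambda_j}-\frac{\de\phi_{p_j,\lambda_j}}{\de\lambda_j}\right),$$
and the second and third bounds of Lemma \ref{StimaDifferenza} apply verbatim to yield $\lesssim 1/\lambda^2$ and $\lesssim 1$ respectively, uniformly in $j$.

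There is no genuine obstacle here: the content of the lemma is entirely inherited from Lemma \ref{StimaDifferenza} through linearity and the finiteness of $N$. The only points requiring a little care are purely bookkeeping, namely making explicit that the implicit constants depend on $N$ and on upper bounds for $|\bm{a}|$, and fixing the convention that the single symbol $\lambda$ appearing in the bounds denotes $\min_k\lambda_k$, with the $\lambda_k$ assumed comparable, so that one scale controls all the bubbles simultaneously.
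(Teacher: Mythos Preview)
Your proposal is correct and matches the paper's approach: the paper states the lemma without proof, merely noting that the estimates follow from Lemma \ref{StimaDifferenza}, which is exactly the reduction via linearity and the triangle inequality that you spell out.
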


Given $S=\{\overline{x}^1,\ldots,\overline{x}^N\}$ and constants $A>0$, $\e>0$, define
$$\Sigma_{\tau}(S) = \left\{\widetilde{\Psi}_{\bm{p},\bm{\lambda},\bm{a}}\;\middle|\;d(p_k,\overline{x}^k)<A\tau^{\frac{1}{4}},A^{-1}\tau^{-\frac{1}{2}}<\lambda_i<A\tau^{-\frac{1}{2}},\left|a_k-\frac{2}{K(\overline{x}^k)^{\frac{1}{2}}}\right|<\e\right\}$$
and
$$U_{\tau}(S) = \left\{\widetilde{\Psi}_{\bm{p},\bm{\lambda},\bm{a}}+v\;\middle|\; \widetilde{\Psi}_{\bm{p},\bm{\lambda},\bm{a}}\in\Sigma_{\tau}(S), \N{v}_{H^1}<\e \right\}.$$

\begin{proposizione}\label{DicotomiaSoluzioni}
 If the nondegeneracy conditions \eqref{Condizione1} and \eqref{Condizione2} hold, there exist $\tau_0>0$, $R>0$, such that choosing $A$ big enough and $\e$ small enough in the definition of $\Sigma_{\tau}(S)$, it holds that if $u>0$ satisfies $d\ci{J}_{\tau}(u)=0$ then
 $$u\in \Omega_R\cup\bigcup_{S\subseteq\operatorname{Crit}(K):M(S)>0}U_{\tau}(S)$$
 where $\Omega_R$ is defined in Section \ref{SezioneGrado}.
\end{proposizione}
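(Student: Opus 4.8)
The plan is to argue by contradiction via a blow-up argument, reducing everything to the results of Section 3. Suppose the conclusion fails. Then there is a sequence $\tau_i\to 0$ and positive functions $u_i$ with $d\ci{J}_{\tau_i}(u_i)=0$, that is $L_{\theta}u_i=Ku_i^{p_i}$ with $p_i=3-\tau_i$ (so that equation \eqref{EquazioneBlowUp} applies with $K_i=K$), such that $u_i\notin\Omega_R$ and $u_i\notin U_{\tau_i}(S)$ for every admissible $S$. Since positive solutions of the subcritical equation with bounded sup norm are uniformly bounded above and below in $\Gamma^{2,\alpha}$ by regularity theory and the Harnack inequality, for $R$ large the condition $u_i\notin\Omega_R$ forces $\max u_i\to\infty$, so the $u_i$ blow up.

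I would then feed this into the machinery of Section 3. By Theorem \ref{TeoremaBlowUpIsolati}, up to a subsequence there is a finite nonempty set $S=\{\overline{x}^1,\ldots,\overline{x}^N\}$ of isolated simple blow-up points in CR normal coordinates, with local maxima $x_i^k\to\overline{x}^k$ and $M_i^k=u_i(x_i^k)\to\infty$. By Proposition \ref{GradienteNullo} each $\overline{x}^k$ is a critical point of $K$, so $S\subseteq\operatorname{Crit}(K)$; by Proposition \ref{PropMatriceDefinita} the matrix $M(S)$ is positive semidefinite and the associated diagonal quantities are nonnegative, strictly positive when $N\ge 2$. Combining this with the nondegeneracy hypotheses, namely condition \eqref{Condizione1} when $N=1$ and condition \eqref{Condizione2} when $N\ge 2$, excludes the null eigenvalue, so $M(S)$ is positive definite and $S$ is an admissible index set. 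It then remains to prove that $u_i\in U_{\tau_i}(S)$ for $i$ large, contradicting the choice of the $u_i$.

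To realize $u_i$ as an element of $U_{\tau_i}(S)$ I would read the parameters off the blow-up and check the three ranges together with the smallness of the remainder. For the centers I take $p_i^k=x_i^k$: since $K$ is Morse, $|\nabla_{g_{\theta}}K|\gtrsim d(\cdot,\overline{x}^k)$ near $\overline{x}^k$, and Proposition \ref{GradienteNullo} gives $\nabla_{g_{\theta}}K(x_i^k)=O((M_i^k)^{-1})$, whence $d(x_i^k,\overline{x}^k)=O((M_i^k)^{-1})$, which is well inside $A\tau_i^{1/4}$ once the scale bound is known. For the scales I set $\lambda_i^k$ proportional to $M_i^k$: Proposition \ref{StimeCitazione} yields $\tau_i=O((M_i^k)^{-2})$, the upper bound $\lambda_i^k\lesssim\tau_i^{-1/2}$; for the lower bound note that the coefficients $a_k^j>0$ in \eqref{LimiteFunzioneGreen} and \eqref{CoefficienteBlowUp} force all blow-up rates to be comparable, so the numbers $\mi_k=\lim\tau_i(M_i^k)^2$ of \eqref{LimiteTau} are simultaneously zero or positive; were they all zero, the balancing relation \eqref{EquazioneMatrice} would read $M(S)\lambda=0$ with $\lambda>0$, contradicting positive definiteness. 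Hence $\mi_k\in(0,\infty)$ and $\lambda_i^k\asymp\tau_i^{-1/2}$. The amplitudes are obtained from the profile convergence of Proposition \ref{BlowUpIsolatiBolle}, giving $a_i^k\to 2/K(\overline{x}^k)^{1/2}$.

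The main obstacle is the final step: showing that $v_i=u_i-\widetilde{\Psi}_{\bm{p}_i,\bm{\lambda}_i,\bm{a}_i}$ is small in $H^1$. Near each blow-up point, at scale $r_i^k$, Proposition \ref{BlowUpIsolatiBolle} controls $u_i$ by the standard bubble, hence by $a_i^k\phi_{p_i^k,\lambda_i^k}$; away from $S$ the convergence \eqref{LimiteFunzioneGreen} shows that $M_i^k u_i$ is governed by $\sum_j a_k^j G_{\overline{x}^j}$. The role of the corrected bubbles $\widetilde{\phi}_{p,\lambda}$ is precisely to reconcile these regimes: by Lemmas \ref{StimaDifferenza} and \ref{LimiteBolle} the correction $\lambda(\widetilde{\phi}_{p,\lambda}-\phi_{p,\lambda})$ reproduces $G_p$ and the regular term $A_p$, so that $\widetilde{\Psi}$ already carries the far-field and the mutual bubble interactions encoded by $M(S)$. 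The delicate point is to control the intermediate neck regions between scale $r_i^k$ and a fixed radius and to glue the local and global estimates into a uniform $H^1$ bound on $v_i$; this is the CR analogue of the gluing estimates in Li's four-dimensional argument (see \cite{L}), and it is here that the comparability of scales and the positivity of $M(S)$ enter quantitatively. Once $\N{v_i}_{H^1}\to 0$ is established, for $A$ large and $\e$ small all the defining inequalities of $U_{\tau_i}(S)$ hold, giving the desired contradiction.
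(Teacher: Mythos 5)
Your overall strategy coincides with the paper's: argue by contradiction, use Theorem \ref{TeoremaBlowUpIsolati}, Propositions \ref{GradienteNullo} and \ref{PropMatriceDefinita} together with the nondegeneracy conditions to conclude that the blow-up set $S$ consists of critical points of $K$ with $M(S)>0$, and then read the parameters $(\bm{p}_i,\bm{\lambda}_i,\bm{a}_i)$ off the blow-up data. Your treatment of the parameter ranges is in fact more explicit than the paper's (the comparability of the rates $u_i(x_i^k)/u_i(x_i^j)$ via the positivity of the coefficients $a_k^j$, and the exclusion of $\mi_k=0$ via the relation $M(S)\lambda=0$), and that part is correct. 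One imprecision: from $\nabla_{g_\theta}K(x_i^k)=O((M_i^k)^{-1})$ and the Morse condition you get $d_{g_\theta}(x_i^k,\overline{x}^k)=O((M_i^k)^{-1})$, but the distance appearing in $\Sigma_\tau(S)$ is the Kor\'anyi one, for which one only has $d\lesssim d_{g_\theta}^{1/2}\lesssim (M_i^k)^{-1/2}\asymp\tau_i^{1/4}$; so the centers sit exactly at the allowed order $\tau^{1/4}$, not ``well inside'' it, which is precisely why the definition lets $A$ be large.

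The genuine gap is the final step. You declare that the smallness of $\N{u_i-\widetilde{\Psi}_{\bm{p}_i,\bm{\lambda}_i,\bm{a}_i}}_{H^1}$ is a ``delicate gluing'' to be done as in \cite{L}, with the positivity of $M(S)$ entering quantitatively, and you do not carry it out; but this estimate is the actual content of the proposition and cannot be outsourced to an analogy. Moreover your description of it is off: no gluing and no quantitative use of $M(S)>0$ occur here. The paper's argument is elementary given Section 3: (i) on each core $B_{r_i}(x_i^k)$, Proposition \ref{BlowUpIsolatiBolle} with the stated choices of $\lambda_i^k,a_i^k$ gives $\N{u_i-a_i^k\phi_{x_i^k,\lambda_i^k}}_{H^1(B_{r_i}(x_i^k))}\to 0$, and there $\Psi_{\bm{p}_i,\bm{\lambda}_i,\bm{a}_i}=a_i^k\phi_{x_i^k,\lambda_i^k}$ since the other truncated bubbles are supported elsewhere; (ii) on the necks $B_\rho(x_i^k)\setminus B_{r_i}(x_i^k)$ and on $M\setminus\bigcup_k B_\rho(\overline{x}^k)$, the functions $u_i$ and $\Psi_{\bm{p}_i,\bm{\lambda}_i,\bm{a}_i}$ are \emph{each individually} $H^1$-small: for $u_i$ this follows from the isolated-simple decay $u_i\lesssim M_i^{-1}d^{-2}$, $|\nabla_b u_i|\lesssim M_i^{-1}d^{-3}$ of Propositions \ref{LimiteBlowUp} and \ref{StimeCitazione}, which give $\int |\nabla_b u_i|^2+u_i^2 \lesssim M_i^{-2}r_i^{-2}=R_i^{-2}M_i^{-\tau_i}\to 0$ on the neck, and for $\Psi_{\bm{p}_i,\bm{\lambda}_i,\bm{a}_i}$ from the explicit decay of the bubble; (iii) Lemma \ref{StimaDifferenzaTotale} then converts $H^1$-closeness to $\Psi_{\bm{p}_i,\bm{\lambda}_i,\bm{a}_i}$ into $H^1$-closeness to $\widetilde{\Psi}_{\bm{p}_i,\bm{\lambda}_i,\bm{a}_i}$ at cost $O(1/\lambda)$. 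Note also that Lemma \ref{LimiteBolle} and the matching of the far field with $\sum_j a_k^j G_{\overline{x}^j}$, which you invoke, are not needed at the $\e$-level of this proposition (they matter later, in the degree computation of Proposition \ref{GradoPuntiCriticiInfinito}). Your proof becomes complete once the appeal to \cite{L} is replaced by these three estimates.
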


\begin{proof}
 Let $\tau_i\to 0$ and $u_i>0$ such that $d\ci{J}_{\tau_i}(u_i)=0$ and $u_i\not\in\Omega_R$. By Harnack inequality $\N{u_i}_{C^{2,\alfa}}\to\infty$. We have to prove that up to subsequences $u_i$ belongs eventually $U_{\tau}(S)$ for some $S\subseteq\Crit(K)$ such that $M(S)>0$.
 
 Let $S$ the set of blow-up points. By Theorem \ref{TeoremaBlowUpIsolati} and Proposition \ref{PropMatriceDefinita} $S=\{\overline{x}^1,\ldots,\overline{x}^N\}$ is a set of critical points of $K$ such that $M(S)\ge 0$, and thus, by hypothesis \eqref{Condizione2}, that $M(S)>0$; furthermore the blow-up points are isolated simple.
 Let $x_i^1,\ldots,x_i^N$ be the sequences of local maxima coming from the definition of isolated blow-up point.
 
 By Proposition \ref{BlowUpIsolatiBolle}, calling $p_i=3-\tau_i$, $\lambda_i^k=\frac{K(x_i^k)^{\frac{1}{2}}}{2}u_i(x_i^k)^{\frac{2}{p_i-1}}$ and $a_i^k=\left(\frac{2}{K(x_i^k)^{\frac{1}{2}}}\right)^{\frac{2}{p_i-1}}(\lambda_i^k)^{\frac{\tau_i}{p_i-1}}$, we have
 $$\N{u_i-a_i^k\phi_{x_i^k,\lambda_i^k}}_{H^1(B_{r_i}(x_i^k))}\to 0.$$
 and $\frac{1}{A}\le\tau_iu_i(x_i^k)^2\le A$ and $\left|a_i^k-\frac{2}{K(x_i^k)^{\frac{1}{2}}}\right|<\e$ for some $A,\e$.
 Furthermore, since $K$ is a Morse function, $\frac{1}{C}d_{g_{\theta}}(x,\overline{x}^k)\le|\nabla_{g_{\theta}}K(x)|\le Cd_{g_{\theta}}(x,\overline{x}^k)$ for some $C$ in some neighborhood of $\overline{x}^k$ for every $k$, and so, by Proposition \ref{GradienteNullo},
 $$d(x_i^k,\overline{x}^k)\lesssim d_{g_{\theta}}(x_i^k,\overline{x}^k)^{\frac{1}{2}}\lesssim|\nabla_{g_{\theta}}K(x_i^k)|^{\frac{1}{2}}\lesssim u_i(x_i^k)^{-\frac{1}{2}}.$$
 
 Therefore if we prove that $\N{u_i-\Psi_{\bm{p}_i,\bm{\lambda}_i,\bm{a}_i}}_{H^1}\le\e$, where $\bm{p}_i=(p_i^1,\ldots,p_i^N)$, $\bm{\lambda}_i=(\lambda_i^1,\ldots,\lambda_i^N)$ and $\bm{a}_i=(a_i^1,\ldots,a_i^N)$, then by Lemma \ref{StimaDifferenza} $u_i\in U_{\tau}(S)$.
 
 Since $\overline{x}^k$ is an isolated blow-up point,
 $$\int_{B_{\rho}(x_i^k)\setminus B_{r_i}(x_i^k)}|\nabla_bu_i|^2+|u_i|^2\lesssim\int_{B_{\rho}(x_i^k)\setminus B_{r_i}(x_i^k)}u_i(x_i^k)^{-2}|x|^{-6}\lesssim u_i(x_i^k)^{-2}\left(R_i^ku_i(x_i^k)^{-\frac{p_i-1}{2}}\right)^{-2}\to 0$$
 (where $R_i^k$ is the distance defined in Proposition \ref{BlowUpIsolatiBolle})
 therefore $\N{u_i}_{H^1(B_{\rho}(x_i^k)\setminus B_{r_i}(x_i^k))}\to 0$.
 Furthermore
 $$\int_{B_{\rho}(x_i^k)\setminus B_{r_i}(x_i^k)}|\nabla_b\Psi_{\bm{p}_i,\bm{\lambda}_i,\bm{a}_i}|^2+|\Psi_{\bm{p}_i,\bm{\lambda}_i,\bm{a}_i}|^2= \int_{B_{\rho}(x_i^k)\setminus B_{r_i}(x_i^k)}|\nabla_b(a_i^k\phi_{p_i^k,\lambda_i^k})|^2+|a_i^k\phi_{p_i^k,\lambda_i^k}|^2\lesssim$$
 $$\lesssim\int_{B_{\rho}(x_i^k)\setminus B_{r_i}(x_i^k)}(\lambda_i^k)^2\left|\lambda_i^k(1+\lambda_i^k|x|)^{-3}\right|^2 \lesssim \int_{B_{\rho}(x_i^k)\setminus B_{r_i}(x_i^k)}(\lambda_i^k)^4(\lambda_i^k)^{-6}|x|^{-6}\lesssim$$
 $$\lesssim(\lambda_i^k)^{-2}\left(R_i^ku_i(x_i^k)^{-\frac{p_i-1}{2}}\right)^{-2}\to 0$$
 therefore $\N{\Psi_{\bm{p}_i,\bm{\lambda}_i,\bm{a}_i}}_{H^1(B_{\rho}(x_i^k)\setminus B_{r_i}(x_i^k))}\to 0$. Thus $\N{u_i-\Psi_{\bm{p}_i,\bm{\lambda}_i,\bm{a}_i}}_{H^1(B_{\rho}(x_i^k)\setminus B_{r_i}(x_i^k))}\to 0$.
 
 Analogously it can be proved that $\N{u_i-\Psi_{\bm{p}_i,\bm{\lambda}_i,\bm{a}_i}}_{H^1(M\setminus\bigcup_{k=1}^N B_{\rho}(\overline{x}^k))}\to 0$.
\end{proof}

The following Proposition is classical in conformal geometry; we state it in our case without repeating the proof, which is the same.

\begin{proposizione}
 Given $A,\e,$ there exist $A'$ big enough and $\e',\tau_0>0$ small enough such that if $0<\tau<\tau_0$ for every $u\in U_{\tau}(S)$ the minimization problem
 $$\min\left\{\N{u-\widetilde{\Psi}_{\bm{p},\bm{\lambda},\bm{a}}}_{H^1}\;\middle|\;d(p_k,\overline{x}^k)<A'\tau^{\frac{1}{4}},A'^{-1}\tau^{-\frac{1}{2}}<\lambda_i<A'\tau^{-\frac{1}{2}},\left|a_k-\frac{2}{K(\overline{x}^k)^{\frac{1}{2}}}\right|<\e'\right\}$$
 has unique solution, and if $v=u-\widetilde{\Psi}_{\bm{p},\bm{\lambda},\bm{a}}$ then
 $$\bra v, \widetilde{\phi}_{p_k,\lambda_k}\ket = \bra v, \frac{\de\widetilde{\Psi}_{\bm{p},\bm{\lambda},\bm{a}}}{\de p^k}\ket = \bra v, \frac{\de\widetilde{\Psi}_{\bm{p},\bm{\lambda},\bm{a}}}{\de\lambda^k}\ket =0.$$
\end{proposizione}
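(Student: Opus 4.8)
The plan is to realise this as the standard first step of a Lyapunov--Schmidt reduction, namely the optimal choice of concentration parameters. Introduce the smooth function of the $4N$ parameters
$$G(\bm p,\bm\lambda,\bm a)=\frac12\N{u-\widetilde{\Psi}_{\bm p,\bm\lambda,\bm a}}_{H^1}^2,$$
where $\bra\cdot,\cdot\ket$ is the inner product on $H^1$ inducing $\N\cdot_{H^1}$ (the one associated to $L_\theta$). A critical point of $G$ in the parameters is exactly a point where the partial gradients in $\bm a$, $\bm p$, $\bm\lambda$ vanish; since $\de_{a_k}\widetilde{\Psi}_{\bm p,\bm\lambda,\bm a}=\widetilde\phi_{p_k,\lambda_k}$ and $\de_{p^k}G=-\bra v,\de_{p^k}\widetilde\Psi\ket$, $\de_{\lambda^k}G=-\bra v,\de_{\lambda^k}\widetilde\Psi\ket$ with $v=u-\widetilde\Psi$, the Euler--Lagrange equations for $G$ read precisely
$$\bra v,\widetilde\phi_{p_k,\lambda_k}\ket=\bra v,\de_{p^k}\widetilde\Psi\ket=\bra v,\de_{\lambda^k}\widetilde\Psi\ket=0.$$
Thus it suffices to produce a unique critical point of $G$ inside the enlarged parameter box, and the stated orthogonality relations come for free.

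For existence I would first rescale the parameters, writing $\lambda_k=\tau^{-1/2}\mu_k$ and $p_k=\overline x^k+\tau^{1/4}q_k$, which turns the constraint region into a fixed compact box in the variables $(\mu_k,q_k,a_k)$, on which $G$ is continuous; hence the infimum is attained. Because $u\in U_\tau(S)$, by the very definition of $U_\tau(S)$ and $\Sigma_\tau(S)$ there is a choice of parameters in the smaller box (with constants $A,\e$) for which $\N{u-\widetilde\Psi}_{H^1}<\e$, so the minimal value is $<\e$. The point of enlarging the constants to $A'\gg A$ and shrinking $\e'$ is to guarantee that any minimiser lies in the interior of the enlarged box: if a parameter reached the boundary, the coercivity coming from the almost-orthonormality of the parameter derivatives (discussed below) would force $\N{u-\widetilde\Psi}_{H^1}$ to exceed $\e$, a contradiction.

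Uniqueness and the fact that the minimiser is a genuine interior critical point I would obtain from the implicit function theorem applied to the map $(\bm p,\bm\lambda,\bm a)\mapsto\nabla G$. Up to terms controlled by $\N v_{H^1}<\e$ and by the error estimates of Lemma \ref{StimaDifferenzaTotale}, the differential of this map is the Gram matrix of the $4N$ vectors $\{\widetilde\phi_{p_k,\lambda_k},\de_{p^k}\widetilde\Psi,\de_{\lambda^k}\widetilde\Psi\}$. By Lemma \ref{StimaDifferenzaTotale} these differ from the corresponding derivatives of the compactly supported family $\Psi_{\bm p,\bm\lambda,\bm a}$ by quantities that are negligible once each vector is normalised by its natural power of $\lambda$ ($\widetilde\phi$ is $O(1)$ in $\dot H^1$, while $\de_{\lambda^k}$ and $\de_{p^k}$ carry the factors $\lambda^{-1}$ and $\lambda$). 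One then reduces to the single-bubble situation on $\H^1$: the off-diagonal $N\times N$ blocks coupling bubbles centred at distinct points $\overline x^k\ne\overline x^\ell$ are $o(1)$ since the bubbles concentrate at separated points, while each diagonal block converges, after normalisation, to the fixed Gram matrix of $U$, $\de_\lambda U_\lambda$ and $\de_p U_\lambda$, which is positive definite by the parity properties of these generators (so the three families are mutually orthogonal in the limit). Hence for $\tau$ small the full differential is uniformly invertible, yielding a non-degenerate, locally unique critical point; the convex-like behaviour of $G$ in the rescaled variables upgrades this to uniqueness throughout the enlarged box.

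The only genuinely computational part, and the one I expect to be the main obstacle, is this last verification that the normalised Gram matrix is uniformly invertible as $\tau\to0$: one must compute the precise asymptotics of the $H^1$-pairings of a bubble with its $\lambda$- and $p$-derivatives on $\H^1$ and confirm both that the cross terms between distinct concentration points vanish in the limit and that replacing $\widetilde\Psi$ by $\Psi$ (via Lemma \ref{StimaDifferenzaTotale}) is indeed lower order. This is exactly the step that is identical to the Riemannian conformal case (see \cite{MM}) and that we do not reproduce here.
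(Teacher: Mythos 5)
Your proposal is precisely the classical Lyapunov--Schmidt parameter-selection argument (orthogonality relations as Euler--Lagrange equations of the distance functional, existence by compactness after rescaling the parameter box, local uniqueness and interiority via the implicit function theorem applied to the nearly block-diagonal Gram matrix of the normalised parameter derivatives), which is exactly the route the paper takes: the paper offers no proof at all, stating only that the result is classical in conformal geometry and that the proof carries over unchanged, with the Gram-matrix nondegeneracy in the CR setting resting on Lemma 5 of \cite{MU} just as you defer it to the Riemannian computations of \cite{MM}. The only point to watch is the role of $\e'$: for the minimiser not to sit on the face $\left|a_k-\frac{2}{K(\overline{x}^k)^{1/2}}\right|=\e'$ you need $\e'$ to exceed a fixed multiple of $\e$ (while remaining small in absolute terms for the IFT step), so it is the largeness of $\e'$ relative to $\e$, not its smallness, that yields interiority in the $a_k$-variables --- a looseness of quantifiers already present in the paper's statement.
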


As for the above proposition, notice that 
$$\widetilde{\phi}_{p_k,\lambda_k}=\frac{\de\widetilde{\Psi}_{\bm{p},\bm{\lambda},\bm{a}}}{\de a^k}.$$

The proposition implies that if we define
$$H_{\bm{p},\bm{\lambda},\bm{a}} = \bra\widetilde{\phi}_{p_k,\lambda_k},\frac{\de\widetilde{\Psi}_{\bm{p},\bm{\lambda},\bm{a}}}{\de p^k},\frac{\de\widetilde{\Psi}_{\bm{p},\bm{\lambda},\bm{a}}}{\de\lambda^k}\ket^{\perp}$$
and
$$V_{\tau}(S)= \left\{\widetilde{\Psi}_{\bm{p},\bm{\lambda},\bm{a}}+v\;\middle|\; \widetilde{\Psi}_{\bm{p},\bm{\lambda},\bm{a}}\in\Sigma_{\tau}(S), \N{v}_{H^1}<\e,v\in H_{\bm{p},\bm{\lambda},\bm{a}}\right\}$$
then Proposition \ref{DicotomiaSoluzioni} holds with $V_{\tau}(S)$ in place of $U_{\tau}(S)$.

\begin{proposizione}\label{DifferenzialeSecondoPositivo}
 There exists $C$ such that for $0<\tau<\tau_0$, $\widetilde{\Psi}_{\bm{p},\bm{\lambda},\bm{a}}\in\Sigma_{\tau}(S)$ and $v\in H_{\bm{p},\bm{\lambda},\bm{a}}$
 $$d^2\ci{J}_{\tau}(\widetilde{\Psi}_{\bm{p},\bm{\lambda},\bm{a}})[v,v]\ge C\N{v}^2.$$
\end{proposizione}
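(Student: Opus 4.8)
The plan is to argue by contradiction, reducing the coercivity of the second variation to the nondegeneracy of the standard Jerison--Lee bubble on $\H^1$ together with a profile (concentration--compactness) decomposition. First I would write the second differential explicitly as
$$d^2\ci{J}_{\tau}(w)[v,v] = \int vL_{\theta}v - (3-\tau)\int K|w|^{2-\tau}v^2,$$
and suppose the statement fails: there are $\tau_i\downarrow 0$, approximate solutions $\widetilde{\Psi}_i=\widetilde{\Psi}_{\bm{p}_i,\bm{\lambda}_i,\bm{a}_i}\in\Sigma_{\tau_i}(S)$ and $v_i\in H_{\bm{p}_i,\bm{\lambda}_i,\bm{a}_i}$ with $\N{v_i}_{H^1}=1$ and $d^2\ci{J}_{\tau_i}(\widetilde{\Psi}_i)[v_i,v_i]\to\ell\le 0$. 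Since the positive CR Yamabe class makes $L_{\theta}$ coercive, the quadratic part $A_i=\int v_iL_{\theta}v_i$ satisfies $A_i\ge c_0\N{v_i}_{H^1}^2=c_0>0$, so the only way $\ell\le 0$ can occur is through the concentrating potential term. My goal would be to prove the sharp lower bound $\liminf_i d^2\ci{J}_{\tau_i}(\widetilde{\Psi}_i)[v_i,v_i]\ge 0$, with equality forcing the degeneration $\N{v_i}_{H^1}\to 0$, which contradicts the normalization.

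Next I would reduce to the flat model near each concentration point. Using Lemma \ref{StimaDifferenzaTotale} and $\tau_i\to 0$ I replace $\widetilde{\Psi}_i$ by $\Psi_i=\Psi_{\bm{p}_i,\bm{\lambda}_i,\bm{a}_i}$ and $|\Psi_i|^{2-\tau_i}$ by $\Psi_i^2$ in the potential integral at the cost of $o(1)$, so that the form becomes $\int v_iL_{\theta}v_i-3\int K\Psi_i^2v_i^2+o(1)$. The potential $3K\Psi_i^2$ is supported, up to negligible tails, in the balls $B_{O(1/\lambda_i^k)}(p_i^k)$ with $\lambda_i^k\sim\tau_i^{-1/2}\to\infty$. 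For each $k$ I would pass to the blow-up: in pseudohermitian normal coordinates around $p_i^k$ set $\widehat{v}_i^k=(\lambda_i^k)^{-1}\,v_i\circ\delta_{(\lambda_i^k)^{-1}}$, the rescaling preserving the Folland--Stein norm; then $\widehat{v}_i^k$ is bounded in $\mathcal{D}^{1,2}(\H^1)$ and, along a subsequence, converges weakly to some $v^{(k)}$, while the rescaled potential converges to $3\bar U^2$, where $\bar U$ is the model bubble (the appropriate rescaling of $U$ solving $-4\Delta_b\bar U=\bar U^3$, the normalizations of $a_k$ and $\lambda_k$ being chosen exactly to produce this constant).

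The heart of the argument is the passage of the orthogonality conditions to the limit, combined with model rigidity. The constraints $\bra v_i,\widetilde{\phi}_{p_i^k,\lambda_i^k}\ket=\bra v_i,\de_{p^k}\widetilde{\Psi}_i\ket=\bra v_i,\de_{\lambda^k}\widetilde{\Psi}_i\ket=0$ defining $H_{\bm{p}_i,\bm{\lambda}_i,\bm{a}_i}$ become, after rescaling and using Lemmas \ref{StimaDifferenzaTotale} and \ref{LimiteBolle} to trade the tilded objects for their flat-model counterparts, orthogonality of $v^{(k)}$ to $\bar U$ itself (the $a$-direction), to the three Heisenberg translation generators (the $p$-direction) and to the dilation generator $\de_{\lambda}U_{\lambda}|_{\lambda=1}$ (the $\lambda$-direction). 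The model quadratic form $Q_0(w)=\int_{\H^1}w(-4\Delta_bw)-3\int_{\H^1}\bar U^2w^2$ associated to the linearization $-4\Delta_b-3\bar U^2$ has exactly one negative direction, spanned by $\bar U$ (indeed $Q_0(\bar U)=-2\int\bar U^4<0$), a kernel equal to the span of the translation and dilation generators, and is positive definite on the orthogonal complement of these; this is the nondegeneracy of the Jerison--Lee bubble in dimension three, available thanks to the classification of \cite{CLMR} and the spectral analysis underlying \cite{Af}. Writing $v_i$ as the sum of the profiles carried back to $M$ plus an asymptotically $H^1$-orthogonal residual $r_i$ whose potential contribution vanishes, the profile decomposition yields $\liminf_i d^2\ci{J}_{\tau_i}(\widetilde{\Psi}_i)[v_i,v_i]\ge\sum_kQ_0(v^{(k)})+c_0\liminf_i\N{r_i}_{H^1}^2\ge 0$, since each $v^{(k)}$ lies in the complement where $Q_0\ge 0$.

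Combining this with $\ell\le 0$ forces $\ell=0$, hence $Q_0(v^{(k)})=0$ for every $k$ and $\N{r_i}_{H^1}\to 0$; coercivity of $Q_0$ on the complement then gives $v^{(k)}=0$ for all $k$. But then $\N{v_i}_{H^1}^2=\sum_k\N{v^{(k)}}^2+\N{r_i}_{H^1}^2+o(1)\to 0$, contradicting $\N{v_i}_{H^1}=1$, and the proposition follows. I expect the main obstacle to be twofold: correctly invoking the nondegeneracy and the spectral gap of the bubble on $\H^1$, which in the CR setting is considerably more delicate than its Riemannian analogue and is what ties this step to the recent classification results; and the careful book-keeping in the neck and outer regions — controlling the cut-offs and the mutual interactions of distinct bubbles — so that the profile decomposition is exact up to $o(1)$ and no $H^1$-mass of $v_i$ escapes the blow-up limits.
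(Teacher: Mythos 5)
Your proposal is correct and is essentially the argument the paper itself appeals to: the paper's proof is a two-line deferral to the ``standard computations already present in the literature for the Riemannian case'' (i.e.\ precisely the contradiction/blow-up/profile-decomposition scheme you spell out), plus the two CR-specific spectral facts you isolate, namely that the linearized operator at the bubble has kernel spanned by the translation and dilation generators and exactly one negative direction, spanned by the bubble itself. One correction: these spectral facts should be attributed to Lemma 5 of \cite{MU} and its proof (as the paper does), not to the classification of \cite{CLMR}, which is a Liouville theorem for positive solutions and gives no information about the nondegeneracy or the Morse index of the linearization; the needed facts are nonetheless available, so this is a misattribution rather than a gap.
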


\begin{proof}
 The proof consists of standard computations already present in the literature for the Riemannian case; the only difference is that in this case one needs Lemma 5 from \cite{MU} and the fact that (using the notation from there) the operator $-\Delta_{\H^n}-(Q^*-1)\omega^{Q^*-2}$ has only one negative eigenspace generated by $\omega$ (which is not stated explicitely there, but follows from the proof).
\end{proof}

\begin{lemma}\label{LyapunovSchmidt}
 Given $\widetilde{\Psi}_{\bm{p},\bm{\lambda},\bm{a}}\in\Sigma_{\tau}(S)$ there exists an unique solution $v$ to
 $$\pi_{H_{\bm{p},\bm{\lambda},\bm{a}}}\nabla\ci{J}_{\tau}(\widetilde{\Psi}_{\bm{p},\bm{\lambda},\bm{a}}+v)=0$$
 on $H_{\bm{p},\bm{\lambda},\bm{a}}\cap\{\N{v}<\e\}$, which we will denote by $v_{\bm{p},\bm{\lambda},\bm{a}}$,
 and it verifies the estimate $\N{v_{\bm{p},\bm{\lambda},\bm{a}}}\lesssim \tau^{1/2}$.
\end{lemma}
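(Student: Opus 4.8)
The plan is to solve the auxiliary equation by a standard Lyapunov--Schmidt reduction, rewriting it as a fixed-point problem and applying the contraction mapping principle on the ball of radius $\sim\tau^{1/2}$ in $H_{\bm{p},\bm{\lambda},\bm{a}}$. Write $\Phi:=\widetilde{\Psi}_{\bm{p},\bm{\lambda},\bm{a}}$ and expand the gradient at $\Phi+v$ by Taylor's formula,
$$\nabla\ci{J}_{\tau}(\Phi+v) = \nabla\ci{J}_{\tau}(\Phi) + d^2\ci{J}_{\tau}(\Phi)[v] + R_{\tau}(v),$$
where $R_{\tau}(v)$ gathers the terms of order at least two in $v$ coming from the nonlinearity $K|u|^{2-\tau}u$, so that $R_{\tau}(0)=0$. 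Projecting onto $H_{\bm{p},\bm{\lambda},\bm{a}}$ and setting $\ci{L}:=\pi_{H_{\bm{p},\bm{\lambda},\bm{a}}}\,d^2\ci{J}_{\tau}(\Phi)|_{H_{\bm{p},\bm{\lambda},\bm{a}}}$, the equation $\pi_{H_{\bm{p},\bm{\lambda},\bm{a}}}\nabla\ci{J}_{\tau}(\Phi+v)=0$ is equivalent to $\ci{L}v = -\pi_{H_{\bm{p},\bm{\lambda},\bm{a}}}\nabla\ci{J}_{\tau}(\Phi) - \pi_{H_{\bm{p},\bm{\lambda},\bm{a}}}R_{\tau}(v)$.

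The invertibility of $\ci{L}$ is exactly what Proposition \ref{DifferenzialeSecondoPositivo} provides: the quadratic form $d^2\ci{J}_{\tau}(\Phi)[v,v]\ge C\N{v}^2$ is coercive on $H_{\bm{p},\bm{\lambda},\bm{a}}$ uniformly in $\tau$ and in the parameters, and it is also bounded, so $\ci{L}$ is a self-adjoint isomorphism of $H_{\bm{p},\bm{\lambda},\bm{a}}$ with $\N{\ci{L}^{-1}}\le C$ uniformly. Hence I would recast the problem as the fixed point of
$$T(v):= -\ci{L}^{-1}\pi_{H_{\bm{p},\bm{\lambda},\bm{a}}}\nabla\ci{J}_{\tau}(\Phi) - \ci{L}^{-1}\pi_{H_{\bm{p},\bm{\lambda},\bm{a}}}R_{\tau}(v)$$
and show that $T$ maps the ball $\{v\in H_{\bm{p},\bm{\lambda},\bm{a}}:\N{v}\le C_0\tau^{1/2}\}$ into itself and is a contraction there, for a suitable $C_0$ and $\tau$ small. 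Two estimates are needed: the error of the approximate solution $\N{\pi_{H_{\bm{p},\bm{\lambda},\bm{a}}}\nabla\ci{J}_{\tau}(\Phi)}\lesssim\tau^{1/2}$, and a Lipschitz bound of the form $\N{\pi_{H_{\bm{p},\bm{\lambda},\bm{a}}}(R_{\tau}(v_1)-R_{\tau}(v_2))}\lesssim(\N{v_1}+\N{v_2})^{\sigma}\N{v_1-v_2}$ for some $\sigma>0$, the latter following from the Sobolev inequality \eqref{Sobolev} and the superquadratic character of $R_\tau$. Granting these, $T$ is a contraction of the small ball, the unique fixed point $v=v_{\bm{p},\bm{\lambda},\bm{a}}$ exists, and the bound $\N{v}\le\N{\ci{L}^{-1}}\,\N{\pi_{H_{\bm{p},\bm{\lambda},\bm{a}}}\nabla\ci{J}_{\tau}(\Phi)}+o(\N{v})\lesssim\tau^{1/2}$ is immediate. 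Uniqueness in the whole ball $\{\N{v}<\e\}$ follows from the contraction property once $\e$ is taken smaller than the radius dictated by the Lipschitz constant.

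The main obstacle is the error estimate $\N{\pi_{H_{\bm{p},\bm{\lambda},\bm{a}}}\nabla\ci{J}_{\tau}(\Phi)}\lesssim\tau^{1/2}$, and this is precisely where the use of $\widetilde{\Psi}$ in place of $\Psi$ and the interaction structure of the bubbles enter. I would estimate the gradient by testing against an arbitrary $w\in H_{\bm{p},\bm{\lambda},\bm{a}}$ with $\N{w}=1$: the leading bubble term is cancelled by the defining equation $-L_{\theta}\widetilde{\phi}_{p_k,\lambda_k}=\phi_{p_k,\lambda_k}^3$ of the modified bubbles, reducing the computation to the bubble--bubble interaction integrals, the discrepancy between $\widetilde\phi$ and $\phi$, and the contribution of $K$ and of the manifold's geometry. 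The target order $\tau^{1/2}$ matches $\lambda_k^{-1}\sim\tau^{1/2}$, which is exactly the size of $\widetilde{\phi}_{p,\lambda}-\phi_{p,\lambda}$ controlled in Lemmas \ref{StimaDifferenza}, \ref{StimaDifferenzaTotale} and \ref{LimiteBolle}; the subcritical correction coming from the exponent $4-\tau$ contributes only a term of order $\tau|\log\tau|$, which is of lower order. The orthogonality encoded in $H_{\bm{p},\bm{\lambda},\bm{a}}$ removes the components of the gradient along the slow directions $\widetilde{\phi}_{p_k,\lambda_k}$, $\de_{p^k}\widetilde{\Psi}$ and $\de_{\lambda^k}\widetilde{\Psi}$, which are only $O(1)$ and would otherwise spoil the bound; these are exactly the directions deferred to the finite-dimensional reduced problem.
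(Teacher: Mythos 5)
Your proposal follows essentially the same route as the paper's proof: the paper likewise first establishes the error estimate $\N{\pi_{H_{\bm{p},\bm{\lambda},\bm{a}}}\nabla\ci{J}_{\tau}(\widetilde{\Psi}_{\bm{p},\bm{\lambda},\bm{a}})}\lesssim\tau^{1/2}$ by testing against $f\in H_{\bm{p},\bm{\lambda},\bm{a}}$ (exploiting the orthogonality $\bra\widetilde{\Psi}_{\bm{p},\bm{\lambda},\bm{a}},f\ket=0$, the bubble equation, and Lemmas \ref{StimaDifferenzaTotale}, \ref{LemmaB3}, \ref{LemmaDifferenzaSublaplaciano}), then rewrites the projected equation as a fixed-point problem $v=N(v)$ whose linear part is inverted uniformly thanks to Proposition \ref{DifferenzialeSecondoPositivo}, and concludes by a contraction argument on the invariant ball $B_{C\tau^{1/2}}\cap H_{\bm{p},\bm{\lambda},\bm{a}}$. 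Your outline matches this step for step, including the decisive orders of magnitude $\lambda_k^{-1}\sim\tau^{1/2}$ for the $\widetilde{\phi}-\phi$ discrepancy and $\tau|\log\tau|$ for the subcritical correction.
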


\begin{proof}
 Let $f\in H_{\bm{p},\bm{\lambda},\bm{a}}$. By Lemma \ref{StimaDifferenza}
 $$\left|\bra f, \Psi_{\bm{p},\bm{\lambda},\bm{a}}\ket\right|\lesssim\tau^{\frac{1}{2}}\N{f}, \left|\bra f, \frac{1}{\lambda^k}\frac{\de\Psi_{\bm{p},\bm{\lambda},\bm{a}}}{\de p^k}\ket\right|\lesssim\tau^{\frac{1}{2}}\N{f}, \left|\bra f, \lambda^k\frac{\de\Psi_{\bm{p},\bm{\lambda},\bm{a}}}{\de\lambda^k}\ket\right|\lesssim\tau^{\frac{1}{2}}\N{f}.$$
 Using this, Lemma \ref{StimaDifferenzaTotale}, the Sobolev inequality \eqref{Sobolev} and Lemmas \ref{StimaIntegralePsi3MenoTau}, \ref{LemmaB3} and \ref{LemmaDifferenzaSublaplaciano},
 $$d\ci{J}_{\tau}(\widetilde{\Psi}_{\bm{p},\bm{\lambda},\bm{a}})[f] = \int_ML_{\theta}\widetilde{\Psi}_{\bm{p},\bm{\lambda},\bm{a}}f - \int_MK\widetilde{\Psi}_{\bm{p},\bm{\lambda},\bm{a}}^{3-\tau}f =$$
 $$=\bra\widetilde{\Psi}_{\bm{p},\bm{\lambda},\bm{a}},f\ket -\int_MK\Psi_{\bm{p},\bm{\lambda},\bm{a}}^{3-\tau}f + \int_MK\left(\Psi_{\bm{p},\bm{\lambda},\bm{a}}^{3-\tau}-\widetilde{\Psi}_{\bm{p},\bm{\lambda},\bm{a}}^{3-\tau}\right)f=$$
 $$= -\int_MK\Psi_{\bm{p},\bm{\lambda},\bm{a}}^{3-\tau}f + O\left(\left(\int_M\left(\Psi_{\bm{p},\bm{\lambda},\bm{a}}^{3-\tau}-\widetilde{\Psi}_{\bm{p},\bm{\lambda},\bm{a}}^{3-\tau}\right)^{\frac{4}{3}}\right)^{\frac{3}{4}}\N{f}_{L^4}\right)=$$
 $$= -\sum_{k=1}^N\int_{B_{\rho}(\overline{x}^k)}Ka_k^{3-\tau}\phi_{p_k,\lambda_k}^{3-\tau}f + O\left(\tau^{\frac{1}{2}}\left(\int_M\left(\Psi_{\bm{p},\bm{\lambda},\bm{a}}^{3-\tau}\right)^{\frac{4}{3}}\right)^{\frac{3}{4}}\N{f}_{H^1}\right)=$$
 $$= -\sum_{k=1}^N\int_{B_{\rho}(\overline{x}^k)}K(p_k)a_k^{3-\tau}\phi_{p_k,\lambda_k}^{3-\tau}f +O\left(\sum_{k=1}^N\int_{B_{\rho}(\overline{x}^k)}|\nabla_b K(p_k)d(x,p_k)a_k^{3-\tau}\phi_{p_k,\lambda_k}^{3-\tau}f\right) + O\left(\tau^{\frac{1}{2}}\right)\N{f}_{H^1}=$$
 $$= \sum_{k=1}^N\int_{B_{\rho}(\overline{x}^k)}K(p_k)a_k^{3-\tau}\left(\phi_{p_k,\lambda_k}^3-\phi_{p_k,\lambda_k}^{3-\tau}\right)f +\sum_{k=1}^N\int_{B_{\rho/2}(\overline{x}^k)}K(p_k)a_k^{3-\tau}\cerchio{\Delta}_b\phi_{p_k,\lambda_k}f+$$
 $$+\sum_{k=1}^N\int_{B_{\rho}(\overline{x}^k)\setminus B_{\rho/2}(\overline{x}^k)}K(p_k)a_k^{3-\tau}\phi_{p_k,\lambda_k}^3f+$$
 $$+O\left(\sum_{k=1}^N\int_{B_{\rho}(\overline{x}^k)}|\nabla_b K(p_k)|d(x,p_k)a_k^{3-\tau}\phi_{p_k,\lambda_k}^{3-\tau}f\right) + O\left(\tau^{\frac{1}{2}}\right)\N{f}_{H^1}=$$
 $$=O\left(\sum_{k=1}^N\left(\int_{B_{\rho}(\overline{x}^k)}\left(\phi_{p_k,\lambda_k}^3-\phi_{p_k,\lambda_k}^{3-\tau}\right)^{\frac{4}{3}}\right)^{\frac{3}{4}}\N{f}_{L^4}\right)+$$
 $$+ \sum_{k=1}^N\int_{B_{\rho}(\overline{x}^k)}K(p_k)a_k^{3-\tau}(\cerchio{\Delta}_b-\Delta_b)\phi_{p_k,\lambda_k}f+ \sum_{k=1}^N\int_{B_{\rho}(\overline{x}^k)}K(p_k)a_k^{3-\tau}\Delta_b\phi_{p_k,\lambda_k}f+$$
 $$-\sum_{k=1}^N\int_{B_{\rho}(\overline{x}^k)\setminus B_{\rho/2}(\overline{x}^k)}K(p_k)a_k^{3-\tau}\cerchio{\Delta}_b\phi_{p_k,\lambda_k}f + O\left(\sum_{k=1}^N\left(\int_{B_{\rho}(\overline{x}^k)\setminus B_{\rho/2}(\overline{x}^k)}\phi_{p_k,\lambda_k}^4\right)^{\frac{3}{4}}\N{f}_{L^4}\right) +$$
 $$+O\left(\sum_{k=1}^N|\nabla_b K(p_k)|\left(\int_{B_{\rho}(\overline{x}^k)}d(x,p_k)^{\frac{4}{3}}\phi_{p_k,\lambda_k}^{4-\frac{4}{3}\tau}\right)^{\frac{3}{4}}\N{f}_{L^4}\right) + O\left(\tau^{\frac{1}{2}}\right)\N{f}_{H^1}=$$
 $$= O\left(\tau|\log\tau|\right)\N{f}_{H^1} +$$
 $$+ O\left(\sum_{k=1}^N\left[\int_{B_{\rho/2}(\overline{x}^k)}\left((\cerchio{\Delta}_b-\Delta_b)\phi_{p_k,\lambda_k}\right)^{\frac{4}{3}}\right]^{\frac{3}{4}}\right)\N{f}_{L^4}+$$
 $$+\sum_{k=1}^N\frac{1}{4}\int_{B_{\rho}(\overline{x}^k)}K(p_k)a_k^{3-\tau}R\phi_{p_k,\lambda_k}f+ O\left(\sum_{k=1}^N\left(\int_{B_{\rho}(\overline{x}^k)\setminus B_{\rho/2}(\overline{x}^k)}(\cerchio{\Delta}_b\phi_{p_k,\lambda_k})^{4/3}\right)^{3/4}\N{f}_{L^4}\right)+$$
 $$+ O\left(\sum_{k=1}^N\left(\lambda_k^{-4}\int_{B_{\lambda_k\rho}(\overline{x}^k)\setminus B_{\lambda_k\rho/2}(\overline{x}^k)}(1+|x\cdot\delta_{\lambda_k}p_k^{-1}|)^{-8}\right)^{\frac{3}{4}}\N{f}_{L^4}\right) +$$
 $$+ O\left(\sum_{k=1}^N|\nabla_b K(p_k)|\left(\lambda_k^{-4}\int_{B_{\lambda_k\rho}(\overline{x}^k)}\lambda_k^{-\frac{4}{3}}|x\cdot\delta_{\lambda_k}p_k^{-1}|^{\frac{4}{3}}\lambda_k^{4-4\tau/3}(1+|x\cdot\delta_{\lambda_k}p_k^{-1}|)^{-8-8\tau/3}\right)^{\frac{3}{4}}\right)\N{f}_{H^1}+$$
 $$+O\left(\tau^{\frac{1}{2}}\right)\N{f}_{H^1}=$$
 $$= O\left(\tau|\log\tau|\right)\N{f}_{H^1} + O\left(\tau^{1/2}\right)\N{f}_{H^1} +$$
 $$+O\left(\sum_{k=1}^N\left[\lambda_k^{-4}\int_{B_{\lambda_k\rho}(\overline{x}^k)}\left(\lambda_k^{-2}|x|^2\lambda_k(1+|x\cdot\delta_{\lambda_k}p_k^{-1}|)^{-2}\right)^{\frac{4}{3}}\right]^{\frac{3}{4}}\N{f}_{L^4}\right) +$$
 $$+O\left(\sum_{k=1}^N\left[\lambda_k^{-4}\int_{B_{\lambda_k\rho}(\overline{x}^k)\setminus B_{\lambda_k\rho/2}(\overline{x}^k)}\left(\lambda_k^2\lambda_k(1+|x\cdot\delta_{\lambda_k}p_k^{-1}|)^{-4}\right)^{\frac{4}{3}}\right]^{\frac{3}{4}}\right)\N{f}_{H^1} +$$
 $$+ O\left(\sum_{k=1}^N\left(\lambda_k^{-4}\lambda_k^{-8}\Vol(B_{\lambda_k\rho}(\overline{x}^k)\setminus B_{\lambda_k\rho/2}(\overline{x}^k))\right)^{\frac{3}{4}}\right)\N{f}_{H^1} +$$
 $$+ O\left(\sum_{k=1}^N|\nabla_b K(p_k)|\left(\lambda_k^{-4}\lambda_k^{-\frac{4}{3}}\lambda_k^{4-4\tau/3}\right)^{\frac{3}{4}}\right)\N{f}_{H^1}=$$
 $$= O(\tau^{1/2})\N{f}_{H^1}+ $$
 $$+ O\left(\sum_{k=1}^N\left[\lambda_k^{-4}\int_{B_{\rho\lambda_i/2}(\overline{x}^k)}\left(\lambda_k^{-1} + \lambda_k^{-5}(1+|x|)^2\right)^{\frac{4}{3}}\right]^{\frac{3}{4}}\right)\N{f}_{H^1} +$$
 $$+O\left(\sum_{k=1}^N\lambda_k^{-4}\Vol(B_{\lambda_k\rho}(\overline{x}^k))^{\frac{3}{4}}\right)\N{f}_{H^1} +$$
 $$+O\left(\sum_{k=1}^N\left[\lambda_k^{-\frac{16}{3}}\Vol(B_{\lambda_k\rho}(\overline{x}^k)\setminus B_{\lambda_k\rho/2}(\overline{x}^k))\right]^{\frac{3}{4}}\right)\N{f}_{H^1} +$$
 $$+O\left(\sum_{k=1}^N\lambda_k^{-6}\right)\N{f}_{H^1} + O\left(\sum_{k=1}^N\lambda_k^{-1}\right)\N{f}_{H^1}=$$
 $$= O(\tau^{1/2})\N{f}_{H^1}+ O\left(\sum_{k=1}^N\lambda_k^{-3}\lambda_k^{-1}\left[\Vol(B_{\rho\lambda_i/2}(\overline{x}^k))\right]^{\frac{3}{4}}\right)\N{f}_{H^1} +$$
 $$+O\left(\tau^{\frac{1}{2}}\right)\N{f}_{H^1} +O\left(\tau^{\frac{1}{2}}\right)\N{f}_{H^1}+O\left(\tau^3\right)\N{f}_{H^1}+O\left(\tau^{\frac{1}{2}}\right)\N{f}_{H^1}+O\left(\tau^{\frac{1}{2}}\right)\N{f}_{H^1}=$$
 $$=O\left(\tau^{\frac{1}{2}}\right)\N{f}_{H^1}.$$
 Therefore $\N{\pi_{H_{\bm{p},\bm{\lambda},\bm{a}}}\nabla\ci{J}_{\tau}(\widetilde{\Psi}_{\bm{p},\bm{\lambda},\bm{a}})}\lesssim\tau^{1/2}$.
 The equation we want to solve,
 $$\pi_{H_{\bm{p},\bm{\lambda},\bm{a}}}\nabla\ci{J}_{\tau}(\widetilde{\Psi}_{\bm{p},\bm{\lambda},\bm{a}}+v)=0,$$
 can be written as
 $$\pi_{H_{\bm{p},\bm{\lambda},\bm{a}}}\left(\nabla\ci{J}_{\tau}(\widetilde{\Psi}_{\bm{p},\bm{\lambda},\bm{a}})\right) + \left(\pi_{H_{\bm{p},\bm{\lambda},\bm{a}}}\circ\ci{J}''_{\tau}(\widetilde{\Psi}_{\bm{p},\bm{\lambda},\bm{a}})\right)v + R(\widetilde{\Psi}_{\bm{p},\bm{\lambda},\bm{a}},v)=0.$$
 where $R$ is a remainder term.
 Since we are looking for a solution $v\in H_{\bm{p},\bm{\lambda},\bm{a}}$ and by Proposition \ref{DifferenzialeSecondoPositivo} $\pi_{H_{\bm{p},\bm{\lambda},\bm{a}}}\circ\ci{J}''_{\tau}(\widetilde{\Psi}_{\bm{p},\bm{\lambda},\bm{a}})$ is invertible on this subspace, we can write the equation as
 $$v= -\left(\pi_{H_{\bm{p},\bm{\lambda},\bm{a}}}\circ\ci{J}''_{\tau}(\widetilde{\Psi}_{\bm{p},\bm{\lambda},\bm{a}})\right)^{-1}\left(\pi_{H_{\bm{p},\bm{\lambda},\bm{a}}}\left(\nabla\ci{J}_{\tau}(\widetilde{\Psi}_{\bm{p},\bm{\lambda},\bm{a}})\right) + R(\widetilde{\Psi}_{\bm{p},\bm{\lambda},\bm{a}},v)\right):= N(v).$$
 Since, by Proposition \ref{DifferenzialeSecondoPositivo}, $\left(\pi_{H_{\bm{p},\bm{\lambda},\bm{a}}}\circ\ci{J}''_{\tau}(\widetilde{\Psi}_{\bm{p},\bm{\lambda},\bm{a}})\right)^{-1}$ is uniformly bounded, the existence of a solution follows from a contraction argument.
 By the above estimate on $\N{\pi_{H_{\bm{p},\bm{\lambda},\bm{a}}}\nabla\ci{J}_{\tau}(\Psi)}$ there exists $C$ such that the ball $B_{C\tau^{\frac{1}{2}}}\cap H_{\bm{p},\bm{\lambda},\bm{a}}$ is invariant, and using the contraction theorem thereon allows to prove the estimate stated in the lemma.
\end{proof}

\begin{lemma}\label{Lemmaa}
 If $\widetilde{\Psi}_{\bm{p},\bm{\lambda},\bm{a}}+v\in V_{\tau}(S)$ is a critical point of $d\ci{J}_{\tau}$, then $a_k-\frac{2}{K(x_k)^{1/2}}=O(\tau^{1/2})$.
\end{lemma}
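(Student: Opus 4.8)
The plan is to use that $u=\widetilde{\Psi}_{\bm{p},\bm{\lambda},\bm{a}}+v$ is a critical point of $\ci{J}_{\tau}$, so that $d\ci{J}_{\tau}(u)=0$, and to read off the value of $a_k$ by testing this identity in the direction $\widetilde{\phi}_{p_k,\lambda_k}=\frac{\de\widetilde{\Psi}_{\bm{p},\bm{\lambda},\bm{a}}}{\de a^k}$. Writing the Euler--Lagrange form
\[
d\ci{J}_{\tau}(u)[w]=\bra u,w\ket-\int_MKu^{3-\tau}w,
\]
with $\bra\cdot,\cdot\ket=\int_M(L_{\theta}\,\cdot)\,\cdot$ the $H^1$ product, and recalling that $v\in H_{\bm{p},\bm{\lambda},\bm{a}}$ is $\bra\cdot,\cdot\ket$-orthogonal to $\widetilde{\phi}_{p_k,\lambda_k}$ while $L_{\theta}\widetilde{\phi}_{p_j,\lambda_j}$ is a fixed positive multiple of $\phi_{p_j,\lambda_j}^3$, the equation $d\ci{J}_{\tau}(u)[\widetilde{\phi}_{p_k,\lambda_k}]=0$ becomes the scalar identity
\[
\sum_{j=1}^Na_j\int_M(L_{\theta}\widetilde{\phi}_{p_j,\lambda_j})\,\widetilde{\phi}_{p_k,\lambda_k}=\int_MKu^{3-\tau}\,\widetilde{\phi}_{p_k,\lambda_k}.
\]
Everything reduces to evaluating the two sides to leading order in $\tau$.

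On the left I would isolate the diagonal $j=k$ from the interactions. Replacing $\widetilde{\phi}_{p_k,\lambda_k}$ by $\phi_{p_k,\lambda_k}$ through Lemma~\ref{StimaDifferenza} and using the scale invariance $\int\phi_{p_k,\lambda_k}^4=\int_{\H^1}U^4+o(1)$ (the cut-off tail being $O(\lambda_k^{-4})$), the diagonal term is proportional to $a_k$ with a positive constant coming from $\int_{\H^1}U^4$ and the operator normalisation. For $j\ne k$ the $\overline{x}^j$ are distinct critical points, so $\widetilde{\phi}_{p_k,\lambda_k}=O(\lambda_k^{-1})=O(\tau^{1/2})$ on the region where $\phi_{p_j,\lambda_j}^3$ concentrates, and $\int\phi_{p_j,\lambda_j}^3=O(\lambda_j^{-1})=O(\tau^{1/2})$, so every interaction is $O(\tau)$. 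Hence the left-hand side equals $a_k\,\kappa+O(\tau^{1/2})$ for a positive constant $\kappa$, and $a_k$ is bounded because $\widetilde{\Psi}_{\bm{p},\bm{\lambda},\bm{a}}\in\Sigma_{\tau}(S)$.

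On the right I would localise near $\overline{x}^k$, where $u=a_k\widetilde{\phi}_{p_k,\lambda_k}+(\text{lower order})$. The point is that replacing $u^{3-\tau}$ by $(a_k\phi_{p_k,\lambda_k})^{3-\tau}$ and $\widetilde{\phi}_{p_k,\lambda_k}$ by $\phi_{p_k,\lambda_k}$ costs only $O(\tau^{1/2})$: the contribution of $v$ is controlled by the Lyapunov--Schmidt bound $\N{v}\lesssim\tau^{1/2}$ of Lemma~\ref{LyapunovSchmidt} together with Hölder and the Sobolev inequality \eqref{Sobolev}, the neighbouring bubbles contribute $O(\tau)$ as above, and the oscillation of $K$ is absorbed via $K(p_k)=K(\overline{x}^k)+O(\tau^{1/2})$, which holds since $\nabla_{g_{\theta}}K(\overline{x}^k)=0$ and $d(p_k,\overline{x}^k)=O(\tau^{1/4})$. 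Using moreover $\int\phi_{p_k,\lambda_k}^{4-\tau}=\lambda_k^{-\tau}\int_{\H^1}U^{4-\tau}+O(\lambda_k^{-4})=\int_{\H^1}U^4+O(\tau|\log\tau|)$ (because $\lambda_k\sim\tau^{-1/2}$) and $a_k^{3-\tau}=a_k^3+O(\tau)$, the right-hand side equals $K(\overline{x}^k)\,a_k^3\,\kappa'+O(\tau^{1/2})$ for a positive constant $\kappa'$.

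Equating the two reductions gives the scalar relation $a_k\,\kappa=K(\overline{x}^k)\,a_k^3\,\kappa'+O(\tau^{1/2})$, which is precisely the condition that $a_k\phi_{p_k,\lambda_k}$ solve, to leading order, the frozen model equation $L_{\theta}w=K(\overline{x}^k)w^3$ on $\H^1$. By the Jerison--Lee normalisation recorded in Proposition~\ref{BlowUpIsolatiBolle}, the amplitude of this model bubble is $\frac{2}{K(\overline{x}^k)^{1/2}}$; since $a_k$ stays bounded away from $0$ on $\Sigma_{\tau}(S)$, the only admissible root is $a_k=\frac{2}{K(\overline{x}^k)^{1/2}}+O(\tau^{1/2})$, and replacing $K(\overline{x}^k)$ by $K(p_k)$ changes nothing, as they differ by $O(\tau^{1/2})$. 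I expect the main difficulty to be exactly the error bookkeeping on the right-hand side: showing that discarding $v$, the neighbouring bubbles, the cut-off and the curvature oscillation all cost at most $O(\tau^{1/2})$ requires pairing the a~priori estimate $\N{v}\lesssim\tau^{1/2}$ with careful Hölder bounds against the concentrating weight $\phi_{p_k,\lambda_k}^{3-\tau}$, of the same kind already carried out in the proof of Lemma~\ref{LyapunovSchmidt}.
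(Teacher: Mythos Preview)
Your proposal is correct and follows essentially the same route as the paper: test the Euler--Lagrange equation against the $k$-th bubble direction, show that all cross terms, $v$-terms, cut-off tails, and curvature oscillations are $O(\tau^{1/2})$, and read off the scalar relation $(4a_k-K(p_k)a_k^3)\int_{\H^1}U^4=O(\tau^{1/2})$, which combined with $|a_k-2/K(\overline{x}^k)^{1/2}|<\e$ forces $a_k-2/K(p_k)^{1/2}=O(\tau^{1/2})$. The only cosmetic difference is that the paper tests against $\phi_{p_k,\lambda_k}$ after absorbing $\widetilde{\Psi}-\Psi$ into a modified remainder $\widetilde{v}$, whereas you test against $\widetilde{\phi}_{p_k,\lambda_k}$ and exploit the orthogonality $\bra v,\widetilde{\phi}_{p_k,\lambda_k}\ket=0$ directly; both lead to the same computation.
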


\begin{proof}
 By Lemma \ref{LyapunovSchmidt} $\N{v}\lesssim\tau^{1/2}$, and by Lemma \ref{StimaDifferenza} $\N{\Psi_{\bm{p},\bm{\lambda},\bm{a}}-\widetilde{\Psi}_{\bm{p},\bm{\lambda},\bm{a}}}\lesssim\tau^{1/2}$, so we can write
 $$\widetilde{\Psi}_{\bm{p},\bm{\lambda},\bm{a}}+v = \Psi_{\bm{p},\bm{\lambda},\bm{a}} + \widetilde{v}$$
 with $\N{\widetilde{v}}\lesssim\tau^{1/2}$.
 
 Therefore
 $$0=d\ci{J}_{\tau}\left(\Psi_{\bm{p},\bm{\lambda},\bm{a}} + \widetilde{v}\right)\left[\phi_{p_k,\lambda_k}\right]=$$
 $$=\int_M\left(\Psi_{\bm{p},\bm{\lambda},\bm{a}} + \widetilde{v}\right)L_{\theta}\phi_{p_k,\lambda_k} -\int_MK\left|\Psi_{\bm{p},\bm{\lambda},\bm{a}} + \widetilde{v}\right|^{2-\tau}\left(\Psi_{\bm{p},\bm{\lambda},\bm{a}} + \widetilde{v}\right)\phi_{p_k,\lambda_k}.$$
 Since, if $x>0$, $||x+y|^{2-\tau}(x+y) - x^{3-\tau}| \lesssim x^{2-\tau}|y| + |y|^{3-\tau}$, and using Lemma \ref{LemmaDifferenzaSublaplaciano},
 $$0=\int_{B_{\rho}(\overline{x}^k)}a_k\phi_{p_k,\lambda_k}L_{\theta}\phi_{p_k,\lambda_k} +\bra\widetilde{v},\phi_{p_k,\lambda_k}\ket -\int_{B_{\rho}(\overline{x}^k)}K\left|\Psi_{\bm{p},\bm{\lambda},\bm{a}}\right|^{3-\tau}\phi_{p_k,\lambda_k}+$$
 $$ +O\left(\int_{B_{\rho}(\overline{x}^k)}(\left|\Psi_{\bm{p},\bm{\lambda},\bm{a}}\right|^{2-\tau}|v| + |v|^{3-\tau})\phi_{p_k,\lambda_k}\right) = $$
 $$= \int_{B_{\rho/2}(\overline{x}^k)}4a_k\phi_{p_k,\lambda_k}^4 +\int_{B_{\rho/2}(\overline{x}^k)}a_kR\phi_{p_k,\lambda_k}^2 - \int_{B_{\rho}(\overline{x}^k)\setminus B_{\rho/2}(\overline{x}^k)}\phi_{p_k,\lambda_k}4\cerchio{\Delta_b}\phi_{p_k,\lambda_k}+ $$
 $$+ \int_{B_{\rho}(\overline{x}^k)}4a_k\phi_{p_k,\lambda_k}(\Delta_b-\cerchio{\Delta_b})\phi_{p_k,\lambda_k} +O(\tau^{1/2}) +$$
 $$-\int_{B_{\rho}(\overline{x}^k)}Ka_k^{3-\tau}\phi_{p_k,\lambda_k}^{4-\tau} +O\left(\int_{B_{\rho}(\overline{x}^k)}(\phi_{p_k,\lambda_k}^{3-\tau}|v| + \phi_{p_k,\lambda_k}|v|^{3-\tau}) \right) = $$
 $$= \int_{B_{\rho/2}(\overline{x}^k)}4a_k\phi_{p_k,\lambda_k}^4+ O\left(\lambda_k^{-4}\int_{B_{\rho\lambda_k/2}(\overline{x}^k)}\lambda_k^2(1+|x\cdot\delta_{\lambda_i^{-1}}p_i^{-1}|)^{-4}\right)+$$
 $$+O\left(\lambda_k^{-4}\int_{B_{\lambda_k\rho}(\overline{x}^k)\setminus B_{\lambda_k\rho/2}(\overline{x}^k)}\lambda_k(1+|x\cdot\delta_{\lambda_k^{-1}}p_k^{-1}|)^{-2}\lambda_k\lambda_k^2(1+|x\cdot\delta_{\lambda_k^{-1}}p_k^{-1}|)^{-4}\right) +$$
 $$+ O\left(\int_{B_{\rho/2}(\overline{x}^k)}\phi_{p_k,\lambda_k}(\Delta_b-\cerchio{\Delta_b})\phi_{p_k,\lambda_k}\right) +$$
 $$+O(\tau^{1/2}) -\int_{B_{\rho}(\overline{x}^k)}K(p_k)a_k^{3-\tau}\phi_{p_k,\lambda_k}^{4-\tau} + O\left(\int_{B_{\rho}(\overline{x}^k)}|\nabla K(p_k)|d(x,p_k)a_k^{3-\tau}\phi_{p_k,\lambda_k}^{4-\tau}\right) +$$
 $$+O\left(\left(\int_{B_{\rho}(\overline{x}^k)}\left(\phi_{p_k,\lambda_k}^{3-\tau}\right)^{\frac{4}{3}}\right)^{\frac{3}{4}}\N{v}_{L^4} + \left(\int_{B_{\rho}(\overline{x}^k)}\left(\phi_{p_k,\lambda_k}\right)^{\frac{4}{1+\tau}}\right)^{\frac{1+\tau}{4}}\N{v}_{L^4}\right) = $$
 $$= \left(4a_k-K(p_k)a_k^3\right)\int_{B_{\rho/2}(\overline{x}^k)}\phi_{p_k,\lambda_k}^4+ \int_{B_{\rho/2}(\overline{x}^k)}\left(K(p_k)a_k^3\phi_{p_k,\lambda_k}^4-K(p_k)a_k^{3-\tau}\phi_{p_k,\lambda_k}^{4-\tau}\right)+$$
 $$-\int_{B_{\rho}(\overline{x}^k)\setminus B_{\rho/2}(\overline{x}^k)}K(p_k)a_k^{3-\tau}\phi_{p_k,\lambda_k}^{4-\tau}+ O(\lambda_k^{-2}\log\lambda_k)+ O\left(\lambda_k^{-6}\Vol(B_{\lambda_k\rho}(\overline{x}^k)\setminus B_{\lambda_k\rho/2}(\overline{x}^k))\right)+$$
 $$+O(\tau^{1/2})+ O(\tau^{1/2})+$$
 $$+O\left(|\nabla K(p_k)|\lambda_k^{-4}\int_{B_{\lambda_k\rho}(\overline{x}^k)}\lambda_k^{-1}|x\cdot\delta_{\lambda_k^{-1}}p_k^{-1}|\lambda_k^{4-\tau}(1+|x\cdot\delta_{\lambda_k^{-1}}p_k^{-1}|)^{-8+2\tau}\right) +$$
 $$+O(\N{v})=$$
 $$= \left(4a_k-K(p_k)a_k^3\right)\lambda_k^{-4}\int_{B_{\lambda_k\rho/2}(0)}\lambda_k^4U^4+ \int_{B_{\rho/2}(\overline{x}^k)}\left(K(p_k)a_k^3\phi_{p_k,\lambda_k}^4-K(p_k)a_k^3\phi_{p_k,\lambda_k}^{4-\tau}\right)+$$
 $$+ O\left(\tau\int_{B_{\rho/2}(\overline{x}^k)}K(p_k)a_k^{3-\tau}\phi_{p_k,\lambda_k}^{4-\tau}\right)+$$
 $$+O\left(\lambda_k^{-4}\int_{B_{\rho\lambda_k}(\overline{x}^k)\setminus B_{\rho\lambda_k/2}(\overline{x}^k)}a_k^{3-\tau}\left(\lambda_k(1+|x\cdot\delta_{\lambda_k^{-1}}p_k^{-1}|)^{-2}\right)^{4-\tau}\right)+ O(\tau\log\tau)+$$
 $$+O(\tau)+$$
 $$+ O(\tau^{1/2}) + O\left(|\nabla K(p_k)|\lambda_k^{-1}\lambda_k^{-\tau}\right)+ O(\tau^{1/2})=$$
 $$= \left(4a_k-K(p_k)a_k^3\right)\left(\int_{\H^1}U^4+o(1)\right)+$$
 $$+O\left(\lambda_i^{-4}\int_{B_{\lambda_i\rho/2}(\overline{x}^k)}(\lambda_i^4(1+|x\cdot\delta_{\lambda_k^{-1}}p_k^{-1}|)^{-8} -\lambda_k^{4-\tau}(1+|x\cdot\delta_{\lambda_k^{-1}}p_k^{-1}|)^{-8+2\tau})\right) +$$
 $$+O\left(\tau\lambda_k^{-4}\int_{B_{\lambda_k\rho/2}(\overline{x}^k)}\lambda_k^{4-\tau}(1+|x\cdot\delta_{\lambda_k^{-1}}p_k^{-1}|)^{-8+2\tau}\right)+$$
 $$+O\left(\lambda_k^{-\tau}\lambda_k^{-8+2\tau}\Vol(B_{\rho\lambda_k}(\overline{x}^k)\setminus B_{\rho\lambda_k/2}(\overline{x}^k))\right)+ O(\tau^{1/2})=$$
 $$= \left(4a_k-K(p_k)a_k^3\right)\left(\int_{\H^1}U^4+o(1)\right)+$$
 $$+O\left(\tau|\log\tau|\int_{B_{\lambda_k\rho/2}(\overline{x}^k)}(1+|x\cdot\delta_{\lambda_k^{-1}}p_k^{-1}|)^{-8}\right)+O(\tau)+ O(\lambda_k^{-4}) + O(\tau^{1/2}) = $$
 $$= \left(4a_k-K(p_k)a_k^3\right)\left(\int_{\H^1}U^4+o(1)\right)+O(\tau^{1/2}).$$
 Since $\left|a_k-\frac{2}{K(x_k)^{1/2}}\right|<\e$ this implies the thesis.
\end{proof}

Thus if we define
$$\Sigma'_{\tau}(S) = \left\{\widetilde{\Psi}_{\bm{p},\bm{\lambda},\bm{a}}\;\middle|\;d(p_k,\overline{x}^k)<A\tau^{\frac{1}{4}},A^{-1}\tau^{-\frac{1}{2}}<\lambda_i<A\tau^{-\frac{1}{2}},\left|a_k-\frac{2}{K(\overline{x}^k)^{\frac{1}{2}}}\right|<A\tau^{\frac{1}{2}}|\log\tau|\right\}$$
and
\begin{equation}\label{DefinizioneVtauprimo}
 V'_{\tau}(S)= \left\{\widetilde{\Psi}_{\bm{p},\bm{\lambda},\bm{a}}+v\;\middle|\; \widetilde{\Psi}_{\bm{p},\bm{\lambda},\bm{a}}\in\Sigma'_{\tau}(S), \N{v}_{H^1}<A\tau^{1/2},v\in H_{\bm{p},\bm{\lambda},\bm{a}}\right\}
\end{equation}
then Proposition \ref{DicotomiaSoluzioni} holds with $V'_{\tau}(S)$ in place of $U_{\tau}(S)$.

\begin{proposizione}\label{GradoPuntiCriticiInfinito}
 $$\deg(\nabla\ci{J}_{\tau},V'_{\tau}(S),0)=(-1)^{\sum_{k=1}^N\ind(K,\overline{x}^k)}$$
\end{proposizione}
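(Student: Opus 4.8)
The plan is to run a finite-dimensional (Lyapunov--Schmidt) reduction and then identify the Leray--Schauder degree of $\nabla\ci{J}_{\tau}$ with the Brouwer degree of the gradient of a reduced function of the parameters $(\bm{p},\bm{\lambda},\bm{a})$. By Lemma \ref{LyapunovSchmidt}, for every $\widetilde{\Psi}_{\bm{p},\bm{\lambda},\bm{a}}\in\Sigma'_{\tau}(S)$ there is a unique small $v_{\bm{p},\bm{\lambda},\bm{a}}\in H_{\bm{p},\bm{\lambda},\bm{a}}$ solving $\pi_{H_{\bm{p},\bm{\lambda},\bm{a}}}\nabla\ci{J}_{\tau}=0$, so every zero of $\nabla\ci{J}_{\tau}$ in $V'_{\tau}(S)$ lies on the finite-dimensional manifold $\{\widetilde{\Psi}_{\bm{p},\bm{\lambda},\bm{a}}+v_{\bm{p},\bm{\lambda},\bm{a}}\}$ and corresponds to a critical point of the reduced function $\widetilde{\ci{J}}_{\tau}(\bm{p},\bm{\lambda},\bm{a})=\ci{J}_{\tau}(\widetilde{\Psi}_{\bm{p},\bm{\lambda},\bm{a}}+v_{\bm{p},\bm{\lambda},\bm{a}})$. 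Since by Proposition \ref{DifferenzialeSecondoPositivo} the second differential $\ci{J}''_{\tau}$ is positive definite on $H_{\bm{p},\bm{\lambda},\bm{a}}$, the infinite-dimensional complement carries no negative directions; the standard degree splitting for a compact gradient perturbation of the identity then gives, with no sign flip, $\deg(\nabla\ci{J}_{\tau},V'_{\tau}(S),0)=\deg(\nabla_{(\bm{p},\bm{\lambda},\bm{a})}\widetilde{\ci{J}}_{\tau},\Sigma'_{\tau}(S),0)$.

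Next I would expand $\widetilde{\ci{J}}_{\tau}$ in the three groups of parameters. Substituting the near-optimal amplitudes $a_k=2/K(\overline{x}^k)^{1/2}+O(\tau^{1/2})$ from Lemma \ref{Lemmaa} and rescaling $\lambda_k=\tau^{-1/2}\Lambda_k$ (the range dictated by $\Sigma'_{\tau}(S)$), the $(\bm{p},\bm{\lambda})$-dependent part of the reduced energy is, at leading order, $\tau$ times the sum of a location term governed by $-K(p_k)$ and a concentration potential $\Phi(\bm{\Lambda})$. Here $\Phi$ collects the logarithmic term coming from subcriticality (the $\lambda^{-\tau}$ factor in $\int\phi_{p,\lambda}^{4-\tau}$), the mass and curvature contributions $A_{\overline{x}^k}$, $\Delta_bK(\overline{x}^k)$ at order $\lambda_k^{-2}$, and the interaction contributions $G_{\overline{x}^j}(\overline{x}^k)$ at order $(\lambda_j\lambda_k)^{-1}$ (via Lemma \ref{LimiteBolle}); this is exactly the balance already encoded by the Pohozaev identity in Proposition \ref{PropMatriceDefinita}. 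A direct computation identifies the Hessian of $\Phi$ at its critical point with the matrix $M(S)$ of \eqref{DefinizioneMatrice} up to conjugation by a positive diagonal matrix.

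This exhibits a block structure in $\bm{p}$, $\bm{a}$, $\bm{\Lambda}$ with mutually lower-order couplings, so $\nabla_{(\bm{p},\bm{\lambda},\bm{a})}\widetilde{\ci{J}}_{\tau}$ is admissibly homotopic to a product map and its degree is the product of the three block degrees. In the location variables the reduced energy behaves like $-cK(p_k)$ with $c>0$, so, $K$ being a Morse function, its gradient has degree $\mathrm{sign}\det(-c\,\mathrm{Hess}\,K(\overline{x}^k))=(-1)^{3-\ind(K,\overline{x}^k)}$ at each point. In each amplitude variable the map $a_k\mapsto\de_{a_k}\widetilde{\ci{J}}_{\tau}$ is, by the computation in Lemma \ref{Lemmaa}, strictly decreasing through its zero, contributing a factor $-1$. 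In the concentration variables, condition \eqref{Condizione2} (that is, $M(S)>0$) forces $\Phi$ to have a nondegenerate minimum in the prescribed range, so $\nabla_{\bm{\Lambda}}\Phi$ contributes $+1$. Multiplying,
$$\deg=\prod_{k=1}^{N}(-1)^{3-\ind(K,\overline{x}^k)}\cdot(-1)^{N}\cdot 1=(-1)^{4N-\sum_{k=1}^N\ind(K,\overline{x}^k)}=(-1)^{\sum_{k=1}^N\ind(K,\overline{x}^k)},$$
which is the claim; conditions \eqref{Condizione1} and \eqref{Condizione2} are precisely what make all the relevant critical points nondegenerate.

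The main obstacle is the second step: producing the $C^1$-expansion of the reduced gradient to exactly the order at which $M(S)$ and $\mathrm{Hess}\,K$ appear, and controlling every cross-term between the location, amplitude and concentration directions so that it is of strictly lower order than the diagonal block it would perturb. Only then is the homotopy to the product map admissible, with no zeros escaping through $\de\Sigma'_{\tau}(S)$ or through the constraint $\N{v}_{H^1}<A\tau^{1/2}$ in \eqref{DefinizioneVtauprimo}. This bookkeeping of the interaction estimates is the delicate part, carried out as in the Riemannian computations of \cite{L} and \cite{MM}.
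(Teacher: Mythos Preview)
Your outline is correct and leads to the right factor decomposition, but the paper organizes the argument differently. Rather than first performing the Lyapunov--Schmidt reduction (Lemma \ref{LyapunovSchmidt}) and then computing the Brouwer degree of $\nabla_{(\bm{p},\bm{\lambda},\bm{a})}\widetilde{\ci{J}}_{\tau}$, the paper applies the product formula of Proposition \ref{GradoProdotto} directly on $V'_{\tau}(S)$, treating the infinite-dimensional ball $\{\N{v}_{H^1}<A\tau^{1/2}\}\cap H_{\bm{p},\bm{\lambda},\bm{a}}$ as one more factor alongside the boxes in $\bm{p}$, $\bm{\lambda}$, $\bm{a}$. Concretely, it computes $\de_{a_k}\ci{J}_{\tau}(\widetilde{\Psi}+v)$, $(X,Y,T)_{p_k}\ci{J}_{\tau}(\widetilde{\Psi}+v)$ and $\de_{\lambda_k}\ci{J}_{\tau}(\widetilde{\Psi}+v)$ for a \emph{generic} $v$ in the ball, checks the required nonvanishing on each boundary slice, and then reads off the four contributions $(-1)$, $(-1)^{3-\ind(K,\overline{x}^k)}$, $+1$ (from the strictly convex potential in $\bm{\lambda}$), and $+1$ (from Proposition \ref{DifferenzialeSecondoPositivo}). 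The advantage of the paper's route is that it bypasses the need to control the $C^1$ dependence of $v_{\bm{p},\bm{\lambda},\bm{a}}$ on the parameters and to justify that the off-diagonal blocks are lower order after reduction; your route is the more familiar Lyapunov--Schmidt paradigm and has the virtue of making the reduced energy (and its relation to $M(S)$) explicit. Either way the delicate step is exactly the one you flag: obtaining the expansions to the order at which $M(S)$ and $\mathrm{Hess}\,K$ appear while keeping all cross-terms strictly smaller.
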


\begin{proof}
 We want to use Proposition \ref{GradoProdotto}.
 If we define $\Omega_k= B_{A\tau^{\frac{1}{4}}}(p_k)$,
 $\Omega_{N+k}=(A^{-1}\tau^{-\frac{1}{2}},A\tau^{-\frac{1}{2}})$,
 $\Omega_{2N+k}=\left(\frac{2}{K(\overline{x}^k)^{\frac{1}{2}}}-A\tau^{\frac{1}{2}}\log\tau,\frac{2}{K(\overline{x}^k)^{\frac{1}{2}}}+A\tau^{\frac{1}{2}}\log\tau\right)$,
 and $\Omega_{3N+1}=B_{A\tau^{1/2}}(0)\subset H_{\bm{p},\bm{\lambda},\bm{a}}$, then $V'_{\tau}(S)$ is in natural correspondence with $\bigoplus_{i=1}^{3N+1}\Omega_i$.
 $B_{A\tau^{\frac{1}{4}}}(p_k)$ is not a subset of an Euclidean space, but it is a subset of a small neighborhood of an oriented manifold, so Proposition \ref{GradoProdotto} applies with minor notational changes.
 
 Let us compute
 $$\frac{\de}{\de a_k}\ci{J}_{\tau}(\widetilde{\Psi}_{\bm{p},\bm{\lambda},\bm{a}}+v) =$$
 $$=\int_M\frac{\de}{\de a_k}\widetilde{\Psi}_{\bm{p},\bm{\lambda},\bm{a}}L_{\theta}(\widetilde{\Psi}_{\bm{p},\bm{\lambda},\bm{a}}+v)) - \int_MK(\widetilde{\Psi}_{\bm{p},\bm{\lambda},\bm{a}}+v)^{3-\tau}\frac{\de}{\de a_k}\widetilde{\Psi}_{\bm{p},\bm{\lambda},\bm{a}}.$$
 It can be easily verified that the contribute of the terms involving $v$ is $O(\tau^{1/2})$, and that when replacing $\frac{\de}{\de a_k}\widetilde{\Psi}_{\bm{p},\bm{\lambda},\bm{a}}$ with $\frac{\de}{\de a_k}\Psi_{\bm{p},\bm{\lambda},\bm{a}}$ and $\widetilde{\Psi}_{\bm{p},\bm{\lambda},\bm{a}}^{3-\tau}$ with $\Psi_{\bm{p},\bm{\lambda},\bm{a}}^{3-\tau}$ the above quantity is modified by $O(\tau^{1/2})$, so
 $$\frac{\de}{\de a_k}\ci{J}_{\tau}(\widetilde{\Psi}_{\bm{p},\bm{\lambda},\bm{a}}+v) =$$
 $$=\int_M\frac{\de}{\de a_k}\Psi_{\bm{p},\bm{\lambda},\bm{a}}L_{\theta}\widetilde{\Psi}_{\bm{p},\bm{\lambda},\bm{a}} - \int_MK\Psi_{\bm{p},\bm{\lambda},\bm{a}}^{3-\tau}\frac{\de}{\de a_i}\Psi_{\bm{p},\bm{\lambda},\bm{a}} + O(\tau^{1/2}).$$
 By definition $\frac{\de}{\de a_k}\Psi_{\bm{p},\bm{\lambda},\bm{a}}=\phi_{p_k,\lambda_k}$, so
 $$\frac{\de}{\de a_k}\ci{J}_{\tau}(\widetilde{\Psi}_{\bm{p},\bm{\lambda},\bm{a}}+v) =\int_M\phi_{p_k,\lambda_k}L_{\theta}\widetilde{\Psi}_{\bm{p},\bm{\lambda},\bm{a}} - \int_MK\Psi_{\bm{p},\bm{\lambda},\bm{a}}^{3-\tau}\phi_{p_k,\lambda_k} + O(\tau^{1/2})=$$
 $$= \int_M\phi_{p_k,\lambda_k}4a_k\phi_{p_k,\lambda_k}^3 - \int_MKa_k^{3-\tau}\phi_{p_k,\lambda_k}^{3-\tau}\phi_{p_k,\lambda_k} + O(\tau^{1/2})=$$
 $$= 4a_k\int_{B_{\rho}(x_k)}\phi_{p_k,\lambda_k}^4 - a_k^{3-\tau}K(x_k)\int_{B_{\rho}(x_k)}\phi_{p_k,\lambda_k}^{4-\tau} - a_k^{3-\tau}\int_{B_{\rho}(x_k)}(K-K(x_k))\phi_{p_k,\lambda_k}^{4-\tau} + O(\tau^{1/2})=$$
 $$= 4a_k\int_{B_{\rho}(x_k)}\phi_{p_k,\lambda_k}^4 - a_k^{3-\tau}K(x_k)\int_{B_{\rho}(x_k)}\phi_{p_k,\lambda_k}^4 - a_k^{3-\tau}K(x_k)\int_{B_{\rho}(x_k)}(\phi_{p_k,\lambda_k}^{4-\tau}-\phi_{p_k,\lambda_k}^4)+$$
 $$+O\left(\int_{B_{\rho}(x_k)}d(x,x_k)\phi_{p_k,\lambda_k}^{4-\tau}\right) + O(\tau^{1/2})=$$
 $$= \left(4a_k- a_k^{3-\tau}K(x_k)\right)\int_{B_{\rho}(x_k)}\phi_{p_k,\lambda_k}^4 +O\left(\tau\log\tau\right)+O\left(\tau^{1/2}\right) + O(\tau^{1/2})=$$
 $$=(C+o(1))\left(4a_k- a_k^{3-\tau}K(x_k)\right)+ O(\tau^{1/2}).$$
 Therefore if $a_k-\frac{2}{K(x_k)^{1/2}}=A\tau^{1/2}\log\tau$ then $\frac{\de}{\de a_k}\ci{J}_{\tau}(\widetilde{\Psi}_{\bm{p},\bm{\lambda},\bm{a}}+v) <0$, while if $a_k-\frac{2}{K(x_k)^{1/2}}=-A\tau^{1/2}\log\tau$ then $\frac{\de}{\de a_k}\ci{J}_{\tau}(\widetilde{\Psi}_{\bm{p},\bm{\lambda},\bm{a}}+v) >0$.
 Therefore, viewing $\frac{\de}{\de a_k}\ci{J}_{\tau}(\widetilde{\Psi}_{\bm{p},\bm{\lambda},\bm{a}}+v)$ as a function of $a_k$,
 \begin{equation}\label{GradoDimostrazione1}
  \deg\left(\frac{\de}{\de a_k}\ci{J}_{\tau}(\widetilde{\Psi}_{\bm{p},\bm{\lambda},\bm{a}}+v),\Omega_{2N+k},0\right)=-1
 \end{equation}
 Now, viewing $\ci{J}_{\tau}(\widetilde{\Psi}_{\bm{p},\bm{\lambda},\bm{a}}+v)$ as a function of $p_k$, let us derive it with respect to the vector field $X$:
 $$X_{p_k}\ci{J}_{\tau}(\widetilde{\Psi}_{\bm{p},\bm{\lambda},\bm{a}}+v) =$$
 $$=\int_MX_{p_k}\widetilde{\Psi}_{\bm{p},\bm{\lambda},\bm{a}}L_{\theta}(\widetilde{\Psi}_{\bm{p},\bm{\lambda},\bm{a}}+v)) - \int_MK(\widetilde{\Psi}_{\bm{p},\bm{\lambda},\bm{a}}+v)^{3-\tau}X_{p_k}\widetilde{\Psi}_{\bm{p},\bm{\lambda},\bm{a}}=$$
 $$=\int_MX_{p_k}\widetilde{\Psi}_{\bm{p},\bm{\lambda},\bm{a}}L_{\theta}\widetilde{\Psi}_{\bm{p},\bm{\lambda},\bm{a}} - \int_MK(\widetilde{\Psi}_{\bm{p},\bm{\lambda},\bm{a}}+v)^{3-\tau}X_{p_k}\widetilde{\Psi}_{\bm{p},\bm{\lambda},\bm{a}}$$
 where we used the fact that $v\in H_{\bm{\bm{p},\bm{\lambda},\bm{a}}}$.
 It can be easily verified that the contribute of the term involving $v$ is $O(\tau^{1/2})$, and that when replacing $X_{p_k}\widetilde{\Psi}_{\bm{p},\bm{\lambda},\bm{a}}$ with $X_{p_k}\Psi_{\bm{p},\bm{\lambda},\bm{a}}$ and $\widetilde{\Psi}_{\bm{p},\bm{\lambda},\bm{a}}^{3-\tau}$ with $\Psi_{\bm{p},\bm{\lambda},\bm{a}}^{3-\tau}$ the above quantity is modified by $O(\tau^{1/2})$, so
 $$X_{p_k}\ci{J}_{\tau}(\widetilde{\Psi}_{\bm{p},\bm{\lambda},\bm{a}}+v) =$$
 $$=\int_MX_{p_k}\Psi_{\bm{p},\bm{\lambda},\bm{a}}L_{\theta}\widetilde{\Psi}_{\bm{p},\bm{\lambda},\bm{a}} - \int_MK\Psi_{\bm{p},\bm{\lambda},\bm{a}}^{3-\tau}X_{p_k}\Psi_{\bm{p},\bm{\lambda},\bm{a}}+ O(\tau^{1/2})=$$
 $$=\int_Ma_kX_{p_k}\phi_{p_k,\lambda_k}a_k\phi_{p_k,\lambda_k}^3 - \int_MKa_k^{3-\tau}\phi_{p_k,\lambda_k}^{3-\tau}a_kX_{p_k}\phi_{p_k,\lambda_k}+ O(\tau^{1/2})=$$
 $$=\frac{1}{4}a_k^2\int_MX_{p_k}(\phi_{p_k,\lambda_k}^4) - \frac{1}{4-\tau}a_k^{4-\tau}\int_MKX_{p_k}(\phi_{p_k,\lambda_k}^{4-\tau})+ O(\tau^{1/2}).$$
 In $B_{\rho/2}(\overline{x}^k)$ by definition $X_{p_k}(\phi_{p_k,\lambda_k}^s(x))=-X_x(\phi_{p_k,\lambda_k}^s(x))$ for any $s\in\R$, so
 $$X_{p_k}\ci{J}_{\tau}(\widetilde{\Psi}_{\bm{p},\bm{\lambda},\bm{a}}+v) =$$
 $$=-\frac{1}{4}a_k^2\int_{B_{\rho/2}(\overline{x}^k)}X_x(\phi_{p_k,\lambda_k}^4) +\frac{1}{4}a_k^2\int_{M\setminus B_{\rho/2}(\overline{x}^k)}X_{p_k}(\phi_{p_k,\lambda_k}^4) + \frac{1}{4-\tau}a_k^{4-\tau}\int_{B_{\rho/2}(\overline{x}^k)}KX_x(\phi_{p_k,\lambda_k}^{4-\tau})+$$
 $$-\frac{1}{4-\tau}a_k^{4-\tau}\int_{M\setminus B_{\rho/2}(\overline{x}^k)}KX_{p_k}(\phi_{p_k,\lambda_k}^{4-\tau}) + O(\tau^{1/2})=$$
 $$= \frac{1}{4}a_k^2\int_{B_{\rho/2}(\overline{x}^k)}\phi_{p_k,\lambda_k}^4\operatorname{div}X_x -\frac{1}{4}a_k^2\int_{\de B_{\rho/2}(\overline{x}^k)}\phi_{p_k,\lambda_k}^4X_x\cdot\ni + $$
 $$+O\left(\int_{M\setminus B_{\rho/2}(\overline{x}^k)}\tau^{1/2}\lambda_k^{-4}d(x,p_k)^{-9}\right) -\frac{1}{4-\tau}a_k^{4-\tau}\int_{B_{\rho/2}(\overline{x}^k)}\phi_{p_k,\lambda_k}^{4-\tau}X_xK + $$
 $$+ \frac{1}{4-\tau}a_k^{4-\tau}\int_{B_{\rho/2}(\overline{x}^k)}K\phi_{p_k,\lambda_k}^{4-\tau}\operatorname{div}X_x - \frac{1}{4-\tau}a_k^{4-\tau}\int_{\de B_{\rho/2}(\overline{x}^k)}K\phi_{p_k,\lambda_k}^{4-\tau}X_x\cdot\ni+$$
 $$+O\left(\int_{M\setminus B_{\rho/2}(\overline{x}^k)}\tau^{1/2}\lambda_k^{-4}d(x,p_k)^{-9+2\tau}\right) + O(\tau^{1/2})=$$
 $$= O\left(\lambda_k^{-4}\int_{B_{\lambda_k\rho/2}(\overline{x}^k)}\lambda_k^4(1+|x\cdot\delta_{\lambda_k}p_k|)^{-8}\lambda_k^{-3}d(x,\overline{x}^k)^3\right) +$$
 $$+O\left(\int_{\de B_{\rho/2}(\overline{x}^k)}\lambda_k^{-4}d(x,p_k)^{-8}\right) + O(\tau^{5/2})+$$
 $$-\frac{1}{4-\tau}a_k^{4-\tau}\lambda_k^{-4}\int_{B_{\lambda_k\rho/2}(\overline{x}^k)}\lambda_k^{4-\tau}(1+|x\cdot\delta_{\lambda_k}p_k|)^{-8+2\tau}(X_xK(p_k)+ O(\lambda_k^{-1}|x\cdot\delta_{\lambda_k}p_k|))+$$
 $$+O\left(\lambda_k^{-4}\int_{B_{\lambda_k\rho/2}(\overline{x}^k)}\lambda_k^4(1+|x\cdot\delta_{\lambda_k}p_k|)^{-8+2\tau}\lambda_k^{-3}d(x,\overline{x}^k)^3\right) +$$
 $$+O\left(\int_{\de B_{\rho/2}(\overline{x}^k)}\lambda_k^{-4+\tau}d(x,p_k)^{-8+2\tau}\right)+O(\tau^{5/2})=$$
 $$= O(\tau^{3/2}) + O(\tau^2) +O(\tau^{5/2})+$$
 $$-(1+o(1))\frac{1}{4}a_k^4X_xK(p_k)\left(\int_{B_{\lambda_k\rho/2}(\overline{x}^k)}(1+|x\cdot\delta_{\lambda_k}p_k|)^{-8+2\tau}\right)
 =$$
 $$= -C(1+o(1))X_xK(p_k) + o(1).$$

 Performing analogous computations for $YK$ and $TK$, we derive that
 \begin{equation}\label{GradoDimostrazione2}
  \deg((X,Y,T)\ci{J}_{\tau},\Omega_k,0) = \deg(-\nabla_{g_{\theta}}K,\Omega_k,0)=(-1)^{3-\operatorname{ind}(K,\overline{x}_k)}
 \end{equation}
 where the second equality is classical in degree theory.
 
 Now compute
 $$\frac{\de}{\de\lambda_k}\ci{J}_{\tau}(\widetilde{\Psi}_{\bm{p},\bm{\lambda},\bm{a}}+v) =$$
 $$=\int_M(\widetilde{\Psi}_{\bm{p},\bm{\lambda},\bm{a}}+v)\frac{\de}{\de\lambda_k}L_{\theta}\widetilde{\Psi}_{\bm{p},\bm{\lambda},\bm{a}} - \int_MK(\widetilde{\Psi}_{\bm{p},\bm{\lambda},\bm{a}}+v)^{3-\tau}\frac{\de}{\de\lambda_k}\widetilde{\Psi}_{\bm{p},\bm{\lambda},\bm{a}}.$$
 It is easy to verify that the terms involving $v$ give a contribute of $O(\tau^2)$, so
 $$\frac{\de}{\de\lambda_k}\ci{J}_{\tau}(\widetilde{\Psi}_{\bm{p},\bm{\lambda},\bm{a}}+v) =$$
 $$=\int_M\widetilde{\Psi}_{\bm{p},\bm{\lambda},\bm{a}}\frac{\de}{\de\lambda_k}\Psi_{\bm{p},\bm{\lambda},\bm{a}}^3 - \int_MK\widetilde{\Psi}_{\bm{p},\bm{\lambda},\bm{a}}^{3-\tau}\frac{\de}{\de\lambda_k}\widetilde{\Psi}_{\bm{p},\bm{\lambda},\bm{a}} + O(\tau^2)=$$
 $$=\int_{B_{\rho}(\overline{x}_k)}\widetilde{\Psi}_{\bm{p},\bm{\lambda},\bm{a}}a_k\frac{\de}{\de\lambda_k}\phi_{p_k,\lambda_k}^3 - a_k\int_{B_{\rho}(\overline{x}_k)}K\widetilde{\Psi}_{\bm{p},\bm{\lambda},\bm{a}}^{3-\tau}\frac{\de}{\de\lambda_k}\phi_{p_k,\lambda_k} + O(\tau^2)=$$
 $$=\int_{B_{\rho}(\overline{x}_k)}a_k\phi_{p_k,\lambda_k}a_k\frac{\de}{\de\lambda_k}\phi_{p_k,\lambda_k}^3 + a_k\int_{B_{\rho}(\overline{x}_k)}(\widetilde{\Psi}_{\bm{p},\bm{\lambda},\bm{a}}-\Psi_{\bm{p},\bm{\lambda},\bm{a}})\frac{\de}{\de\lambda_k}\phi_{p_k,\lambda_k}^3 +$$
 $$- a_k\int_{B_{\rho}(\overline{x}_k)}K(a_k\phi_{p_k,\lambda_k})^{3-\tau}\frac{\de}{\de\lambda_k}\phi_{p_k,\lambda_k} - a_k\int_{B_{\rho}(\overline{x}_k)}K(\widetilde{\Psi}_{\bm{p},\bm{\lambda},\bm{a}}-\Psi_{\bm{p},\bm{\lambda},\bm{a}})^{3-\tau}\frac{\de}{\de\lambda_k}\phi_{p_k,\lambda_k} +$$
 $$+O\left(\int_{B_{\rho}(\overline{x}_k)}(a_k\phi_{p_k,\lambda_k})^{2-\tau}(\widetilde{\Psi}_{\bm{p},\bm{\lambda},\bm{a}}-\Psi_{\bm{p},\bm{\lambda},\bm{a}})+ (a_k\phi_{p_k,\lambda_k})(\widetilde{\Psi}_{\bm{p},\bm{\lambda},\bm{a}}-\Psi_{\bm{p},\bm{\lambda},\bm{a}})^{2-\tau}\right) +O(\tau^2)=$$
 $$= \frac{1}{4}a_k^2\frac{\de}{\de\lambda_k}\int_{B_{\rho}(\overline{x}_k)}\phi_{p_k,\lambda_k}^4+$$
 $$+ a_k^3\int_{B_{\rho}(\overline{x}_k)}(\widetilde{\Psi}_{\bm{p},\bm{\lambda},\bm{a}}-\Psi_{\bm{p},\bm{\lambda},\bm{a}})3\lambda_k^2U_{p_k}^3\circ\delta_{\lambda_k}\chi_{\overline{x}_k} +$$
 $$+ a_k^3\int_{B_{\rho}(\overline{x}_k)}(\widetilde{\Psi}_{\bm{p},\bm{\lambda},\bm{a}}-\Psi_{\bm{p},\bm{\lambda},\bm{a}})\lambda_k^3\frac{\de}{\de\lambda_k}(U_{p_k}^3\circ\delta_{\lambda_k})\chi_{\overline{x}_k} +$$
 $$- a_k^{4-\tau}\frac{1}{4-\tau}\int_{B_{\rho}(\overline{x}_k)}K\frac{\de}{\de\lambda_k}(\phi_{p_k,\lambda_k}^{4-\tau}) +$$
 $$- a_k\int_{B_{\rho}(\overline{x}_k)}K(\widetilde{\Psi}_{\bm{p},\bm{\lambda},\bm{a}}-\Psi_{\bm{p},\bm{\lambda},\bm{a}})^{3-\tau}U_{p_k}\circ\delta_{\lambda_k}\chi_{\overline{x}_k}+$$
 $$- a_k\int_{B_{\rho}(\overline{x}_k)}K(\widetilde{\Psi}_{\bm{p},\bm{\lambda},\bm{a}}-\Psi_{\bm{p},\bm{\lambda},\bm{a}})^{3-\tau}\lambda_k\frac{\de}{\de\lambda_k}(U_{p_k}\circ\delta_{\lambda_k}\chi_{\overline{x}_k}) =$$
 $$=O(\tau^2) + 3a_k^3 \lambda_k^{-4}\int_{B_{\lambda_k\rho}(\overline{x}_k)}\left(a_k\frac{1}{\lambda_k}A_{p_k}+\sum_{j\ne k}a_j\frac{1}{\lambda_j}G_{\overline{x}_j}(p_k) + O(\tau^{1/2}|x|)\right)\circ\delta_{\lambda_k^{-1}}\cdot\lambda_k^2U_{p_k}^3\chi_{\overline{x}_k} +$$
 $$+ a_k^3\int_{B_{\rho}(\overline{x}_k)}(\widetilde{\Psi}_{\bm{p},\bm{\lambda},\bm{a}}-\Psi_{\bm{p},\bm{\lambda},\bm{a}})\lambda_k^{2}\Xi(U_{p_k}^3)\circ\delta_{\lambda_k}\chi_{\overline{x}_k} +$$
 $$- a_k^{4-\tau}\int_{B_{\rho}(\overline{x}_k)}K\lambda_k^{3-\tau}U^{4-\tau}_{p_k}\circ\delta_{\lambda_k}\chi_{\overline{x}_k} +$$
 $$- a_k^{4-\tau}\frac{1}{4-\tau}\int_{B_{\rho}(\overline{x}_k)}K\lambda_k^{4-\tau}\frac{\de}{\de\lambda_k}(U^{4-\tau}_{p_k}\circ\delta_{\lambda_k}\chi_{\overline{x}_k}) +$$
 $$- a_k\lambda_k^{-4}\int_{B_{\lambda_k\rho}(\overline{x}_k)}(K(p_k)+O(|x|))\circ\delta_{\lambda_k^{-1}}\left(a_k\frac{1}{\lambda_k}A_{p_k}+\sum_{j\ne k}a_j\frac{1}{\lambda_j}G_{\overline{x}_j}(p_k) + O(|x|)\right)^{3-\tau}\circ\delta_{\lambda_k^{-1}}U_{p_k}\chi_{\overline{x}_k}\circ\delta_{\lambda_k^{-1}}dV+$$
 $$- a_k\int_{B_{\rho}(\overline{x}_k)}K(\widetilde{\Psi}_{\bm{p},\bm{\lambda},\bm{a}}-\Psi_{\bm{p},\bm{\lambda},\bm{a}})^{3-\tau}\Xi U_{p_k}\circ\delta_{\lambda_k}\chi_{\overline{x}_k} =$$
 $$=O(\tau^2)+ 3a_k^3 \left(a_k\frac{1}{\lambda_k^3}A_{p_k}+\sum_{j\ne k}a_j\frac{1}{\lambda_j\lambda_k^{2}}G_{\overline{x}_j}(p_k)\right)\left(\int_{\H^1}U^3 +o(1)\right)+$$
 $$+O\left(\lambda_k^{-2}\tau^{1/2}\int_{B_{\lambda_k\rho}(\overline{x}_k)}|x|\circ\delta_{\lambda_k^{-1}}\cdot (1+|x\cdot p_k^{-1}|^2)^3\chi_{\overline{x}_k}\right) +$$
 $$+ O\left(\tau^{1/2}\lambda_k^2\lambda_k^{-4}\int_{B_{\lambda_k\rho}(\overline{x}_k)}\Xi(U_{p_k}^3)\right) +$$
 $$- a_k^{4-\tau}\lambda_k^{3-\tau}\lambda_k^{-4}\int_{B_{\lambda_k\rho}(\overline{x}_k)}K\circ\delta_{\lambda_k^{-1}}U^{4-\tau}_{\delta_{\lambda_k}(p_k)}\chi_{\overline{x}_k}\circ\delta_{\lambda_k^{-1}} +$$
 $$- a_k^{4-\tau}\frac{1}{4-\tau}\int_{B_{\rho}(\overline{x}_k)}K\lambda_k^{3-\tau}\Xi(U^{4-\tau}_{p_k})\circ\delta_{\lambda_k}\chi_{\overline{x}_k} + O(\lambda_k^{-5})+O(\lambda_k^{-5})=$$
 $$=O(\tau^2)+ \left[\frac{24}{K(\overline{x}_k)^{3/2}} \left(\frac{2}{K(\overline{x}_k)^{1/2}}\frac{1}{\lambda_k^3}A_{p_k}+\sum_{j\ne k}\frac{2}{K(\overline{x}_j)^{1/2}}\frac{1}{\lambda_j\lambda_k^3}G_{\overline{x}_j}(p_k)\right)+o(1)\right]\left(\int_{\H^1}U^3 +o(1)\right)+$$
 $$+O(\tau^2)+0+$$
 $$- a_k^{4-\tau}\lambda_k^{-1-\tau}\int_{B_{\lambda_k\rho}(\overline{x}_k)}K\circ\delta_{\lambda_k^{-1}}U^{4-\tau}_{\delta_{\lambda_k}(p_k)}\chi_{\overline{x}_k}\circ\delta_{\lambda_k^{-1}} +$$
 $$ +a_k^{4-\tau}\frac{1}{4-\tau}\lambda_k^{3-\tau}\int_{B_{\rho}(\overline{x}_k)}\Xi KU^{4-\tau}_{p_k}\circ\delta_{\lambda_k}\chi_{\overline{x}_k} +$$
 $$+a_k^{4-\tau}\frac{1}{4-\tau}\lambda_k^{3-\tau}\int_{B_{\rho}(\overline{x}_k)}KU^{4-\tau}_{p_k}\circ\delta_{\lambda_k}\Xi\chi_{\overline{x}_k}+$$
 $$+ a_k^{4-\tau}\frac{1}{4-\tau}\lambda_k^{3-\tau}\int_{B_{\rho}(\overline{x}_k)}KU^{4-\tau}_{p_k}\circ\delta_{\lambda_k}\chi_{\overline{x}_k}\operatorname{div}\Xi=$$
 $$=O(\tau^2)+ \left[\frac{24}{K(\overline{x}_k)^{3/2}} \left(\frac{2}{K(\overline{x}_k)^{1/2}}\frac{1}{\lambda_k^3}A_{p_k}+\sum_{j\ne k}\frac{2}{K(\overline{x}_j)^{1/2}}\frac{1}{\lambda_j\lambda_k^3}G_{\overline{x}_j}(p_k)\right)+o(1)\right]\left(\int_{\H^1}U^3 +o(1)\right)+$$
 $$- a_k^{4-\tau}\lambda_k^{-1-\tau}\int_{B_{\lambda_k\rho}(\overline{x}_k)}K\circ\delta_{\lambda_k^{-1}}U^{4-\tau}_{\delta_{\lambda_k}(p_k)}\chi_{\overline{x}_k}\circ\delta_{\lambda_k^{-1}} +$$
 $$+a_k^{4-\tau}\frac{1}{4-\tau}\lambda_k^{3-\tau}\int_{B_1(\overline{x}^k)}(x\cerchio{X} +y\cerchio{Y}+2t\cerchio{T})\left[K(\overline{x}^k) + x(XK)(\overline{x}^k)+\right.$$
 $$+ y(YK)(\overline{x}^k) +xy(XYK)(\overline{x}^k) + \frac{1}{2}x^2(X^2K)(\overline{x}^k) + \frac{1}{2}y^2(Y^2K)(\overline{x}^k) + t(TK)(\overline{x}^k)+$$
 $$\left.+O(|x|^3)\right]U^{4-\tau}_{p_k}\circ\delta_{\lambda_k}\chi_{\overline{x}_k} +$$
 $$+O\left(\lambda_k^{3-\tau}\int_{B_{\rho}(\overline{x}_k)\setminus B_{\rho/2}(\overline{x}_k)}\lambda_k^{-(4-\tau)}\frac{1}{|x|^{4-\tau}}\right)+$$
 $$+ a_k^{4-\tau}\frac{1}{4}\left(1+\frac{\tau}{4}+O(\tau^2)\right)\lambda_k^{3-\tau}\lambda_k^{-4}\int_{B_{\lambda_k\rho}(\overline{x}_k)}K\circ\delta_{\lambda_k^{-1}}U^{4-\tau}_{p_k}\chi_{\overline{x}_k}\circ\delta_{\lambda_k^{-1}}(4+O(|x|))\circ\delta_{\lambda_k^{-1}}=$$
 $$=O(\tau^2)+ \left[\frac{24}{K(\overline{x}_k)^{3/2}} \left(\frac{2}{K(\overline{x}_k)^{1/2}}\frac{1}{\lambda_k^3}A_{p_k}+\sum_{j\ne k}\frac{2}{K(\overline{x}_j)^{1/2}}\frac{1}{\lambda_j\lambda_k^3}G_{\overline{x}_j}(p_k)\right)+o(1)\right]\left(\int_{\H^1}U^3 +o(1)\right)+$$
 $$+a_k^{4-\tau}\frac{1}{4-\tau}\lambda_k^{3-\tau}\int_{B_1(\overline{x}^k)}\left[x(XK)(\overline{x}^k)+xy(XYK)(\overline{x}^k) + x^2(X^2K)(\overline{x}^k) +\right.$$
 $$\left.+y(YK)(\overline{x}^k)+ xy(XYK)(\overline{x}^k) +y^2(Y^2K)(\overline{x}^k)+2t(TK)(\overline{x}^k) +O(|x|^3)\right]U^{4-\tau}_{p_k}\circ\delta_{\lambda_k}\chi_{\overline{x}_k} +$$
 $$+O(\lambda_k^{-7})+$$
 $$+ a_k^{4-\tau}\left(\frac{\tau}{4}+O(\tau^2)\right)\lambda_k^{-1-\tau}\left(K(\overline{x}^k)\int_{\H^1}U^4 + o(1) \right)+$$
 $$+ O\left(\lambda_k^{-1-\tau}\int_{B_{\lambda_k\rho}(\overline{x}_k)}U^{4-\tau}_{p_k}O(|x|)\circ\delta_{\lambda_k^{-1}}\right)=$$
 $$=O(\tau^2)+ \left[\frac{24}{K(\overline{x}_k)^{3/2}} \left(\frac{2}{K(\overline{x}_k)^{1/2}}\frac{1}{\lambda_k^3}A_{p_k}+\sum_{j\ne k}\frac{2}{K(\overline{x}_j)^{1/2}}\frac{1}{\lambda_j\lambda_k^3}G_{\overline{x}_j}(p_k)\right)+o(1)\right]\left(\int_{\H^1}U^3 +o(1)\right)+$$
 $$+a_k^{4-\tau}\frac{1}{4-\tau}\lambda_k^{3-\tau}\int_{B_1(\overline{x}^k)}\left[x^2(X^2K)(\overline{x}^k) +y^2(Y^2K)(\overline{x}^k)+O(|x|^3)\right]U^{4-\tau}_{p_k}\circ\delta_{\lambda_k}\chi_{\overline{x}_k} +$$
 $$+ \left(\frac{4}{K(\overline{x}_k)^2}+o(1)\right)\left(\frac{\tau}{4}+O(\tau^2)\right)\lambda_k^{-1-\tau}\left(K(\overline{x}^k)\int_{\H^1}U^4 + o(1) \right)+ O(\tau^2)=$$
 $$=O(\tau^2)+ \left[\frac{6}{K(\overline{x}_k)^{1/2}} \left(\frac{2}{K(\overline{x}_k)^{1/2}}\frac{1}{\lambda_k^3}A_{p_k}+\sum_{j\ne k}\frac{2}{K(\overline{x}_j)^{1/2}}\frac{1}{\lambda_j\lambda_k^{2}}G_{\overline{x}_j}(p_k)\right)\right]\left(\int_{\H^1}U^3 +o(1)\right)+$$
 $$+\left(\frac{1}{K(\overline{x}_k)^2}+o(1)\right)\lambda_k^{-3-\tau}\frac{1}{2}(1\cdot\Delta_bK(\overline{x}^k))\int_{\H^1}(x^2+y^2)U^4 +$$
 $$+ \tau\left(\frac{1}{K(\overline{x}_k)^2}+o(1)\right)\lambda_k^{-1}\left(K(\overline{x}^k)\int_{\H^1}U^4 + o(1) \right)=$$
 $$= -\frac{1}{\lambda_k^2}\sum_{j=1}^NM_{kj}(S)\frac{1}{\lambda_j} + \frac{1}{K(\overline{x}_k)}\frac{\tau}{\lambda_k}\int_{\H^1}U^4 +o(\tau^3)$$
 The function
 $$(\lambda_1,\ldots,\lambda_N)\mapsto -\frac{1}{\lambda_k^2}\sum_{j=1}^NM_{kj}(S)\frac{1}{\lambda_j} + \frac{\pi^2}{4K(\overline{x}_k)}\frac{\tau}{\lambda_k}\in\R^N$$
 is the gradient of $F$ defined as
 $$F(\lambda_1,\ldots,\lambda_N) = \frac{1}{2}\sum_{j,k=1}^NM_{kj}(S)\frac{1}{\lambda_j\lambda_k} + \sum_{k=1}^N\frac{\pi^2}{4K(\overline{x}_k)}\tau\log\lambda_k$$
 which is strictly convex, with strictly definite second differential, and such that $\nabla F$ has pexactly one zero, with components of order $\sim\tau^{-1/2}$.
 Therefore, up to changing the constant $A$ in the definition of $V'_{\tau}(S)$, we have proved that
 \begin{equation}\label{GradoDimostrazione3}
  \deg(\nabla_{\bm{\lambda}}\ci{J}_{\tau},\sum_{k=1}^N\Omega_{N+k},0) = 1.
 \end{equation}
 Finally Proposition \ref{DifferenzialeSecondoPositivo} implies that
 \begin{equation}\label{GradoDimostrazione4}
  \deg(\nabla_{v}\ci{J}_{\tau},\Omega_{3N+1},0) = 1.
 \end{equation}
 Putting all together formulas \eqref{GradoDimostrazione1}, \eqref{GradoDimostrazione2}, \eqref{GradoDimostrazione3} and \eqref{GradoDimostrazione4} we can apply Proposition \ref{GradoProdotto} to get the thesis.
\end{proof}

Finally we can prove Theorem \ref{Teorema2}.

\begin{proof}[Proof of Theorem \ref{Teorema2}]
 By Proposition \ref{DicotomiaSoluzioni}, applied with $V'_{\tau}(S)$ defined in formula \eqref{DefinizioneVtauprimo} instead of $U_{\tau}(S)$ by Lemma \ref{Lemmaa}, there exist $\tau_0>0$ and $R$ such that for $\tau\in(0,\tau_0]$ every zero of $\nabla\ci{J}_{\tau}$ is contaided in
 $$\Omega_R\cup\bigcup_{S\subseteq\operatorname{Crit}(K):M(S)>0}U_{\tau}(S).$$
 By Lemma \ref{LimitatezzaSoluzioniSottocritiche} there exists $C$ such that for $\tau=\tau_0$ every such solution is contained in $\Omega_C$.
 By Proposition \ref{ProposizioneGradoSottocritico} and notorious properties of the Leray-Schauder degree (properties (P.7) and (P.3) in Chapter 3 in \cite{AM}) it holds that
 $$-1=\deg(F_{3-\tau_0},\Omega_C,0)= \deg\left(F_{3-\tau_0},\Omega_R\cup\bigcup_{S\subseteq\operatorname{Crit}(K):M(S)>0}V'_{\tau_0}(S),0\right)=$$
 $$= \deg(F_{3-\tau_0},\Omega_R,0) + \sum_{S\subseteq\operatorname{Crit}(K):M(S)>0}\deg(F_{3-\tau_0},V'_{\tau_0}(S),0)$$
 so the thesis follows by the homotopy invariance property of the Leray-Schauder degree and Proposition \ref{GradoPuntiCriticiInfinito}.
\end{proof}

\appendix
\section{A lemma in algebraic topology}
\begin{proposizione}
 Let $\Omega_1\subset\R^{n_1},\ldots,\Omega_k\subset\R^{n_k}$ be contractible open sets and
 $$F_i:\Omega_1\times\cdots\times\Omega_k\to\R^{n_i}$$
 for $i=1,\ldots,k$ satisfying $F_i\ne 0$ on $\Omega_1\times\cdots\times\de\Omega_i\times\ldots\times\Omega_k$.
 Define $\bm{x}=(x_1,\ldots,x_k)\in\Omega_1\times\cdots\times\Omega_k$,
 $$\Omega_i^{\bm{x}}=\{x_1\}\times\ldots\times\Omega_i\times\ldots\times\{x_k\},$$
 $\bm{F}=(F_1,\ldots,F_k)$, $\bm{\Omega}=\Omega_1\times\cdots\times\Omega_k$.
 Then, given $p_i\notin F_i(\Omega_1\times\cdots\times\Omega_k)$ and defining $\bm{p}=(p_1,\ldots,p_k)$, $\deg(F_i,\Omega_i^{\bm{x}},p_i)$ is independent by $\bm{x}$, and
 $$\deg(\bm{F},\bm{\Omega},\bm{p}) = \prod_{i=1}^k\deg(F_i,\Omega_i^{\bm{x}},p_i)$$
\end{proposizione}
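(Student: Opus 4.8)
The plan is to deform $\bm F$, through an admissible homotopy, into a map whose $i$-th component depends on $x_i$ alone, and then to apply the classical multiplicativity of the Brouwer degree to the resulting product map. I read the admissibility hypothesis in the form that is actually needed for all the degrees to be defined, namely that $F_i(\bm x)\ne p_i$ whenever $x_i\in\partial\Omega_i$ (with no restriction on the remaining coordinates): since every point of $\partial\bm\Omega$ has at least one coordinate lying on the boundary of its factor, this simultaneously ensures $\bm p\notin\bm F(\partial\bm\Omega)$, so that $\deg(\bm F,\bm\Omega,\bm p)$ makes sense, and $p_i\notin F_i(\partial\Omega_i^{\bm x})$ for every $\bm x$, so that each $\deg(F_i,\Omega_i^{\bm x},p_i)$ makes sense.

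First I would establish the independence of $\deg(F_i,\Omega_i^{\bm x},p_i)$ from $\bm x$. Fixing $i$ and two points $\bm x,\bm x'$, I use the path-connectedness of each (contractible) $\Omega_j$ to join $(x_j)_{j\ne i}$ to $(x_j')_{j\ne i}$ by paths inside the factors; feeding these paths into $F_i$ yields a homotopy of maps $\Omega_i\to\R^{n_i}$ that, by the hypothesis above, never meets $p_i$ on $\partial\Omega_i$. Homotopy invariance of the degree then shows $\deg(F_i,\Omega_i^{\bm x},p_i)$ equals a constant $d_i$.

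Next I would decouple the factors. Fix a base point $\bm x^0\in\bm\Omega$ and, using contractibility, choose for each $j$ a continuous contraction $r_j^s\colon\overline{\Omega_j}\to\overline{\Omega_j}$ with $r_j^0=\id$ and $r_j^1\equiv x_j^0$. Define
\[
H^s_i(\bm x)=F_i\bigl(r_1^s(x_1),\dots,r_{i-1}^s(x_{i-1}),x_i,r_{i+1}^s(x_{i+1}),\dots,r_k^s(x_k)\bigr),
\]
so that $\bm H^0=\bm F$ and $\bm H^1=\bm G$ is the product map with components $g_i(x_i)=F_i(x_1^0,\dots,x_i,\dots,x_k^0)$. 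Admissibility is immediate: if $\bm x\in\partial\bm\Omega$, choose $i$ with $x_i\in\partial\Omega_i$; then the $i$-th argument of $H^s_i(\bm x)$ stays on $\partial\Omega_i$ for every $s$, so the hypothesis gives $H^s_i(\bm x)\ne p_i$ and hence $\bm H^s(\bm x)\ne\bm p$. Homotopy invariance yields $\deg(\bm F,\bm\Omega,\bm p)=\deg(\bm G,\bm\Omega,\bm p)$.

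Finally, for the product map $\bm G=g_1\times\cdots\times g_k$ a smooth approximation with regular values $p_i$ has block-diagonal Jacobian, whence the standard multiplicativity of the Brouwer degree gives $\deg(\bm G,\bm\Omega,\bm p)=\prod_{i=1}^k\deg(g_i,\Omega_i,p_i)$; since $g_i$ is precisely the restriction of $F_i$ to $\Omega_i^{\bm x^0}$, each factor is $d_i$, and combining the three identities proves the formula. I expect the only genuinely delicate point to be the bookkeeping around $\partial\bm\Omega$: one must fix the reading of the hypothesis so that the degrees are defined at the corners of the product (points with several coordinates on their factor boundaries) and verify that the decoupling homotopy, which relocates all but one coordinate, never meets $\bm p$ there. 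With the reading adopted above this reduces to keeping the distinguished coordinate on $\partial\Omega_i$, after which the multiplicativity step is classical.
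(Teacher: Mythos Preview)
Your proof is correct and follows essentially the same strategy as the paper: reduce via an admissible homotopy to the case where each $F_i$ depends only on $x_i$, then invoke multiplicativity of the degree for product maps. The only difference is that the paper phrases the last step through the homological definition of degree and the K\"unneth isomorphism, while you use the equivalent regular-value/block-diagonal Jacobian argument---which the paper itself explicitly mentions as an alternative route immediately after its proof.
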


\begin{proof}
 By invariance by admissible homotopy, we can assume that $F_i$ depends only on $x_i$, and therefore, by abuse of notation, that $\bm{F}(\bm{x})=(F_1(x_1),\ldots,F_k(x_k))$.
 Calling $\omega_i\in H_{n_i}(\Omega_i,\de\Omega_i)$, $\bm{\omega}\in H_n(\bm{\Omega},\de\bm{\Omega})$, $\eta_i\in H_{n_i}(\R^{n_i},\R^{n_i}\setminus\{p_i\})$ and $\bm{\eta}\in H_n(\R^n,\R^n\setminus\{\bm{p}\})$ (where $n=n_1+\ldots+n_k$) the respective fundamental classes, it is known that an equivalent definition of the degree is
 $$(F_i)_*\omega_i = \deg(F_i,\Omega_i^{\bm{x}},p_i)\eta_i,$$
 $$(\bm{F})_*\bm{\omega} = \deg(\bm{F},\bm{\Omega},\bm{p})\bm{\eta}.$$
 Then the thesis follows from the naturality of the Künneth isomorphism.
\end{proof}

We point out that an alternative proof of the above Proposition can be obtained by using the definition of the degree as a sum of signs on the counterimages of a regular point, and an elementary computation.

\begin{proposizione}\label{GradoProdotto}
 Let $E_1,\ldots,E_k$ be Banach spaces, $\Omega_1\subset E_1,\ldots,\Omega_k\subset E_k$ be contractible open sets, and
 $$G_i:\Omega_1\times\cdots\times\Omega_k\to E_i$$
 for $i=1,\ldots,k$ be compact maps (such that $I-G_i\ne 0$) on $\Omega_1\times\cdots\times\de\Omega_i\times\ldots\times\Omega_k$.
 Define $\bm{x}=(x_1,\ldots,x_k)\in\Omega_1\times\cdots\times\Omega_k$,
 $$\Omega_i^{\bm{x}}=\{x_1\}\times\ldots\times\Omega_i\times\ldots\times\{x_k\},$$
 $\bm{G}=(G_1,\ldots,G_k)$, $\bm{\Omega}=\Omega_1\times\cdots\times\Omega_k$.
 Then, given $p_i\notin (I-G_i)(\Omega_1\times\cdots\times\Omega_k)$ and defining $\bm{p}=(p_1,\ldots,p_k)$, $\deg(F_i,\Omega_i^{\bm{x}},p_i)$ is independent by $\bm{x}$, and
 $$\deg(I-\bm{G},\bm{\Omega},\bm{p}) = \prod_{i=1}^k\deg(I-G_i,\Omega_i^{\bm{x}},p_i)$$
\end{proposizione}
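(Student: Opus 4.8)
The plan is to reduce to the finite-dimensional Proposition proved just above, by first using contractibility to deform $\bm{G}$ into a coordinatewise-split map and then passing to a finite-dimensional subspace through the approximation that defines the Leray--Schauder degree. I first record that $\deg(I-G_i,\Omega_i^{\bm{x}},p_i)$ is independent of $\bm{x}$: since each $\Omega_j$ is contractible, hence connected, varying the frozen coordinates $x_j$ ($j\ne i$) along a path in $\prod_{j\ne i}\Omega_j$ yields an admissible homotopy of the restricted compact map $x_i\mapsto G_i(\ldots,x_i,\ldots)$ on $\Omega_i$, admissibility being exactly the hypothesis that $I-G_i\ne p_i$ on $\Omega_1\times\cdots\times\partial\Omega_i\times\cdots\times\Omega_k$; homotopy invariance then makes the degree locally constant, hence constant.

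Next I would deform $\bm{G}$ into a coordinatewise-split map. Fixing contractions $h_j$ of each $\Omega_j$ to a base point $c_j$, I define the homotopy $\bm{H}(t,\bm{x})$ whose $i$-th component is $G_i$ evaluated with $x_i$ in the $i$-th slot and $h_j(t,x_j)$ in every other slot; then $\bm{H}(0,\cdot)=\bm{G}$ while $\bm{H}(1,\cdot)$ is the split map $\bm{x}\mapsto(G_1^0(x_1),\ldots,G_k^0(x_k))$ with $G_i^0(x_i)=G_i(c_1,\ldots,x_i,\ldots,c_k)$. On $\partial\bm{\Omega}$ some coordinate $x_\ell$ lies in $\partial\Omega_\ell$, and there the $\ell$-th component of $I-\bm{H}(t,\cdot)$ differs from $p_\ell$, again by the hypothesis; thus $\bm{H}$ is admissible, and each $\bm{H}(t,\cdot)$ is compact because $\bm{G}$ is. By homotopy invariance I may therefore assume $\bm{G}(\bm{x})=(G_1(x_1),\ldots,G_k(x_k))$.

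With $\bm{G}$ split, I would invoke the finite-dimensional approximation defining the Leray--Schauder degree. Choose finite-rank maps $G_i^{(n)}$ with $\sup_{\overline{\Omega_i}}\|G_i^{(n)}-G_i\|\to 0$ and range in a finite-dimensional $V_i\subseteq E_i$ containing $p_i$, and set $V=V_1\times\cdots\times V_k$. For $n$ large, $\deg(I-\bm{G},\bm{\Omega},\bm{p})$ equals the Brouwer degree of $(I-\bm{G}^{(n)})|_V$ on $\bm{\Omega}\cap V=(\Omega_1\cap V_1)\times\cdots\times(\Omega_k\cap V_k)$, and each $\deg(I-G_i,\Omega_i^{\bm{x}},p_i)$ equals the Brouwer degree of $(I-G_i^{(n)})|_{V_i}$ on $\Omega_i\cap V_i$. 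Since $(I-\bm{G}^{(n)})|_V$ still splits coordinatewise, the multiplicativity of the Brouwer degree on product domains gives $\deg(I-\bm{G},\bm{\Omega},\bm{p})=\prod_i\deg(I-G_i,\Omega_i^{\bm{x}},p_i)$.

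The main obstacle is the bookkeeping at the finite-dimensional stage: one must arrange the approximations so that the reducing subspace is genuinely a product $V_1\times\cdots\times V_k$ and so that, for $n$ large, $p_i$ avoids the image under $I-G_i^{(n)}$ of the boundary of $\Omega_i\cap V_i$ inside $V_i$, so that the elementary Brouwer product formula applies. This is why the coordinatewise splitting must be performed first, while the $\Omega_j$ are still known to be contractible: the slices $\Omega_i\cap V_i$ need not be contractible, but once the map splits, contractibility is no longer needed and plain multiplicativity of the Brouwer degree suffices.
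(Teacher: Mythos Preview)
Your proposal is correct and follows the same strategy the paper sketches: reduce to the finite-dimensional product formula via the finite-rank approximations that define the Leray--Schauder degree. The paper's proof is a single sentence (``applying the former theorem to the finite-dimensional approximation''); you have filled in the details, and in particular your observation that the contractibility-based homotopy to a coordinatewise-split map should be performed \emph{before} passing to the finite-dimensional subspace---because the slices $\Omega_i\cap V_i$ need not inherit contractibility---is a genuine clarification of a point the paper's sketch leaves implicit.
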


\begin{proof}
 The construction that leads to the definition of the Leray-Schauder degree (see \cite{AM}, section 3.4) uses finite dimensional approximations. It can be easily checked that applying the former theorem to it allows to get the thesis.
\end{proof}

\section{Computations}
\begin{lemma}\label{IntegraleU3}
 The following equalities hold:
 $$\int_{\H^1}U^3= 2\pi,$$
 $$\int_{\H^1}(x^2+y^2)U^4 = \frac{\pi^2}{4},$$
 $$\int_{\H^1}U^4 = \frac{\pi^2}{4}.$$
\end{lemma}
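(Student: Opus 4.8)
The plan is to reduce each of the three integrals to elementary one-dimensional integrals by exploiting the rotational symmetry of $U$ in the $z$-variable and carrying out the $t$-integration first. Recall that $U=(t^2+(1+|z|^2)^2)^{-1/2}$ depends on $z=x+iy$ only through $|z|^2=x^2+y^2$, and that on $\H^1=\C\times\R$ the integration is with respect to Lebesgue measure $dx\,dy\,dt$ (equivalently the bi-invariant Haar measure, in the normalization used throughout); with this normalization the stated numerical values come out exactly, which also fixes the convention.

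First I would pass to cylindrical coordinates $z=re^{i\theta}$, so that $dx\,dy=r\,dr\,d\theta$ and every integrand is independent of $\theta$; the angular integration therefore contributes a factor $2\pi$ in each case, and in the weighted integral the factor $x^2+y^2=r^2$ is likewise $\theta$-independent. Setting $b:=1+r^2$, this leaves one-dimensional integrals in $t$ of the form $\int_{\R}(t^2+b^2)^{-3/2}\,dt$ for $U^3$, and $\int_{\R}(t^2+b^2)^{-2}\,dt$ for both $U^4$ and $r^2U^4$.

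Next I would evaluate these two standard integrals. The first, $\int_{\R}(t^2+b^2)^{-3/2}\,dt=2/b^2$, is rational and follows from the primitive $t/(b^2\sqrt{t^2+b^2})$; the second, $\int_{\R}(t^2+b^2)^{-2}\,dt=\pi/(2b^3)$, is obtained via the substitution $t=b\tan\phi$, and is precisely the source of the extra factor $\pi$ present in the $U^4$ integrals but absent in the $U^3$ one. Substituting back and setting $s=r^2$ reduces everything to integrals of the type $\int_0^\infty(1+s)^{-k}\,ds$ and $\int_0^\infty s(1+s)^{-3}\,ds$, all immediate. One finds $\int_{\H^1}U^3=4\pi\int_0^\infty r(1+r^2)^{-2}\,dr=2\pi$, then $\int_{\H^1}U^4=\pi^2\int_0^\infty r(1+r^2)^{-3}\,dr=\pi^2/4$, and finally $\int_{\H^1}(x^2+y^2)U^4=\pi^2\int_0^\infty r^3(1+r^2)^{-3}\,dr=\pi^2/4$.

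There is no genuine analytic obstacle here; the computation is entirely elementary. The only point requiring real care—and the one I would state explicitly—is the normalization of the volume element, since these three constants are exactly the numerical factors fed into the Pohozaev-type identity of Proposition \ref{PropMatriceDefinita} and the degree computation of Proposition \ref{GradoPuntiCriticiInfinito}, so an error in the measure or in the value of the $t$-integral would propagate directly into the definition \eqref{DefinizioneMatrice} of $M(S)$. As a consistency check I would verify that the two separate routes to the $U^4$ integrals agree, which is exactly the coincidence $\int_{\H^1}(x^2+y^2)U^4=\int_{\H^1}U^4$ asserted by the statement.
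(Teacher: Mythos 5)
Your computation is correct, and all three values check out: with $b=1+r^2$ one has $\int_{\R}(t^2+b^2)^{-3/2}dt=2/b^2$ and $\int_{\R}(t^2+b^2)^{-2}dt=\pi/(2b^3)$, after which the radial integrals $\int_0^\infty r(1+r^2)^{-2}dr=\tfrac12$, $\int_0^\infty r(1+r^2)^{-3}dr=\tfrac14$ and $\int_0^\infty r^3(1+r^2)^{-3}dr=\tfrac14$ give exactly $2\pi$, $\pi^2/4$ and $\pi^2/4$. There is nothing to compare against: the paper states Lemma \ref{IntegraleU3} in the appendix with no proof at all, evidently regarding it as elementary, so your write-up fills a gap the author left open, and cylindrical coordinates plus the two standard $t$-integrals is the natural (essentially the only) route. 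One remark on the normalization point you raise: rather than fixing the measure a posteriori by matching the stated constants, you can fix it a priori by noting that the paper integrates over $\H^1$ with respect to the Haar measure $dx\,dy\,dt$ of $\C\times\R$, the same measure for which the dilations $\delta_\lambda$ have Jacobian $\lambda^4$; this is used explicitly in Section 3, e.g.\ in the identity $\int_{\H^1}(U\circ\delta_{K^{1/2}/2})^3=16K^{-2}\int_{\H^1}U^3$ in the proof that $a_k^k=32\pi/K(\overline{x}^k)$, so your choice of measure is the internally consistent one and not merely reverse-engineered from the answer.
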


\begin{lemma}\label{StimaDifferenzaSublaplaciano}
 In the hypotheses of Proposition \ref{GradienteNullo}
 $$\int_{B_{\rho}(x_i)}(\Delta_bu_i-\cerchio{\Delta}_bu_i)\cerchio{Z}_ru_id\cerchio{V}=O(M_i^{-1}).$$
\end{lemma}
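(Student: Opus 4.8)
The plan is to write $\Delta_b=\cerchio{\Delta}_b+E$, where $E$ measures the deviation of the intrinsic sublaplacian from the flat Heisenberg one, and to estimate $\int_{B_{\rho}(x_i)}|Eu_i|\,|\cerchio{Z}_ru_i|\,d\cerchio{V}$, which dominates the signed integral in the statement. The construction of pseudohermitian normal coordinates (see \cite{JL3}, and \cite{CMY1} in dimension three) makes the structure osculate the flat one at $x_i$, so that schematically
$$Eu_i = O(|x|^2)\,\cerchio{\Delta}_bu_i + O(|x|^2)\,\cerchio{T}u_i + O(|x|)\,\cerchio{Z}u_i + O(|x|)\,\cerchio{\con{Z}}u_i,$$
the weight-two coefficients vanishing to second order and the weight-one coefficients to first order in the Korányi norm. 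First I would record this expansion precisely (it is contained in the frame and connection estimates of the appendix of \cite{Af}) and then split $B_{\rho}(x_i)$ into the bubble core $B_{r_i}(x_i)$ and the annulus $B_{\rho}(x_i)\setminus B_{r_i}(x_i)$, with $r_i=R_iM_i^{-\frac{p_i-1}{2}}$ as in Proposition \ref{StimeCitazione}.

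On the annulus I use the pointwise estimates of Proposition \ref{StimeCitazione}: $|\cerchio{\Delta}_bu_i|,|\cerchio{T}u_i|\lesssim M_i^{-1}|x|^{-4}$ and $|\cerchio{Z}u_i|\lesssim M_i^{-1}|x|^{-3}$, whence, inserting the vanishing orders above, $|Eu_i(x)|\lesssim M_i^{-1}|x|^{-2}$. Since $\cerchio{Z}_r=\cerchio{Z}-2i\con{z}\cerchio{T}$, the same estimates give $|\cerchio{Z}_ru_i|\lesssim M_i^{-1}|x|^{-3}$, so the integrand is $\lesssim M_i^{-2}|x|^{-5}$. Integrating against $d\cerchio{V}$, of homogeneous dimension four, yields $M_i^{-2}\int_{r_i}^{\rho}\varrho^{-2}\,d\varrho\lesssim M_i^{-2}r_i^{-1}$, which equals $M_i^{-1}R_i^{-1}(M_i^{\tau_i})^{-1/2}$ and is therefore $o(M_i^{-1})$, using $R_i\to\infty$ and $M_i^{\tau_i}\to1$ from Proposition \ref{StimeCitazione}.

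On the core I would rescale by $y=\delta_{\lambda_i}(x_i^{-1}\cdot x)$ with $\lambda_i=M_i^{\frac{p_i-1}{2}}$, so that $u_i=M_iv_i$ with $v_i\to U\circ\delta_{K(0)^{1/2}/2}$ in $C^k_{\mathrm{loc}}$ by Proposition \ref{BlowUpIsolatiBolle}, and $B_{r_i}(x_i)$ becomes $\{|y|\lesssim R_i\}$. A weight-one flat derivative scales by $\lambda_i$, a weight-two one by $\lambda_i^2$, the coefficients $O(|x|^k)$ become $O(\lambda_i^{-k}|y|^k)$, and $d\cerchio{V}(x)=\lambda_i^{-4}d\cerchio{V}(y)$; hence each term of $Eu_i\cdot\cerchio{Z}_ru_i$ becomes $M_i^2\lambda_i^{-3}$ times an integrand such as $|y|^2|\cerchio{T}v_i||\cerchio{Z}_rv_i|$ or $|y|\,|\cerchio{Z}v_i||\cerchio{Z}_rv_i|$. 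Using $U\sim|y|^{-2}$, $\cerchio{Z}U\sim|y|^{-3}$ and $\cerchio{\Delta}_bU,\cerchio{T}U\sim|y|^{-4}$, all these integrands decay like $|y|^{-5}$ and are thus integrable on $\H^1$ against $d\cerchio{V}$; the core therefore contributes $O(M_i^2\lambda_i^{-3})=O(M_i^{-1}(M_i^{\tau_i})^{3/2})=O(M_i^{-1})$. Adding the two regions gives the claim, the core being the dominant contribution.

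The main obstacle is the first step, namely pinning down the vanishing orders of the coefficients of $E$. The delicate point is the $\cerchio{T}$-direction: $\cerchio{T}u_i$ is the most singular derivative (of weight two) on the annulus, and the estimate closes only if its coefficient vanishes to second order, producing the contribution $M_i^{-1}|x|^{-2}$ used above. Were this coefficient merely $O(|x|)$, the corresponding annular integral would be $M_i^{-2}\int_{r_i}^{\rho}\varrho^{-3}\,d\varrho\sim M_i^{-2}r_i^{-2}\sim R_i^{-2}$, which, since $R_i=o(\log M_i)$, need not be $O(M_i^{-1})$. Verifying from the normal-coordinate osculation that the weight-two part of $E$ genuinely vanishes to second order is thus the crux of the proof.
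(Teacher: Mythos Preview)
Your proposal is correct and follows essentially the same approach as the paper: expand $\Delta_b-\cerchio{\Delta}_b$ via the frame estimates in the appendix of \cite{Af} (the paper cites Lemma A.5 there), split $B_\rho(x_i)$ into the core $B_{r_i}(x_i)$ and the annulus, rescale on the core and use the pointwise bounds of Proposition~\ref{StimeCitazione} on the annulus. Your identification of the crux is also on target: the paper's expansion indeed has the $(u_i)_{,0}$ coefficient of order $|x|^2$, which is exactly what makes the annular estimate close. One small remark: the precise expansion the paper uses contains a few more terms than your schematic, namely $|x|^2|(u_i)_{,11}|$, $|x|^3|(u_i)_{,01}|$ and $|x|^6|(u_i)_{,00}|$, but with the given weights these give contributions of the same or smaller order, so once you record the full expansion from \cite{Af} your argument goes through unchanged.
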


\begin{proof}
 Using Lemma A.5 in \cite{Af},
 $$\int_{B_{\rho}(x_i)}(\Delta_bu_i-\cerchio{\Delta}_bu_i)\cerchio{Z}_ru_id\cerchio{V}=$$
 $$= O\left(\int_{B_{\rho}(x_i)}\left(|x||(u_i)_{,1}|+ |x||(u_i)_{,\con{1}}| + |x|^2|(u_i)_{,0}|+ |x|^2|(u_i)_{,1\con{1}}| +\right.\right.$$
 $$\left.\left.+ |x|^2|(u_i)_{,11}| +|x|^2|(u_i)_{,\con{1}\con{1}}|+ |x|^3|(u_i)_{,01}| + |x|^3|(u_i)_{,0\con{1}}| +|x|^6|(u_i)_{,00}|\right)\cerchio{Z}_ru_i\right)=$$
 $$=O\left(M_i^{-2(p_i-1)}\int_{B_{R_i}(x_i)}\left(M_i^{-\frac{p_i-1}{2}}|x|M_i^{\frac{p_i-1}{2}}\left(u_i\circ\delta_{M_i^{-\frac{p_i-1}{2}}}\right)_{,1} +\right.\right.$$
 $$+M_i^{-(p_i-1)}|x|^2M_i^{p_i-1}\left(u_i\circ\delta_{M_i^{-\frac{p_i-1}{2}}}\right)_{,0}+M_i^{-3\frac{p_i-1}{2}}|x|^3M_i^{3\frac{p_i-1}{2}}\left(u_i\circ\delta_{M_i^{-\frac{p_i-1}{2}}}\right)_{,01} +$$
 $$\left.\left.+  M_i^{-3(p_i-1)}|x|^6M_i^{2(p_i-1)}\left(u_i\circ\delta_{M_i^{-\frac{p_i-1}{2}}}\right)_{,00}\right)M_i^{\frac{p_i-1}{2}}\cerchio{Z}_r\left(u_i\circ\delta_{M_i^{-\frac{p_i-1}{2}}}\right)\right) + $$
 $$+O\left(\int_{B_{\rho}(x_i)\setminus B_{r_i}(x_i)}\left(|x|M_i^{-1}|x|^{-3} + |x|^2M_i^{-1}|x|^{-4} +|x|^3M_i^{-1}|x|^{-5} +|x|^6M_i^{-1}|x|^{-6}\right)M_i^{-1}|x|^{-3}\right)=$$
 $$=O\left(M_i^{-2(p_i-1)}\int_{B_{R_i}(x_i)}\left(|x|M_i(1+|x|)^{-3} + M_i^{-(p_i-1)}|x|^6M_i(1+|x|)^{-6}\right)M_i^{\frac{p_i-1}{2}}M_i(1+|x|)^{-3}\right) + $$
 $$+O\left(M_i^{-2}\int_{B_{\rho}(x_i)\setminus B_{r_i}(x_i)}|x|^{-5}\right) =$$
 $$=O\left(M_i^{-2}\int_{B_{R_i}(x_i)}\left(M_i|x|(1+|x|)^{-6} + M_i^{-1}(1+|x|)^{-3}\right)\right) + O\left(M_i^{-2}r_i^{-1}\right)=$$
 $$=O(M_i^{-1}).$$
\end{proof}

\begin{lemma}\label{LemmauZru}
 In the hypotheses of Proposition \ref{GradienteNullo}
 $$\int_{B_{\rho}(x_i)}Ru_i\cerchio{Z}_ru_i\chi d\cerchio{V}=O\left(M_i^{-1}\right).$$
\end{lemma}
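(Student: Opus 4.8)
The plan is to exploit the fact that $\cerchio{Z}_r$ is divergence-free with respect to the flat volume $d\cerchio{V}$ in order to integrate by parts, turning the integrand $u_i\cerchio{Z}_ru_i$ (a product of $u_i$ with a first derivative) into one involving only $u_i^2$, which can then be controlled by the pointwise decay estimates alone, with no gradient estimates needed.

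First I would write $u_i\cerchio{Z}_ru_i=\tfrac12\cerchio{Z}_r(u_i^2)$, using that $\cerchio{Z}_r$ is a first order derivation and $u_i$ is real-valued. In pseudohermitian normal coordinates the osculating field $\cerchio{Z}_r=\de_z-i\con{z}\,\de_t$ has vanishing divergence with respect to $d\cerchio{V}$ (equivalently, $\cerchio{Z}_r$ is right invariant and $\H^1$ is unimodular, so its flow preserves Haar measure). Since $\chi$ is compactly supported in $B_{\rho}(x_i)$, integration by parts then gives
\[
 \int_{B_{\rho}(x_i)}Ru_i\cerchio{Z}_ru_i\,\chi\,d\cerchio{V}=-\frac12\int_{B_{\rho}(x_i)}u_i^2\,\cerchio{Z}_r(R\chi)\,d\cerchio{V}.
\]

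Next I would observe that $\cerchio{Z}_r(R\chi)=(\cerchio{Z}_rR)\chi+R\,\cerchio{Z}_r\chi$ is bounded on $B_{\rho}(x_i)$: both $R$ and $\chi$ are smooth, and the only unbounded coefficient of $\cerchio{Z}_r$ is $\con{z}$, which is $O(\rho)$ on the coordinate ball. Hence the problem reduces to proving $\int_{B_{\rho}(x_i)}u_i^2\,d\cerchio{V}=o(M_i^{-1})$. I would split this integral over the inner ball $B_{r_i}(x_i)$ and the annulus $B_{\rho}(x_i)\setminus B_{r_i}(x_i)$, with $r_i=R_iM_i^{-(p_i-1)/2}$. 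On the inner ball I would use only that $x_i$ is a maximum, so $u_i\le M_i$, together with $\Vol(B_{r_i}(x_i))\lesssim r_i^4\sim R_i^4M_i^{-2(p_i-1)}$; since $M_i^{\tau_i}\to1$ by Proposition \ref{StimeCitazione} and $R_i\lesssim\log M_i$, this contributes $\int_{B_{r_i}}u_i^2\le M_i^2\Vol(B_{r_i})\lesssim R_i^4M_i^{-2}=o(M_i^{-1})$. On the annulus I would invoke the decay estimate $u_i(x)\le CM_i^{-1}d(x,x_i)^{-2}$ from Proposition \ref{StimeCitazione}, which yields $\int_{B_{\rho}\setminus B_{r_i}}u_i^2\lesssim M_i^{-2}\int_{r_i}^{\rho}s^{-1}\,ds\sim M_i^{-2}\log M_i=o(M_i^{-1})$, the region where $\cerchio{Z}_r\chi\ne0$ being contained in this annulus.

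The only genuinely delicate point is the integration by parts in the first step: I must justify that $\cerchio{Z}_r$ is divergence-free for the osculating volume $d\cerchio{V}$ and that no boundary term survives. The latter is immediate from the compact support of $\chi$; the former follows from a direct computation in coordinates, writing $\cerchio{Z}_r=\tfrac12\de_x-\tfrac{i}{2}\de_y-(ix+y)\de_t$ and checking $\div_{\cerchio{V}}\cerchio{Z}_r=0$. This is precisely where the right invariance of $\cerchio{Z}_r$, already exploited through $[\cerchio{Z},\cerchio{Z}_r]=0$ in Proposition \ref{GradienteNullo}, enters. Everything else is a routine application of the blow-up estimates, and in fact the argument yields the stronger conclusion that the integral is $o(M_i^{-1})$.
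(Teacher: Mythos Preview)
Your argument is correct and takes a genuinely different route from the paper's. The paper proceeds by direct estimation: it rescales on the inner ball $B_{r_i}(x_i)$ and invokes the bubble profile from Proposition~\ref{BlowUpIsolatiBolle} together with the gradient decay $|\cerchio{Z}_ru_i|\lesssim M_i^{-1}|x|^{-3}$ from Proposition~\ref{StimeCitazione} on the outer annulus, bounding the product $u_i\cdot\cerchio{Z}_ru_i$ pointwise. You instead integrate by parts at the outset, using that $\cerchio{Z}_r$ is divergence-free for $d\cerchio{V}$, and thereby reduce to controlling $\int u_i^2$ against a bounded weight; this dispenses with any gradient information on $u_i$ and needs only the zeroth-order decay. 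The paper's direct approach is in keeping with the other lemmas in the appendix (all of which feed pointwise derivative bounds into integrals), while your approach is more economical here and, as you observe, actually yields $o(M_i^{-1})$ rather than merely $O(M_i^{-1})$---the paper's inner-ball contribution is genuinely of size $\sim M_i^{-1}$ because $\int_{\H^1}(1+|x|)^{-5}$ is finite but not small.
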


\begin{proof}
 $$\int_{B_{\rho}(x_i)}Ru_i\cerchio{Z}_ru_i\chi d\cerchio{V}=$$
 $$=M_i^{-2(p_i-1)}\int_{B_{R_i}(x_i)}R\circ\delta_{M_i^{-\frac{p_i-1}{2}}}u_i\circ\delta_{M_i^{-\frac{p_i-1}{2}}}M_i\cerchio{Z}_ru_i\circ\delta_{M_i^{-\frac{p_i-1}{2}}}\chi\circ\delta_{M_i^{-\frac{p_i-1}{2}}} +$$
 $$+\int_{B_{\rho}(x_i)\setminus B_{r_i}(x_i)}Ru_i\cerchio{Z}_ru_i\chi d\cerchio{V}=$$
 $$= O\left(M_i^{-2(p_i-1)}M_i\int_{B_{R_i}(x_i)}M_i(1+|x|)^{-2}M_i(1+|x|)^{-3}\right)+$$
 $$+O\left(\int_{B_{\rho}(x_i)\setminus B_{r_i}(x_i)}M_i^{-1}|x|^{-2}M_i^{-1}|x|^{-3}d\cerchio{V}\right)=$$
 $$= O\left(M_i^{-1}\right)+O\left(M_i^{-2}r_i^{-1}\right)=O\left(M_i^{-1}\right).$$
\end{proof}

\begin{lemma}\label{StimaIntegralex1u4}
 If $x_i\to\overline{x}$ is an isolated simple blow up point for the sequence of solutions $u_i$ and if $0\le\alfa<4$ then
 $$\int_{B_{\rho}(x_i)}|x|^{\alfa}u_i^{p_i+1}d\cerchio{V}=O\left(M_i^{-\alfa}\right).$$
\end{lemma}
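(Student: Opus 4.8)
The plan is to rescale around the blow-up point at the concentration scale and reduce the integral to a fixed convergent model integral on $\H^1$. Working in pseudohermitian normal coordinates centered at $x_i$ (so that $x_i$ is the origin and $|x|$ is the Korányi distance to $x_i$), set $\beta_i=M_i^{-\frac{p_i-1}{2}}$ and substitute $x=\delta_{\beta_i}(y)$. Since the Korányi norm is homogeneous of degree one and $d\cerchio V$ is the flat Heisenberg volume, which scales by $\beta_i^4$ under $\delta_{\beta_i}$, writing $v_i(y)=M_i^{-1}u_i(\delta_{\beta_i}(y))$ gives
$$\int_{B_{\rho/2}(x_i)}|x|^\alfa u_i^{p_i+1}\,d\cerchio V=\beta_i^{\alfa+4}M_i^{p_i+1}\int_{\{|y|\le\frac{\rho}{2}\beta_i^{-1}\}}|y|^\alfa v_i(y)^{p_i+1}\,d\cerchio V(y).$$
A direct computation with $\tau_i=3-p_i$ shows that the prefactor equals $\beta_i^{\alfa+4}M_i^{p_i+1}=M_i^{-\alfa+\tau_i\frac{\alfa+2}{2}}$, which is $O(M_i^{-\alfa})$ because $M_i^{\tau_i}\to1$ by Proposition \ref{StimeCitazione}. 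Thus it suffices to bound the rescaled integral uniformly in $i$.

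The key step is the pointwise bound $v_i(y)\lesssim(1+|y|)^{-2}$, valid on $\{|y|\le\frac{\rho}{2}\beta_i^{-1}\}$ uniformly in $i$. For $|y|\le1$ I would use Proposition \ref{BlowUpIsolatiBolle}: for $i$ large $B_1\subset B_{R_i}$ and $v_i$ converges uniformly to the bubble $U\circ\delta_{K(0)^{1/2}/2}$, so $v_i\lesssim1$ there. For $1\le|y|\le\frac{\rho}{2}\beta_i^{-1}$ I would rescale the isolated-simple estimate $u_i(x)\le CM_i^{-1}d(x,x_i)^{-2}$ of Proposition \ref{LimiteBlowUp}: since $\beta_i^{-2}=M_i^{p_i-1}$, this becomes $v_i(y)\le CM_i^{-\tau_i}|y|^{-2}\lesssim|y|^{-2}$. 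Combining the two regimes yields $v_i(y)\lesssim(1+|y|)^{-2}$ throughout. (The subsequence in Proposition \ref{BlowUpIsolatiBolle} is harmless, since an $O(M_i^{-\alfa})$ bound follows from the usual argument that every subsequence admits a further subsequence on which the estimate holds.)

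Given this, $\int_{\{|y|\le\frac{\rho}{2}\beta_i^{-1}\}}|y|^\alfa v_i^{p_i+1}\,d\cerchio V\lesssim\int_{\H^1}|y|^\alfa(1+|y|)^{-2(p_i+1)}\,d\cerchio V$, and in homogeneous dimension $4$ this integral equals, up to a constant, $\int_0^\infty r^{\alfa+3}(1+r)^{-2(p_i+1)}\,dr$, which converges at the origin since $\alfa\ge0$ and at infinity precisely when $\alfa<2p_i-2$; as $p_i\to3$ this holds for every $\alfa<4$ once $i$ is large, and the bound is uniform in $i$. Finally the leftover shell $\frac{\rho}{2}\le|x|\le\rho$ is handled directly: there $u_i\le CM_i^{-1}$ by Proposition \ref{StimeCitazione}, so its contribution is $O(M_i^{-(p_i+1)})=o(M_i^{-\alfa})$. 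Adding the two pieces gives the claim.

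The main obstacle is the uniform pointwise control of $v_i$: neither the bubble approximation, which degenerates for $|y|\gtrsim R_i$, nor the decay estimate, which is useless near $y=0$ where it only gives $|y|^{-2}$ and would make the weighted integral diverge for $\alfa<4$, suffices alone, so the two must be patched together on the overlap region $|y|\sim1$. One must then verify that the resulting model integral converges uniformly in $p_i$ near $3$ — and this last point is exactly where the hypothesis $\alfa<4$ is essential, as it is the threshold below which $\int_0^\infty r^{\alfa+3}(1+r)^{-8}\,dr$ is finite in the limit $p_i\to3$.
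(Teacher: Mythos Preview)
Your proof is correct and follows essentially the same strategy as the paper's: rescale by $\beta_i=M_i^{-(p_i-1)/2}$, control the rescaled function using the bubble profile near the origin and the isolated-simple decay estimate away from it, then integrate against the weight $|y|^\alfa$. The paper splits the integral at the radius $r_i=R_i\beta_i$ and estimates the two pieces separately (using Proposition~\ref{BlowUpIsolatiBolle} on $B_{r_i}$ and the pointwise decay from Proposition~\ref{StimeCitazione} on the annulus), whereas you first patch the two regimes into a single uniform bound $v_i(y)\lesssim(1+|y|)^{-2}$ and then integrate once; the underlying ingredients and the convergence threshold $\alfa<4$ are identical.
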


\begin{proof}
 $$\int_{B_{\rho}(x_i)}|x|^{\alfa}u_i^{p_i+1}d\cerchio{V}=$$
 $$=O\left(M_i^{-2(p_i-1)}\int_{B_{R_i}(x_i)}M_i^{-\frac{p_i-1}{2}\alfa}|x|\left(u_i\circ\delta_{M_i^{-\frac{p_i-1}{2}}}\right)^{p_i+1}d\cerchio{V}+\int_{B_{\rho}(x_i)\setminus B_{r_i}(x_i)}|x|^{\alfa}u_i^{p_i+1}d\cerchio{V}\right)=$$
 $$=O\left(M_i^{-4-\alfa}\int_{B_{R_i}(x_i)}|x|^{\alfa}M_i^{p_i+1}(1+|x|)^{-2(p_i+1)}d\cerchio{V}+\int_{B_{\rho}(x_i)\setminus B_{r_i}(x_i)}|x|^{\alfa}M_i^{-(p_i+1)}|x|^{-2(p_i+1)}d\cerchio{V}\right)=$$
 $$=O\left(M_i^{-\alfa}+M_i^{-4}r_i^{\alfa+4-2(p_i+1)}\right)=O\left(M_i^{-\alfa}\right).$$
\end{proof}

\begin{lemma}\label{StimaIntegralePsi3MenoTau}
 Under the hypotheses of Lemma \ref{LyapunovSchmidt}
 $$\int_M\left(\Psi_{\bm{p},\bm{\lambda},\bm{a}}^{3-\tau}\right)^{\frac{4}{3}} \lesssim 1$$
\end{lemma}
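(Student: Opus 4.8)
The plan is to reduce the integral to a sum of integrals of single bubbles and then to evaluate each of these by the dilation change of variables which sends $\phi_{p_k,\lambda_k}$ back to the standard Jerison--Lee bubble $U$. Write $s=(3-\tau)\tfrac43=4-\tfrac{4\tau}{3}$; since $\widetilde{\Psi}_{\bm{p},\bm{\lambda},\bm{a}}\in\Sigma_\tau(S)$, the parameter $\tau$ is small, so $s\in[3,4]$ and in particular $s\ge 1$. Because $a_k>0$ and $\phi_{p_k,\lambda_k}\ge 0$ on $\Sigma_\tau(S)$, the elementary convexity inequality $\big(\sum_{k=1}^N b_k\big)^s\le N^{s-1}\sum_{k=1}^N b_k^s$ gives the pointwise bound $\Psi_{\bm{p},\bm{\lambda},\bm{a}}^{s}\le N^{s-1}\sum_{k=1}^N a_k^{s}\phi_{p_k,\lambda_k}^{s}$. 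Since $N$ is fixed and the $a_k$ are bounded in $\Sigma_\tau(S)$, it therefore suffices to prove $\int_M\phi_{p_k,\lambda_k}^{s}\lesssim 1$ for each $k$.

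First I would estimate a single bubble. Using $\chi\le 1$ and passing to the pseudohermitian normal coordinates $z=\Phi_{p_k}(x)$ on $B_\rho(\overline{x}^k)$ (in which $dV_{g_\theta}$ is comparable to the Heisenberg volume $d\cerchio{V}$), one gets $\int_M\phi_{p_k,\lambda_k}^{s}\lesssim\int_{\H^1}U_{\lambda_k}(z)^s\,d\cerchio{V}(z)$, where $U_{\lambda_k}=\lambda_k\,U\circ\delta_{\lambda_k}$ and we have enlarged the domain of integration to all of $\H^1$, which only increases the nonnegative integrand. The dilation $\delta_{\lambda_k}$ has Jacobian $\lambda_k^4$ with respect to $d\cerchio{V}$, so the substitution $w=\delta_{\lambda_k}z$ yields $\int_{\H^1}U_{\lambda_k}^s\,d\cerchio{V}=\lambda_k^{\,s}\lambda_k^{-4}\int_{\H^1}U^s\,d\cerchio{V}=\lambda_k^{\,s-4}\int_{\H^1}U^s$.

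It then remains to bound the two factors. For $\lambda_k\in(A^{-1}\tau^{-1/2},A\tau^{-1/2})$ with $\tau$ small we have $\lambda_k\ge 1$, while $s-4=-\tfrac{4\tau}{3}\le 0$, so $\lambda_k^{\,s-4}\le 1$. For the remaining integral, $U=(t^2+(1+|z|^2)^2)^{-1/2}\le 1$ everywhere, hence $U^s\le U^3$ because $s\ge 3$, and therefore by Lemma \ref{IntegraleU3} we obtain $\int_{\H^1}U^s\le\int_{\H^1}U^3=2\pi$. Combining these gives $\int_M\phi_{p_k,\lambda_k}^s\lesssim 1$, and summing over the finitely many $k$ yields the claim.

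I do not expect a genuine obstacle here, the estimate being of ``bubble mass'' type. The only points requiring care are the bounded distortion between $dV_{g_\theta}$ and $d\cerchio{V}$ in normal coordinates, which affects only the implicit constant, and the observation that the subcritical exponent perturbation keeps $s$ in the range $[3,4]$, so that the clean monotonicity bound $U^s\le U^3$ applies and no separate convergence discussion for $\int_{\H^1}U^s$ is needed.
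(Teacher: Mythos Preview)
Your argument is correct. The paper actually states this lemma without any proof at all---it is presented as a routine computation in the appendix and left to the reader---so there is no ``paper's own proof'' to compare against. Your approach (reduce to single bubbles by convexity, then change variables by $\delta_{\lambda_k}$ to get $\lambda_k^{s-4}\int_{\H^1}U^s$ with $s=4-\tfrac{4\tau}{3}$, and bound both factors using $\lambda_k\gtrsim\tau^{-1/2}\ge 1$ and $U\le 1$) is exactly the standard way to carry out this estimate and is presumably what the author had in mind.
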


\begin{lemma}\label{LemmaB3}
 Under the hypotheses of Lemma \ref{LyapunovSchmidt}
 $$\left(\int_{B_{\rho}(\overline{x}^k)}\left(\phi_{p_k,\lambda_k}^3-\phi_{p_k,\lambda_k}^{3-\tau}\right)^{\frac{4}{3}}\right)^{\frac{3}{4}}= O(\tau|\log\tau|).$$
\end{lemma}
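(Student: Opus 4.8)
The plan is to reduce the estimate to a single scaling computation for the standard bubble, isolating the logarithmic factor that produces the $|\log\tau|$. First I would record the elementary pointwise bound
$$\left|\phi_{p_k,\lambda_k}^3-\phi_{p_k,\lambda_k}^{3-\tau}\right| = \phi_{p_k,\lambda_k}^{3-\tau}\left|\phi_{p_k,\lambda_k}^{\tau}-1\right|\lesssim\tau\,\phi_{p_k,\lambda_k}^{3-\tau}\left|\log\phi_{p_k,\lambda_k}\right|,$$
which follows from $e^x-1\le xe^x$ for $x\ge0$ and $1-e^x\le-x$ for $x\le0$ applied to $x=\tau\log\phi_{p_k,\lambda_k}$, together with the fact that $\phi_{p_k,\lambda_k}^{\tau}$ is uniformly bounded: indeed $\phi_{p_k,\lambda_k}\lesssim\lambda_k$, and since $\lambda_k\sim\tau^{-1/2}$ on $\Sigma_{\tau}(S)$ one has $\lambda_k^{\tau}=e^{\tau\log\lambda_k}=O(1)$. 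Taking the $L^{4/3}$ norm, it then suffices to prove
$$\int_{B_{\rho}(\overline{x}^k)}\phi_{p_k,\lambda_k}^{(3-\tau)\frac43}\left|\log\phi_{p_k,\lambda_k}\right|^{\frac43}\lesssim|\log\tau|^{\frac43},$$
since raising this to the power $3/4$ and multiplying by $\tau$ yields exactly $\tau|\log\tau|$.

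Next I would pass to the rescaled Heisenberg coordinates $y=\delta_{\lambda_k}\Phi_{p_k}(x)$, in which $\phi_{p_k,\lambda_k}(x)=\lambda_k\,U(y)\,\chi(x)$ and $dV=(1+o(1))\lambda_k^{-4}\,dV_y$, the normal-coordinate volume distortion being bounded. The power of $\lambda_k$ produced by the change of variables is $(3-\tau)\tfrac43-4=-\tfrac{4\tau}3$, so the prefactor $\lambda_k^{-4\tau/3}=e^{-(4\tau/3)\log\lambda_k}$ stays bounded. Writing $\log\phi_{p_k,\lambda_k}=\log\lambda_k+\log U(y)$ and using $|a+b|^{4/3}\lesssim|a|^{4/3}+|b|^{4/3}$, the integral splits into one term carrying $|\log\lambda_k|^{4/3}$ and one carrying $|\log U(y)|^{4/3}$.

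The first term is $|\log\lambda_k|^{4/3}\int_{B_{\lambda_k\rho}}U^{(3-\tau)\frac43}\,dV_y$; since $U\sim|y|^{-2}$ the integrand decays like $|y|^{-8+8\tau/3}$, integrable at infinity on $\H^1$ (homogeneous dimension $4$), so the integral is $O(1)$ uniformly in small $\tau$, and as $|\log\lambda_k|\sim\tfrac12|\log\tau|$ this term is $O(|\log\tau|^{4/3})$. The second term, $\int_{\H^1}U^{(3-\tau)\frac43}|\log U|^{4/3}\,dV_y$, is again $O(1)$, because the logarithmic factor $\sim(\log|y|)^{4/3}$ does not spoil integrability at infinity and $\log U$ vanishes where $U\approx1$ near the origin. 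Combining the two gives the displayed bound, hence the lemma.

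The one point requiring care is the transition region $B_{\rho}(\overline{x}^k)\setminus B_{\rho/2}(\overline{x}^k)$, where the cutoff $\chi$ makes $\phi_{p_k,\lambda_k}$ strictly smaller than $\lambda_kU\circ\Phi_{p_k}$ and thus enlarges $\left|\log\phi_{p_k,\lambda_k}\right|$; there, however, $\phi_{p_k,\lambda_k}\lesssim\lambda_k^{-1}$ is small and the function $s\mapsto s^{(3-\tau)4/3}|\log s|^{4/3}$ is increasing for small $s$, so one may still dominate the integrand by the untruncated bubble, whose tail is part of the convergent integral above. I expect this bookkeeping in the tail, together with checking that the $\lambda_k$-power is exactly $\lambda_k^{-4\tau/3}$ rather than something growing, to be the only genuinely delicate part; everything else is the scaling computation.
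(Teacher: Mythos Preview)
Your argument is correct and follows essentially the same route as the paper's: both use the elementary bound $|a^{\tau}-1|\lesssim\tau|\log a|$ (valid once $a^{\tau}$ is uniformly bounded), rescale so that the relevant logarithm is $\log\lambda_k\sim\tfrac12|\log\tau|$, and observe that the remaining integral of $U^{(3-\tau)4/3}$ (with or without a logarithmic weight) over $\H^1$ is $O(1)$. The paper simply rescales first and then splits $\lambda_k^3(1+|x|)^{-6}-\lambda_k^{3-\tau}(1+|x|)^{-6+2\tau}$ into the $\lambda_k$-power change and the $(1+|x|)$-power change, whereas you apply the pointwise bound to $\phi$ directly and split $\log\phi=\log\lambda_k+\log U$ afterwards; your treatment of the cutoff region is more explicit than the paper's.
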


\begin{proof}
 $$\int_{B_{\rho}(\overline{x}^k)}\left(\phi_{p_k,\lambda_k}^3-\phi_{p_k,\lambda_k}^{3-\tau}\right)^{\frac{4}{3}}=$$
 $$=\lambda_k^{-4}\int_{B_{\lambda_k\rho}(\overline{x}^k)}\left(\lambda_k^3(1+|x\cdot\delta_{\lambda_k}p_k^{-1}|)^{-6}-\lambda_k^{3-\tau}(1+|x\cdot\delta_{\lambda_k}p_k^{-1}|)^{-6+2\tau}\right)^{\frac{4}{3}}\lesssim$$
 $$= O\left(\int_{B_{\lambda_k\rho}(\overline{x}^k)}\left((1+|x\cdot\delta_{\lambda_k}p_k^{-1}|)^{-6}-(1+|x\cdot\delta_{\lambda_k}p_k^{-1}|)^{-6+2\tau}\right)^{\frac{4}{3}}\right)+$$
 $$+O\left((\tau|\log\tau|)^{4/3}\int_{B_{\lambda_k\rho}(\overline{x}^k)}\left((1+|x\cdot\delta_{\lambda_k}p_k^{-1}|)^{-6+2\tau}\right)^{\frac{4}{3}}\right)$$
 Since on the domain of integration $|(1+|x\cdot\delta_{\lambda_k}p_k^{-1}|)^{2\tau}- 1|\lesssim\tau|\log\tau|$, the thesis follows.
\end{proof}

\begin{lemma}\label{LemmaDifferenzaSublaplaciano}
 $$\left(\int_{B_{\rho/2}(\overline{x}^k)}|(\cerchio{\Delta}_b-\Delta_b)\phi_{p_k,\lambda_k}|^{4/3}\right)^{3/4}=O(\tau^{1/2}).$$
\end{lemma}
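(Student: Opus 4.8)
The plan is to bound $(\cerchio{\Delta}_b-\Delta_b)\phi_{p_k,\lambda_k}$ pointwise and then compute the resulting $L^{4/3}$ integral by the parabolic rescaling $y=\delta_{\lambda_k}x$. First I would note that on $B_{\rho/2}(\overline{x}^k)$ the cut-off $\chi$ equals $1$, so there $\phi_{p_k,\lambda_k}=U_{\lambda_k}\circ\Phi_{p_k}$ and no derivative of $\chi$ appears; writing $x=\Phi_{p_k}$ for the pseudohermitian normal coordinates centred at $p_k$ (Korányi norm $|x|$) and $\phi=\phi_{p_k,\lambda_k}$, the comparison between $\cerchio{\Delta}_b$ and $\Delta_b$ from Lemma A.5 in \cite{Af} — the same one used in Lemma \ref{StimaDifferenzaSublaplaciano} — gives the pointwise estimate
$$|(\cerchio{\Delta}_b-\Delta_b)\phi| \lesssim |x|(|\phi_{,1}|+|\phi_{,\con{1}}|) + |x|^2(|\phi_{,0}|+|\phi_{,1\con{1}}|+|\phi_{,11}|+|\phi_{,\con{1}\con{1}}|) + |x|^3(|\phi_{,01}|+|\phi_{,0\con{1}}|) + |x|^6|\phi_{,00}|.$$

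Next I would insert the decay of the bubble. From $U_{\lambda_k}=\lambda_k(U\circ\delta_{\lambda_k})$ and the decay of the derivatives of $U$ one gets, for a horizontal derivative of order $j$ followed by $\ell$ copies of $\cerchio{T}$,
$$|\cerchio{\nabla}_H^{\,j}\cerchio{T}^{\ell}\phi| \lesssim \lambda_k^{1+j+2\ell}(1+\lambda_k|x|)^{-2-j-2\ell},$$
so that the four groups above are controlled respectively by $\lambda_k^2|x|(1+\lambda_k|x|)^{-3}$, $\lambda_k^3|x|^2(1+\lambda_k|x|)^{-4}$, $\lambda_k^4|x|^3(1+\lambda_k|x|)^{-5}$ and $\lambda_k^5|x|^6(1+\lambda_k|x|)^{-6}$. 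Setting $y=\delta_{\lambda_k}x$ (so $|x|=\lambda_k^{-1}|y|$ and $dV_x=\lambda_k^{-4}\,dV_y$) the first three profiles become $\lambda_k\,g(|y|)$ with $g$ of the form $|y|^{a}(1+|y|)^{-a-2}$ ($a=1,2,3$), whose $4/3$-th power decays like $|y|^{-8/3}$; integrating over the rescaled domain $\subset B_{c\lambda_k}(0)$ yields $\int |y|^{-8/3}\,dV_y\lesssim\lambda_k^{4/3}$, hence each such term contributes $\lambda_k^{-4}\cdot\lambda_k^{4/3}\cdot\lambda_k^{4/3}=\lambda_k^{-4/3}$ to the integral $\int(\cdots)^{4/3}\,dV$. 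The last profile becomes $\lambda_k^{-1}|y|^6(1+|y|)^{-6}$, bounded, so its contribution is $\lambda_k^{-4}\cdot\lambda_k^{-4/3}\cdot\Vol(B_{c\lambda_k})\lesssim\lambda_k^{-4/3}$ as well. Taking the $3/4$-th power gives $O(\lambda_k^{-1})$, and since $\lambda_k\sim\tau^{-1/2}$ on $\Sigma_\tau(S)$ this is the claimed $O(\tau^{1/2})$.

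The main point to watch is that these integrals are of borderline type: after rescaling the integrand behaves like $|y|^{-8/3}$ at infinity, so $\int |y|^{-8/3}\,dV_y$ over $\H^1$ diverges and the decisive contribution comes from the outer region $|y|\sim\lambda_k$, i.e. from the original scale $|x|\sim\rho$ where the bubble has already decayed to its tail; it is precisely the truncation at the fixed radius $\rho$ that produces the exponent $\lambda_k^{4/3}$ and hence the sharp power $\tau^{1/2}$. I would also check that the displacement $d(p_k,\overline{x}^k)\lesssim\tau^{1/4}$, being much larger than the concentration scale $\lambda_k^{-1}\sim\tau^{1/2}$ but still smaller than $\rho$, does not affect these estimates: the ball $B_{\rho/2}(\overline{x}^k)$ still contains a fixed-size neighbourhood of $p_k$, so in the rescaled variable the domain contains $B_{c\lambda_k}(0)$ and the computation above is unaffected.
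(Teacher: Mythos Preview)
Your proof is correct and follows the same strategy as the paper: bound $(\cerchio{\Delta}_b-\Delta_b)\phi_{p_k,\lambda_k}$ pointwise via the expansion of Lemma~A.5 in \cite{Af}, insert the decay of the bubble derivatives, rescale by $\delta_{\lambda_k}$, and integrate. The only difference is that the paper invokes the estimate with the higher powers $|x|^3,|x|^4,|x|^5,|x|^{10}$ (available because $\theta$ is in CR normal coordinates near $\overline{x}^k$), whereas you use the weaker pseudohermitian version $|x|,|x|^2,|x|^3,|x|^6$ quoted in Lemma~\ref{StimaDifferenzaSublaplaciano}; your observation that the resulting integrals are borderline (the $|y|^{-8/3}$ tail picks up exactly $\lambda_k^{4/3}$ from the truncation at $|y|\sim\lambda_k$) is what makes the weaker input still deliver the sharp $O(\lambda_k^{-1})=O(\tau^{1/2})$.
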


\begin{proof}
 $$\left(\int_{B_{\rho/2}(\overline{x}^k)}|(\cerchio{\Delta}_b-\Delta_b)\phi_{p_k,\lambda_k}|^{4/3}\right)^{3/4}=$$
 $$+ O\left(\left[\int_{B_{\rho/2}(\overline{x}^k)}\left(|x|^3|(\phi_{p_k,\lambda_k})_{,1}|+ |x|^3|(\phi_{p_k,\lambda_k})_{,\con{1}}| + |x|^4|(\phi_{p_k,\lambda_k})_{,0}| + |x|^4|(\phi_{p_k,\lambda_k})_{,1\con{1}}| +|x|^4|(\phi_{p_k,\lambda_k})_{,11}| +\right.\right.\right.$$
 $$\left.\left.\left.+|x|^4|(\phi_{p_k,\lambda_k})_{,\con{1}\con{1}}|+ |x|^5|(\phi_{p_k,\lambda_k})_{01}| + |x|^5|(\phi_{p_k,\lambda_k})_{,0\con{1}}| +|x|^{10}|(\phi_{p_k,\lambda_k})_{,00}|\right)^{\frac{4}{3}}\right]^{\frac{3}{4}}\right)=$$
 $$= O\left(\left[\lambda_k^{-4}\int_{B_{\rho\lambda_i/2}(\overline{x}^k)}\left(\lambda_k^{-3}|x|^3\lambda_k\lambda_k(1+|x\cdot\delta_{\lambda_k}p_k^{-1}|)^{-3} + \lambda_k^{-10}|x|^{10}\lambda_k\lambda_k^4(1+|x\cdot\delta_{\lambda_k}p_k^{-1}|)^{-6} \right)^{\frac{4}{3}}\right]^{\frac{3}{4}}\right)=$$
 $$= O\left(\left[\lambda_k^{-4}\int_{B_{\rho\lambda_i/2}(\overline{x}^k)}\left(\lambda_k^{-1} + \lambda_k^{-5}(1+|x|)^2\right)^{\frac{4}{3}}\right]^{\frac{3}{4}}\right)=$$
 $$= O\left(\sum_{k=1}^N\lambda_k^{-3}\lambda_k^{-1}\left[\Vol(B_{\rho\lambda_i/2}(\overline{x}^k))\right]^{\frac{3}{4}}\right)= O(\tau^{1/2}).$$
\end{proof}


\textsc{Claudio Afeltra, Department of Mathematics, University of Trento, Via Sommarive 14, 38123 Povo (Trento), Italy}
 
 \textit{Email address}:  \texttt{claudio.afeltra@unitn.it}
 
\end{document}